\newtheorem{thm}{Theorem}[section]
\newtheorem{cor}[thm]{Corollary}
\newtheorem{conj.}[thm]{Conjecture}
\newtheorem{lem}[thm]{Lemma}
\newtheorem{prop}[thm]{Proposition}
\theoremstyle{definition}
\newtheorem{defn}[thm]{Definition}
\theoremstyle{remark}
\newtheorem{rem}[thm]{Remark}
\numberwithin{equation}{section}
\newcommand{\norm}[1]{\left\Vert#1\right\Vert}
\newcommand{\RR}{\mathbb R}
\newcommand{\h}{\mathcal{H}}
\newcommand{\g}{\mathcal{G}}
\newcommand{\paff}{\pi_{\mathrm{aff}}}
\newcommand{\range}[1]{\operatorname{ran}\left( #1 \right)} 
\newcommand{\identity}[1]{\mathsf{id}_{ #1 }}
\newcommand{\kernel}[1]{\ker\left( #1 \right)}
\definecolor{darkviolet}{rgb}{0.58,0,0.83} 
\begin{document}

\title[Continuous frames in tensor product Hilbert spaces]{Continuous frames in tensor product Hilbert spaces, localization operators and density
operators}%
\author[P. Balazs, N. Teofanov]{P. Balazs$^1$, N. Teofanov$^{2}$,}
\address{$^1$ Acoustics Research Institute, Austrian Academy of Sciences, Wohllebengasse 12-14, 1040 Wien, Austria.}
\email{peter.balazs@oeaw.ac.at}
\address{$^{2}$ Department of Mathematics and Informatics, University of Novi Sad Faculty of Sciences, Trg D. Obradovi\'ca 4, Novi Sad, Serbia.}
\email{nenad.teofanov@dmi.uns.ac.rs}

\subjclass[2000]{Primary 42C15; Secondary 42C40, 47A58}
\keywords{Continuous frames, Dual frames, Tensor product, Bilinear continuous frame multipliers, Localization operators, Patrial traces, Density operators and matrices, Quantum systems.}

\begin{abstract}
Continuous frames and tensor products are important topics in theoretical physics. This paper combines those concepts.
We derive fundamental properties of continuous frames for tensor product of Hilbert spaces.
This includes, for example, the consistency property,  i.e. preservation of the frame property under the tensor product,
and the description of the canonical dual tensors by those on the Hilbert space level.
We show the full characterization of all dual systems for a given continuous frame, a result  interesting by itself, and
apply this to dual tensor frames. Furthermore, we discuss the existence on non-simple tensor product (dual) frames.
Continuous frame multipliers and their  Schatten class properties are considered in the context of tensor products.
In particular, we give sufficient conditions for obtaining partial trace multipliers of the same form, which is illustrated with examples related to short-time Fourier transform and wavelet localization operators. As an application, we offer an interpretation of a class of tensor product continuous frame multipliers as density operators for bipartite quantum states, and show how their structure can be restricted to the corresponding partial traces.
\end{abstract}

\maketitle

\section{Introduction}

Continuous frames extend the concept of frames when the indices are related to some measurable space, see \cite{Ali1, Ra, Gb, Gk}.
Apart from expected similarities, this extension pointed out various differences between the ``discrete`` and ``continuous`` theories.
For example, continuous frames need not be norm bounded, and they may describe the states of quantum systems in a neighborhood
of a point in phase space $\mathbb{R}^{2d} $, which is a more realistic situation than the corresponding discrete case related
to some  lattice in $\mathbb{R}^{2d} $, cf. \cite{deGosson2020}.

On the other hand the tensor product of Hilbert spaces is a very important topic in mathematics \cite{LOAN200085} and theoretical physics \cite{Caban_2005}. Here we combine those two approaches.
We introduce the notion of continuous frames (and Bessel mappings) for tensor products of Hilbert spaces $\h = \h_1 \otimes \h_2 $ with respect to a
(tensor product) measure space $(X,\mu)$. When the measure $\mu $ is chosen to be the counting measure, the main properties of tensor products of
(discrete) frames considered in \cite{bourou, Garcia, KhoAsg, WangLi} are recovered.

We show the expected consistence property, i.e. that the continuous frame/Bessel mapping  condition is preserved by the tensor product, Theorem
\ref{thm:main}.  To tackle the issue of representing vectors in tensor product Hilbert spaces, different systems can be used for analysis and
synthesis, which gives rise to the notion of dual pairs of continuous frames. We study the corresponding operators,
and give a representation of canonical dual frames for the tensor product continuous frames. 
In addition, we briefly discuss the existence of non-simple tensor product (dual) frames. For that result, a full characterization of all dual continuous frames is needed. We prove the generalization of the well-known result for (discrete) frames \cite[Lemma 6.3.6]{ole} to the continuous frame setting, solving an open question. We use the powerful technique of reproducing kernel Hilbert spaces in these investigations, see Theorem \ref{olecontinuous}, and derive the corresponding property for tensor product continuous frames.

\par

Let us recall that tensor product Hilbert spaces are important in many different contexts. For example, as noted in \cite{xxlgrospeck19}, {\em ``the theory of tensor products is at the heart
of kernel theorems for operators``}. In fact, tensor product of two Hilbert spaces can be introduced in terms of Hilbert-Schmidt operators which, in turn, can be identified with their kernels.  In this paper we focus our attention to other aspects of tensor products, and the
approach based on kernel theorems will be given in a separate contribution.

\par

For example, in Section \ref{sec:multipliers} we study the tensor product continuous frame multipliers and their compactness properties,
thus extending results from \cite{BBR} to the tensor product setting. In addition,
we recall the partial trace theorem which is an important tool related to applications of our results to quantum systems.
As an illustration, in  Section \ref{sec:STFTwavmult0} we consider particular examples of continuous frame multipliers
in the form of familiar localization operators in the context of the short-time Fourier transform and wavelet multipliers.
Localization operators are used in the context of quantization \cite{Berezin71}, in signal analysis \cite{da88},
or as an approximation of pseudodifferential operators, cf. \cite{CRodino} and the references given there.
We recover some well-known results, but also point out some Schatten class results related to the wavelet and mixed type multipliers that so far seems to remain unconsidered.

\par

Specific instances of our general theory, which is one of the main motivations for our study
could be related to the states of quantum systems. More precisely, we
propose the interpretation of a family of trace class operators as density operators also called density matrices
for composite (bipartite) quantum systems. Recently, de Gosson in \cite{deGosson2020, deGosson2021} considered Toeplitz density operators by using the approach which is closely related
to the short-time Fourier transform multipliers of Section \ref{sec:STFTwavmult0}.
The main feature of operators considered in Section \ref{sec:quantum} is that their partial traces
(or reduced density operators) are operators of the same form.
Thus we propose the study of  bilinear localization
operators which, in principle, could be used to describe the state of subsystem in a prescribed region of the phase space.
This is analogous to the use of localization operators in extracting an information about a signal
in a specific region of time-frequency plane.

In our opinion the results from Sections  \ref{sec:STFTwavmult0} and  \ref{sec:quantum},
open the perspective of using mathematical tools developed in Sections \ref{sec:contframes} and
\ref{sec:multipliers} in the future study of bipartite quantum systems and their subsystems.
For example, Theorem \ref{thm:density} provides a description of  the separable state of a composite system, and a
partial affirmative answer to the question of  de Gosson  \cite[Section 5]{deGosson2020} which can be roughly rephrased as follows:
can the structure of a density operator be appropriately restricted to its partial traces?

\par

\section{Preliminaries} \label{sec:preliminaries}

For the reader's convenience in this section we collect some basic facts from operator theory  and tensor products of Hilbert spaces  which will be
used in the sequel. We refer to \cite{conw1, Fol, 
 Mu} for details.

\subsection{Operator theory}

By $\h $ we denote a complex Hilbert space with the inner product $\langle x, y\rangle$
(linear in the first and conjugate linear in the second coordinate)
and norm $\|x\| = \sqrt{\langle x, x  \rangle}$, $x,y \in \h$. In the sequel we consider separable Hilbert spaces.
A map $\Psi:\h \times \h\rightarrow \mathbb{C}$ is a sesquilinear form if it is linear in
the first variable and conjugate-linear in the second. The sesquilinear form
is bounded if there exists a constant $C>0$ such that $\left| \Psi(x,y) \right| \le C \cdot \| x \| \| y \|$,
$x,y \in \h$. The smallest, optimal, such constant is called the bound of $\Psi$ denoted by $\|\Psi\|$. There is a unique operator $O$ on $\h$ such that
\begin{equation} \label{murphy}
\Psi(x,y)=\langle O(x),y\rangle \quad x,y\in \h,
\end{equation}
and $\|O\|=\|\Psi\|$.

\par

A bounded operator $T: \h \rightarrow \h$ is positive (respectively
non-negative), if $\langle Tx,x\rangle>0$ for all $x\neq0$ (respectively $\langle Tx,x\rangle\geq0$ for all $x\in\h$).


A linear operator $T$ from the Banach space $X$ into the Banach space $Y$ is
compact if the image 
 of the closed unit ball in $X$ is a relatively compact subset of $Y$, or, equivalently, if the image of any bounded
sequence contains a convergent subsequence.
If  $T$ is a compact operator on Hilbert space $\h$ and if $T^{\ast}$ is the adjoint of $T$ (i.e. $\langle Tx, y \rangle = \langle x, T^{\ast}y \rangle$, $ \forall x,y \in \h$) then  the eigenvalues of
the unique non-negative and compact operator $S$ such that $S^2 = T^{\ast}T$  are called the
singular values of $T$.
An operator  $T $ belongs to the Schatten class $\mathcal{S}_p(\h)$,  $1 \leq p < \infty$,
if the sequence of its singular values $(s_n)$ belongs to $\ell^p$. In particular,
$\mathcal{S}_1(\h)$ consists of the trace class operators, and  $\mathcal{S}_2(\h)$ is the class of Hilbert-Schmidt operators (see also below),
and $\mathcal{S}_p(\h) \subset \mathcal{S}_q(\h)$, when $ 1\leq p \leq q \leq \infty,$ where $ S_\infty (\h) = \mathcal{B}(\h) $  denotes the set of all bounded
linear operators on $\h$.

\par

Let $\h_1$ and $\h_2$ be separable Hilbert spaces.
The set $\mathcal{B}(\h_2,\h_1)$ of all bounded linear operators from $\h_2$ to $\h_1$
is a Banach space with the usual operator norm $\|T\|=\sup_{\|x\|=1}\|Tx\|$, and $GL(\h_2,\h_1)$ denotes  the set
of all bounded linear operators  from $\h_2$ to $\h_1$ with bounded inverse. If $\h_1 = \h_2 = \h$, we  write $\mathcal{B}(\h)$ and $GL(\h)$ for
short.

If $ T \in \mathcal{B}(\h_2,\h_1)$ and
$$
\| T \| ^2 _{\h S} := \sum_{n=1} ^\infty \| T e_n \|^2 _{\h_1} < \infty
$$
for some orthonormal basis (ONB)  $ (e_n ) $ in $ \h_2 $, then $T$ is called a {\em Hilbert-Schmidt} ($ \h S $) operator from $\h_2$ to $\h_1$.
We denote the class of  Hilbert-Schmidt  operators by  $\h S (\h_2,\h_1 )$.
If $\h_1 = \h_2 = \h$, then $\h S (\h,\h ) = \mathcal{S}_2(\h)$.
$\h S (\h_2,\h_1 )$ is a Hilbert space (of compact operators) with the inner product
$$
\langle S, T \rangle_{\h S} = \sum_{n=1} ^\infty  \langle S e_n, T e_n \rangle_{\h_1}.
$$

If  $ x\in \h_1$ and $y \in \h_2 $, then their {\em tensor product} $x \otimes y : \h_2 \rightarrow \h_1$ is defined by
\begin{equation} \label{outten1}
(x \otimes y ) h = \langle h, y \rangle x, \;\;\; h \in \h_2,
\end{equation}
belonging to $ \h S (\h_2,\h_1 )$.

\par

For $ P \in \mathcal{B}(\h_2)$ and $ Q \in \mathcal{B}(\h_1)$ we define the {\em tensor product of operators} $ Q \otimes P :
\mathcal{B}(\h_2, \h_1) \rightarrow \mathcal{B}(\h_2, \h_1) $ by
$ (Q \otimes P) T = Q \circ T \circ P^*.$ It is invertible if and only if $P$ and $Q $ are invertible, and
$ (Q \otimes P)^{-1} =  Q^{-1} \otimes P^{-1} .$

\subsection{Tensor product of Hilbert spaces}

Let $\h_1$ and $\h_2$ be separable Hilbert spaces.
Equipping the algebraic tensor product with the (extension of) the inner product
\begin{equation}
\langle x_1 \otimes y_1, x_2 \otimes y_2 \rangle_{\otimes} =
\langle x_1, x_2 \rangle_{\h_1} \langle  y_1,  y_2 \rangle_{\h_2}, \;\;\; x_1, x_2 \in \h_1,\; y_1, y_2 \in \h_2, \label{eq:tens innprod1}
\end{equation}
makes it into a Hilbert space, denoted by $\h_1 \otimes \h_2$, and
$ \| \cdot \|_{\otimes} = \langle \cdot, \cdot \rangle_{\otimes}.$

The space  $\h_1 \otimes \h_2$ is unitary isomorphic to the class of Hilbert-Schmidt operators  $\h S (\h_2,\h_1 )$.
The unitary operator maps $( x_1 \otimes y_1)$ onto the operator given by \eqref{outten1}
cf. \cite{Heil}.

\par

Let us collect basic properties of tensor products given in the following lemma.

\begin{lem}  \label{lm:tensorprodprop}
Let  $\h_1$ and $\h_2$ be separable Hilbert spaces and $\h_1 \otimes \h_2$ their tensor product. Then we have:
\begin{itemize}
\item[a)] $ \| u \otimes v \| = \| u \| \| v\|,$ $u \in \h_1,$  $v \in \h_2.$
\item[b)] if $ S \in \mathcal{B} (\h_1) $ and $ T \in \mathcal{B} (\h_2), $ then
$ \| S \otimes T \| = \|   S \| \| T\|,$ and
$$
(S \otimes T) (u \otimes v) = Su \otimes Tv, \qquad u \in \h_1,v \in \h_2.
$$
\item[c)] $\h_1 \otimes \h_2 = \overline{\text{span}} \; \{  u \otimes v, \;:\;  u \in \h_1, v \in \h_2 \}$,
i.e. $\h_1 \otimes \h_2$ is the
    closure of the set of all
finite linear combinations of elements of the form $ u \otimes v$, $u \in \h_1$, $v \in \h_2$.
\item[d)] the tensor product of two ONBs is an ONB in the tensor product space.
\item[e)] (Schmidt decomposition) for every $ x \in \h_1 \otimes \h_2$ there are non-negative numbers $ c_n$ and ONB $ e_n \in \h_1$  and
$ f_n \in \h_2,$  such that
$$
 x_ = \sum_{n=1} ^\infty c_n (e_n \otimes f_n), \qquad \| x\| = \sum_{n=1} ^\infty c_n ^2.
$$
\end{itemize}
\end{lem}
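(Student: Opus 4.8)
The plan is to establish each of the five claims (a)--(e) separately, leaning on the definition of the tensor inner product in \eqref{eq:tens innprod1} and the unitary identification $\h_1 \otimes \h_2 \cong \h S(\h_2,\h_1)$ established just above. For part (a), I would simply compute $\|u \otimes v\|_\otimes^2 = \langle u \otimes v, u \otimes v\rangle_\otimes = \langle u,u\rangle_{\h_1}\langle v,v\rangle_{\h_2} = \|u\|^2\|v\|^2$ directly from \eqref{eq:tens innprod1}, then take square roots. This is the base case and requires no obstacle.

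For part (b), the action $(S\otimes T)(u\otimes v) = Su \otimes Tv$ should be verified first on elementary tensors and then extended by linearity and continuity using part (c); the identity $\langle (S\otimes T)(u_1\otimes v_1), u_2\otimes v_2\rangle_\otimes = \langle Su_1, u_2\rangle_{\h_1}\langle Tv_1, v_2\rangle_{\h_2}$ pins down the operator uniquely on a dense set. For the norm identity $\|S\otimes T\| = \|S\|\|T\|$, the inequality $\|S\otimes T\|\le \|S\|\|T\|$ follows from part (a) applied to $(S\otimes T)x$ for $x$ a sum of elementary tensors (using orthogonality of the summands after a Schmidt-type decomposition), while the reverse inequality $\|S\otimes T\|\ge \|S\|\|T\|$ comes from testing on elementary tensors $u\otimes v$ with $\|Su\|$ and $\|Tv\|$ nearly maximal and invoking part (a) again. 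Part (c) is essentially the definition of the completed tensor product as the closure of the algebraic tensor product, so I would cite \cite{Heil} and the construction preceding the lemma rather than reprove it.

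For part (d), given ONBs $(e_n)$ of $\h_1$ and $(f_m)$ of $\h_2$, orthonormality of $\{e_n\otimes f_m\}$ is immediate from \eqref{eq:tens innprod1}, since $\langle e_n\otimes f_m, e_{n'}\otimes f_{m'}\rangle_\otimes = \delta_{nn'}\delta_{mm'}$; completeness then follows from part (c) together with the fact that any elementary tensor $u\otimes v$ can be expanded in the double series $\sum_{n,m}\langle u,e_n\rangle\langle v,f_m\rangle\, e_n\otimes f_m$. Part (e), the Schmidt decomposition, is the one step I expect to carry real content: the cleanest route is to transport $x$ under the unitary isomorphism to a Hilbert--Schmidt operator $A_x \in \h S(\h_2,\h_1)$, apply the singular value decomposition of a compact operator (whose existence is guaranteed by the spectral theorem for the non-negative compact operator $\sqrt{A_x^* A_x}$, available since Hilbert--Schmidt operators are compact), and read off $A_x = \sum_n c_n\, e_n \otimes f_n$ in the outer-product notation of \eqref{outten1}, which pulls back to the desired expansion of $x$.

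The main obstacle is part (e): one must take care that the orthonormal systems $(e_n)$ and $(f_n)$ delivered by the SVD, and the transport of the outer-product representation \eqref{outten1} back through the unitary identification, genuinely reproduce the elementary-tensor expansion claimed. I would also note the norm identity there should read $\|x\|^2 = \sum_n c_n^2$ (the statement appears to have a typo), which is exactly the Hilbert--Schmidt norm of $A_x$ expressed through its singular values $c_n$; everything else reduces to the orthonormality from part (d) and continuity of the inner product.
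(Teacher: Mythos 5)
The paper offers no proof of this lemma at all: it dismisses parts a)--d) as folklore with citations to \cite{Fol,gaal,Hall} and refers to \cite{BB} for the Schmidt decomposition in e). Your proposal therefore supplies more than the paper does, and most of it is the standard argument found in those references: a) is the one-line computation from \eqref{eq:tens innprod1}, c) is essentially definitional, d) follows from orthonormality plus the double expansion, and your route for e) --- transporting $x$ to a Hilbert--Schmidt operator via the unitary identification, taking the singular value decomposition of that compact operator, and pulling the outer-product expansion \eqref{outten1} back --- is precisely the textbook proof used in the cited literature. Your remark that the norm identity in e) should read $\|x\|^2 = \sum_n c_n^2$ is also correct; the statement as printed has a typo.

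The one step that would fail as written is the inequality $\|S \otimes T\| \le \|S\|\,\|T\|$ in b). You propose to apply part a) to $(S\otimes T)x$ ``using orthogonality of the summands after a Schmidt-type decomposition,'' but orthogonality is destroyed by the operator: if $x = \sum_n c_n\, e_n \otimes f_n$ is a Schmidt decomposition, then $(S\otimes T)x = \sum_n c_n\, Se_n \otimes Tf_n$, and the vectors $Se_n$ (respectively $Tf_n$) are in general no longer orthogonal, so Pythagoras gives nothing and the triangle inequality only yields a bound by $\|S\|\,\|T\|\sum_n c_n$, which is not controlled by $\|x\| = \left(\sum_n c_n^2\right)^{1/2}$. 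The standard repair is to factor $S \otimes T = (S \otimes I)(I \otimes T)$ and treat each factor separately: expanding $x = \sum_m u_m \otimes f_m$ along an ONB $(f_m)$ of $\h_2$, the summands of $(S\otimes I)x = \sum_m Su_m \otimes f_m$ \emph{are} mutually orthogonal, because orthogonality lives in the untouched slot; hence $\|(S\otimes I)x\|^2 = \sum_m \|Su_m\|^2 \le \|S\|^2\|x\|^2$, and similarly for $I \otimes T$. This repair also removes the awkward forward dependence of b) on the later part e).
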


\begin{proof} The proof a)--d) is folklore, see e.g. \cite{Fol,gaal,Hall}. For the proof of Schmidt decomposition e) we refer to \cite{BB}.
\end{proof}



\section{Frames in tensor products of Hilbert spaces} \label{sec:contframes}

In this section we derive fundamental properties of continuous frames for tensor product of Hilbert spaces.
%
%
%
%

The usual definition of frames use discrete index sets \cite{ole}, one can also give a definiton using continuous ones \cite{Ali1, Ali2, BBR}. As an introduction we transfer the basic definitions directly for the case used in this manuscript.

\subsection{Continuous Frames in Tensor Product Hilbert spaces}

\begin{defn} \label{De:frame-Bessel}
Let $\h$ be the tensor product $\h = \h_1 \otimes \h_2 $ of separable complex Hilbert spaces, and $(X, \mu) = ( X_1 \times X_2, \mu_1 \otimes
\mu_2)$ be the product of measure spaces with $\sigma-$finite positive measures $\mu_1, \mu_2.$
The mapping $F :X\to\h$ is called a \emph{continuous frame for the tensor product Hilbert space} $\h$ with respect to $(X ,\mu)$, if
\begin{enumerate}
\item  $F$ is weakly-measurable, i.e., for all $\vec{f} \in \h$,
$$
x = (x_1, x_2) \to
\langle \vec{f}, F({x})\rangle
$$
is a measurable function on $X$; \\
\item  there exist constants  $A>0$ and $B<\infty$ such that
\begin{equation}
\label{deframe}
A\|\vec{f} \|^{2}  \leq \int_{X}|\langle \vec{f},F({x})\rangle|^{2}\,d\mu(x)
\leq B\|\vec{f}\|^{2}, \qquad \forall \vec{f}\in \h.
\end{equation}
\end{enumerate}

The constants $A$ and $B$ are called the lower and the upper \emph{continuous frame bound}, respectively.
If $A=B$, then $F$ is called a \emph{tight} continuous frame, if $A = B = 1$ a {\em Parseval} frame.

The mapping $F$ is called the {\em Bessel mapping} if only the second inequality in
(\ref{deframe}) is considered. In this case, $B$ is called the \emph{Bessel constant} or the \emph{Bessel bound}.\footnote{Please be aware, that this concept of Bessel mappings does not coincide with Bessel functions. }
\end{defn}

To each continuous frame we define the frame related operators as follows.

Let $(X, \mu) $ and $\h$ be as in Definition \ref{De:frame-Bessel}, and let
$ L^{2}(X, \mu)$ be the space of square-integrable functions on
$(X, \mu) $. The operator $T_{F}:L^{2}(X, \mu)\to\h$ defined by
\begin{equation} \label{eq:synthesisopr}
T_{F}\vec{\varphi} = \int_{X} \vec{\varphi}(x) F(x) \,d\mu(x) =  \int_{X_1} \int_{X_2} \vec{\varphi}(x_1, x_2) F(x_1, x_2)
\,d\mu_1 (x_1) \,d\mu_2 (x_2)
\end{equation}
is called the \textit{synthesis operator}, and the operator
$ T_{F}^{*}: \h \to L^{2}(X, \mu),$  given by
\begin{equation} \label{eq:adjointsynthesisopr}
 (T_{F}^{*}\vec{f})(x)=\langle \vec{f}, F(x)\rangle,\quad  x\in X
\end{equation}
is called the \textit{analysis operator} of $F$.

The \textit{continuous frame operator}  $S_F$  of $F$ is given by $S_{F}=T_{F}T_{F}^{*}$.


\begin{rem} \label{rem:Riesz bases}
In discrete frame theory, it is of interest to consider Riesz bases.
It does not make sense to address this question in the
context of continuous frames, since all continuous Riesz bases are actually discrete, cf. \cite{spexxl16,Ra,JAKOBSEN2016229}.
\end{rem}


The first inequality in (\ref{deframe}), shows that $F$ is complete, i.e.,$$\overline{\textrm{span}}\{F({x})\}_{x\in X}=\h,$$
where we have
$ \displaystyle
\overline{\textrm{span}}\{F({x})\}_{x\in X} := \{ f \in \h \;\; | \;\; \mu  \left( \left\{ x \left|  \langle f, F(x) \rangle \not= 0 \right\} \right. \right)  \neq 0 \}
$. In contrast to discrete setting, in the continuous setting one has to be a bit more careful with
this definition due to the null sets in $X$, cf. \cite{xxlosg21}.

%
%

The next result shows that the continuous frame condition is preserved by the tensor product, generalizing the result for discrete frames.

\begin{thm} \label{thm:main}
Let $\h_1$ and $\h_2$ be separable Hilbert spaces,  $\h = \h_1 \otimes \h_2 $, and let
$(X, \mu) = ( X_1 \times X_2, \mu_1 \otimes \mu_2)$ be the product of measure spaces with $\sigma-$finite positive measures $\mu_1, \mu_2.$
The mapping $F = F_1 \otimes F_2 :X\to\h$ is  a continuous frame for $\h$ with respect to $(X ,\mu)$ if and only if
$ F_1 $ is a continuous  frame for $\h_1$ with respect to $(X_1, \mu_1) $, and
$F_2 $ is a continuous  frame for $\h_2 $ with respect to   $(X_2, \mu_2) $.

Furthermore, if  $F = F_1 \otimes F_2 $ is  a continuous frame for $\h$ with frame bounds $A$ and $B$,
then the continuous frame bounds for $F_1$ can be chosen as
$ A_1 = A/  C_{F_2} $ and $ B_1 = B/  D_{F_2} $, where
\begin{equation} \label{eq:lowerboundF1}
C_{F_2} = \inf_{ \| g\|_{\h_2} = 1}
\int_{X_2} |\langle g , F_2(x_2) \rangle|^{2}\,  d\mu_2 (x_2),
\end{equation}
\begin{equation}  \label{eq:upperboundF1}
D_{F_2} = \sup_{ \| g\|_{\h_2} = 1}
\int_{X_2} |\langle g , F_2(x_2) \rangle|^{2}\,  d\mu_2 (x_2),
\end{equation}
and the continuous frame bounds for $F_2$ can be chosen as
$ A_2 = A/ C_{F_1}$ and $ B_2 = B/ D_{F_1}$, where
\begin{equation} \label{eq:lowerboundF2}
C_{F_1} =  \inf_{ \| f\|_{\h_1} = 1} \int_{X_1} |\langle f , F_1(x_1) \rangle|^{2}\,  d\mu_1 (x_1) ,
\end{equation}
\begin{equation}  \label{eq:upperboundF2}
D_{F_1} = \sup_{ \| f\|_{\h_1} = 1}
\int_{X_1} |\langle f , F_1(x_1) \rangle|^{2}\,  d\mu_1 (x_1).
\end{equation}

Vice versa, if $ F_j $ is a continuous  frame for $\h_j$ with the frame bounds $A_j$ and $B_j$, $ j = 1,2$,
then the frame bounds for $ F = F_1 \otimes F_2 $ can be chosen as $ A= A_1 A_2 $ and $B= B_1 B_2$.
\end{thm}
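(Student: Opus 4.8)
The proof rests on one factorization identity. Writing
\[
P_1(f) := \int_{X_1} \abs{\langle f, F_1(x_1)\rangle}^2 \, d\mu_1(x_1), \qquad P_2(g) := \int_{X_2} \abs{\langle g, F_2(x_2)\rangle}^2 \, d\mu_2(x_2),
\]
the inner product rule \eqref{eq:tens innprod1} on a simple tensor gives $\langle f\otimes g, F_1(x_1)\otimes F_2(x_2)\rangle = \langle f, F_1(x_1)\rangle\langle g, F_2(x_2)\rangle$, so that Tonelli's theorem for $\mu = \mu_1\otimes\mu_2$ (the integrand is nonnegative) yields
\[
\int_X \abs{\langle f\otimes g, F(x)\rangle}^2 \, d\mu(x) = P_1(f)\, P_2(g), \qquad f\in\h_1,\ g\in\h_2.
\]
The whole theorem is built from this: the forward (\emph{only if}) direction needs it only on simple tensors, since the frame property of $F_j$ is a statement about $P_j$ on unit vectors, whereas the converse must control $\int_X \abs{\langle \vec f, F(x)\rangle}^2\,d\mu$ for \emph{all} $\vec f\in\h$, which I would handle at the operator level.

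For the forward direction, assume $F$ is a frame with bounds $A,B$. Restricting \eqref{deframe} to unit simple tensors $\vec f=f\otimes g$ (for which $\norm{\vec f}=1$ by Lemma \ref{lm:tensorprodprop}a)) and using the factorization gives $A\le P_1(f)P_2(g)\le B$ whenever $\norm f_{\h_1}=\norm g_{\h_2}=1$. From the upper estimate, any $g$ with $P_2(g)>0$ forces each $P_1(f)$ finite, and optimizing over $g$ gives $P_1(f)\le B/D_{F_2}$ with $D_{F_2}$ from \eqref{eq:upperboundF1}. The lower estimate then yields $P_2(g)\ge A/P_1(f)\ge A\,D_{F_2}/B>0$ uniformly, so $C_{F_2}>0$ in \eqref{eq:lowerboundF1}, and optimizing the lower estimate over $g$ gives $P_1(f)\ge A/C_{F_2}$. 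Thus $F_1$ has bounds $A_1=A/C_{F_2}$, $B_1=B/D_{F_2}$, and $F_2$ follows symmetrically. Weak measurability of $F_1$ is the only non-algebraic point: measurability of $(x_1,x_2)\mapsto\langle f,F_1(x_1)\rangle\langle g,F_2(x_2)\rangle$ on the product space, plus a Fubini slicing argument, yields a $\mu_1$-measurable slice $x_1\mapsto\langle f,F_1(x_1)\rangle$ after dividing by $\langle g,F_2(x_2^*)\rangle$ at a point $x_2^*$ off the exceptional null set where this is nonzero; such $x_2^*$ exists precisely because $P_2(g)>0$.

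For the converse, assume each $F_j$ is a frame on $\h_j$ with bounds $A_j,B_j$. To reach every $\vec f\in\h$ I would pass to the analysis operators: on simple tensors one checks $T_F^*=T_{F_1}^*\otimes T_{F_2}^*$ under the identification $L^2(X_1,\mu_1)\otimes L^2(X_2,\mu_2)\cong L^2(X,\mu)$, and this extends by boundedness to all of $\h$ since $\norm{S\otimes T}=\norm S\norm T$ by Lemma \ref{lm:tensorprodprop}b). Hence $S_F=T_FT_F^*=S_{F_1}\otimes S_{F_2}$. As $A_jI\le S_{F_j}\le B_jI$, the decomposition $S_{F_1}\otimes S_{F_2}-A_1A_2I=(S_{F_1}-A_1I)\otimes S_{F_2}+A_1I\otimes(S_{F_2}-A_2I)$ exhibits it as a sum of tensor products of positive operators, hence positive, and the companion identity for $B_1B_2I-S_{F_1}\otimes S_{F_2}$ gives the upper bound. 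Evaluating $\langle S_F\vec f,\vec f\rangle=\int_X\abs{\langle\vec f,F(x)\rangle}^2\,d\mu$ then delivers the bounds $A_1A_2,\,B_1B_2$. Weak measurability of $F$ follows by applying the Schmidt decomposition (Lemma \ref{lm:tensorprodprop}e)) to $\vec f$ and noting each summand is a product of measurable functions.

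The routine parts are the factorization and the \emph{if} direction via the operator identity; the delicate points, which I would treat with care, both lie in the \emph{only if} direction: disentangling the single product bound $A\le P_1(f)P_2(g)\le B$ into individual \emph{uniform} and \emph{non-degenerate} bounds, i.e.\ proving $0<C_{F_2}\le D_{F_2}<\infty$, and deducing weak measurability of each factor from that of the product, which genuinely uses the strict positivity of the relevant integrals.
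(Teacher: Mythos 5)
Your proof is correct, and on the forward direction it is essentially the paper's own argument: restrict \eqref{deframe} to unit simple tensors, factor the integral by Fubini--Tonelli into (in your notation) $P_1(f)P_2(g)$, and optimize the two-sided bound $A\le P_1(f)P_2(g)\le B$ over unit $g$. Your explicit disentangling chain ($P_1(f)\le B/D_{F_2}$ first, then $C_{F_2}\ge A\,D_{F_2}/B>0$, then $P_1(f)\ge A/C_{F_2}$) is the same mechanism the paper compresses into the remark that $P_2(g)\neq 0$ for all $g\neq 0$ together with the definitions of $C_{F_2},D_{F_2}$; both arguments tacitly rest on the fact that $P_2(g)>0$ for every unit $g$, which follows from the lower bound in \eqref{deframe} because $P_2(g)=0$ makes the factored integral vanish. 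You actually do more than the paper here: the paper is silent on why $F_1$ and $F_2$ are weakly measurable, whereas your Fubini slicing argument (divide the product section by $\langle g,F_2(x_2^*)\rangle$ at a point $x_2^*$ where it is nonzero, available since $P_2(g)>0$) closes that gap.

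The converse is where you genuinely diverge. The paper verifies the two inequalities on simple tensors, notes that their span is dense, and delegates the extension to all of $\h$ to \cite[Proposition 2.5]{RaNaDe}. You instead work at the operator level: identify $T_F^*=T_{F_1}^*\otimes T_{F_2}^*$, hence $S_F=S_{F_1}\otimes S_{F_2}$, and read off $A_1A_2 I\le S_F\le B_1B_2 I$ from
\[
S_{F_1}\otimes S_{F_2}-A_1A_2 I=(S_{F_1}-A_1 I)\otimes S_{F_2}+A_1 I\otimes(S_{F_2}-A_2 I)
\]
and its companion identity, using that tensor products of non-negative operators are non-negative. This buys you two things: the identity $S_{F_1\otimes F_2}=S_{F_1}\otimes S_{F_2}$, which the paper only establishes later (Theorem \ref{TF}, again via a density argument), falls out as a by-product; and the reason the \emph{lower} frame bound survives the passage from the dense span to all of $\h$ becomes visible rather than being hidden in a citation. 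The one step you gloss is ``extends by boundedness'': the analysis map of $F$ is not a priori bounded, so its agreement with the bounded operator $T_{F_1}^*\otimes T_{F_2}^*$ beyond finite sums of simple tensors still needs a short limiting argument --- approximate $\vec f\in\h$ by finite sums $\vec f_n$ of simple tensors, note $\langle\vec f_n,F(x)\rangle\to\langle\vec f,F(x)\rangle$ for every $x$ (Cauchy--Schwarz, using $\norm{F(x)}=\norm{F_1(x_1)}\,\norm{F_2(x_2)}<\infty$), while $(T_{F_1}^*\otimes T_{F_2}^*)\vec f_n$ converges in $L^2(X,\mu)$, so the two limits agree a.e.\ along a subsequence. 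That sentence is exactly the content of the paper's citation, so you are at the same level of rigor; it just deserves to be said.
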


\begin{proof}
Assume that $F = F_1 \otimes F_2$ is a continuous  frame for $\h = \h_1 \otimes \h_2 $ with respect to $(X ,\mu)$. Let $f\in \h_1 \setminus \{ 0
\},$
and fix  $g\in \h_2 \setminus \{ 0 \}$. Then $ f\otimes g \in \h $, and
$$
T^* _{ F_1 \otimes F_2} ( f\otimes g) := \langle f\otimes g , F_1(x_1) \otimes F_2(x_2) \rangle =
\langle f , F_1(x_1) \rangle  \langle  g ,  F_2(x_2) \rangle 
$$
implies that (by Fubini's theorem)
\begin{multline*}
\int_X |\langle f\otimes g , F_1(x_1) \otimes F_2(x_2)  \rangle|^{2}\,d\mu(x) \\
= \int_{X_1} |\langle f, F_1(x_1) \rangle|^{2}\,d\mu_1(x_1)
\int_{X_2} |\langle g , F_2(x_2) \rangle|^{2}\,  d\mu_2(x_2).
\end{multline*}
Now, \eqref{deframe} and
$$
\| f\otimes g \|_{\otimes} = \| f\|_{\h_1 } \| g \|_{ \h_2}
$$
imply
$$
A\|f\otimes g \|_{\otimes} ^{2}   \leq \int_{X_1} |\langle f, F_1(x_1) \rangle|^{2}\,d\mu_1(x_1)
\int_{X_2} |\langle g , F_2(x_2) \rangle|^{2}\,  d\mu_2(x_2)
\leq B\| f\otimes g \|_{\otimes} ^{2},
$$
so that
\begin{align*}
\frac{A \| g \|^2 _{ \h_2}}{\int_{X_2} |\langle g , F_2(x_2) \rangle|^{2}\,  d\mu_2(x_2)  }
\| f\|^2 _{\h_1 } & \leq \int_{X_1} |\langle f, F_1(x_1) \rangle|^{2}\,d\mu_1(x_1)
\\
& \leq \frac{B \| g \|^2 _{ \h_2}}{\int_{X_2} |\langle g , F_2(x_2) \rangle|^{2}\,  d\mu_2(x_2)  }
\| f\|^2 _{\h_1 }.
\end{align*}
Notice that $\int_{X_2} |\langle g , F_2(x_2) \rangle|^{2}\,  d\mu_2(x_2) \neq 0$ for all $g \in \h_2 \setminus \left\{ 0 \right\}$, and choose
\begin{align*}
 A_1  & := \sup_{\| g \|^2 _{ \h_2} = 1}
 \frac{A }{\int_{X_2} |\langle g , F_2(x_2) \rangle|^{2}\,  d\mu_2(x_2)  } = \frac{A}{C_{F_2}} > 0, \\
 B_1  & :=  \inf_{ \| g \|^2 _{ \h_2} = 1}
 \frac{B}{\int_{X_2} |\langle g , F_2(x_2) \rangle|^{2}\,  d\mu_2(x_2)  } = \frac{B}{D_{F_2}} < \infty,
\end{align*}
with $C_{F_2}$ and $D_{F_2}$ given by \eqref{eq:lowerboundF1} and \eqref{eq:upperboundF1} respectively.

Thus we conclude that $ F_1 $ is a continuous frame for $ \h_1 $ with respect to $(X_1, \mu_1) $ with the continuous frame bounds $A_1$ and $B_2$.

By similar arguments we conclude that  $F_2 $ is a continuous frame for $ \h_2 $ with respect to $(X_2, \mu_2) $ with continuous frame bounds $  0< A_2   = A/C_{F_1}$ and $  B_2   = B/D_{F_1} < \infty,$
with $C_{F_1}$ and $D_{F_1}$ given by \eqref{eq:lowerboundF2} and \eqref{eq:upperboundF2} respectively.

\par

For the converse, by the assumptions it immediately follows that
$ F = F_1 \otimes F_2 $ is weakly measurable on $ \h $ with respect to $ (X, \mu) $, so it remains to check \eqref{deframe}.

Let $f\otimes g$ be a simple tensor. Then
\begin{multline*}
\norm{T^*  _{F_1 \otimes F_2} (f \otimes g)}^2 = \int_{X_1} \int_{X_2} \left| \left< f \otimes g , F_1 (x_1) \otimes F_2 (x_2) \right> \right|^2 d
\mu(x_1) d \mu(x_2) \\
 = \int \left| \left< f  , F(x_1)\right> \right|^2 d \mu(x_1) \int \left| \left< g , F_2 (x_2) \right> \right|^2 d \mu(x_2) \\
\le B_1 B_2 \norm{f}_{\h_1} ^2 \norm{g}_{\h_2} ^2 =  B_1 B_2 \norm{f \otimes g}_{\otimes} ^2,
 \end{multline*}
and similarly
$$
\norm{T^*  _{F_1 \otimes F_2} (f \otimes g)}^2  \geq A_1 A_2 \norm{f \otimes g}_{\otimes} ^2.
$$

This is true for the span of $f \otimes g$ which is dense in $\h_1 \otimes \h_2$.
By \cite[Proposition 2.5]{RaNaDe} it follows that $F_1 \otimes F_2$ is a continuous frame with the frame bounds $ A = A_1 A_2 $ and $B= B_1 B_2$.

\end{proof}

From the proof of Theorem \ref{thm:main} we also have the following observation.

\begin{cor}
Let the assumptions of Theorem \ref{thm:main} hold.
Then the  mapping $F = F_1 \otimes F_2 :X\to\h$ is  a continuous bilinear Bessel mapping for $\h$ with respect to $(X ,\mu)$ if and only if
$ F_1 $ is a continuous  Bessel mapping  for $\h_1$ with respect to $(X_1, \mu_1) $ and
$F_2 $ is a continuous Bessel mapping  for $\h_2 $ with respect to   $(X_2, \mu_2) $.
\end{cor}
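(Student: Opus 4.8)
The plan is to observe that the Corollary falls out of the proof of Theorem \ref{thm:main} once one discards every lower estimate and keeps only the upper (Bessel) inequalities; no new mechanism is needed. I will therefore re-run that argument in both directions, taking care only at the points where the frame hypothesis was silently used to guarantee strict positivity of the auxiliary integrals.

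For the forward implication, suppose $F = F_1 \otimes F_2$ is a Bessel mapping with bound $B$. Taking a simple tensor $f \otimes g$ and applying Fubini's theorem exactly as in the theorem gives
$$
\int_{X_1} |\langle f, F_1(x_1)\rangle|^2 \, d\mu_1(x_1) \cdot \int_{X_2} |\langle g, F_2(x_2)\rangle|^2 \, d\mu_2(x_2) \le B \|f\|_{\h_1}^2 \|g\|_{\h_2}^2 .
$$
Fixing any $g$ for which the second integral is strictly positive and dividing, then optimizing over such $g$, yields the Bessel bound $B/D_{F_2}$ for $F_1$ (with $D_{F_2}$ as in \eqref{eq:upperboundF1}); symmetrically $F_2$ is Bessel with bound $B/D_{F_1}$. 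Weak measurability of the factors is inherited from that of $F_1 \otimes F_2$: for fixed $f$ the map $(x_1,x_2) \mapsto \langle f, F_1(x_1)\rangle \langle g, F_2(x_2)\rangle$ is measurable, and slicing at any $x_2$ with $\langle g, F_2(x_2)\rangle \neq 0$ (such a slice exists on a set of positive measure once $D_{F_2} > 0$) shows that $x_1 \mapsto \langle f, F_1(x_1)\rangle$ is measurable.

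The converse is literally the upper half of the converse in Theorem \ref{thm:main}: on a simple tensor the same computation gives $\|T^*_{F_1 \otimes F_2}(f \otimes g)\|^2 \le B_1 B_2 \|f \otimes g\|_{\otimes}^2$, which extends from the dense span of simple tensors to all of $\h$ by \cite[Proposition 2.5]{RaNaDe}, so that $F_1 \otimes F_2$ is Bessel with bound $B_1 B_2$.

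The one genuine subtlety — and the step I expect to require care — is the degenerate situation hidden in the forward direction. Unlike the frame case, where the lower bound forces $\int_{X_2} |\langle g, F_2(x_2)\rangle|^2 \, d\mu_2(x_2) > 0$ for every $g \neq 0$, a Bessel mapping may vanish $\mu$-almost everywhere; if $F_2 = 0$ a.e. then $F_1 \otimes F_2 = 0$ a.e. is trivially Bessel while saying nothing about $F_1$. Thus the forward implication should be read under the harmless non-degeneracy assumption that neither factor vanishes almost everywhere (equivalently $D_{F_1}, D_{F_2} > 0$), which is exactly what is needed to make the division above legitimate and to produce the slice used for measurability.
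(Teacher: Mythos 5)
Your proof is correct and takes the same route as the paper: the paper offers no separate argument for this corollary, asserting only that it follows from the proof of Theorem \ref{thm:main}, i.e.\ by keeping the upper (Bessel) estimates and discarding the lower ones, which is precisely what you do in both directions, including the appeal to \cite[Proposition 2.5]{RaNaDe} for the extension from simple tensors to all of $\h$.

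The one point where you go beyond the paper is the degeneracy caveat, and it is a genuine mathematical point rather than excess caution: if $F_2 \equiv 0$ and $F_1$ is any mapping that fails to be Bessel (it need not even be weakly measurable), then $F_1 \otimes F_2 = 0$ is a Bessel mapping for $\h$, so the forward implication of the corollary as stated is false without a non-degeneracy hypothesis. In the frame case of Theorem \ref{thm:main} this cannot occur, because the lower frame bound of $F_1 \otimes F_2$ forces $\int_{X_2} |\langle g , F_2(x_2) \rangle|^{2}\, d\mu_2(x_2) > 0$ for every $g \neq 0$ --- a positivity the paper notes explicitly in that proof but which has no counterpart for Bessel mappings. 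Your slicing argument for recovering weak measurability of the factors is likewise needed and is not addressed in the paper; one small polish there: to avoid circularity, justify the slice by choosing a single point $x_2^0$ with $F_2(x_2^0) \neq 0$ (such a point exists because $F_2$ is not a.e.\ zero), rather than by invoking $D_{F_2} > 0$, since the quantity $D_{F_2}$ already presupposes the measurability being established.
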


\par

\subsection{Dual pairs of continuous frames}\label{sec:dualpair0}

Next we discuss dual continuous frames. If $F_j $ are continuous frames for $\h_j $, $j=1,2$,
then we may consider  dual frames $G_j$ which fulfill
$$
\langle f, g \rangle = \int_{X_j} \langle f, F_j (x_j) \rangle \langle G_j (x_j), g \rangle d\mu (x_j), \;\; \forall f,g \in \h_j, \; j=1,2,
$$
cf. Definition  \ref{def:dualframes} for a more general situation
(see also \cite{Gb}). It follows from Theorem \ref{thm:main} that such dual frames
give rise to continuous (dual) frames for the tensor product Hilbert space $ \h = \h_1 \otimes \h_2$.

In this subsection we focus on the frame operator in the context of tensor products,
and show that it gives rise to the {\em canonical dual frame} for a given  frame.
However, as we shall see, there always exist non-simple dual frames for tensor products of Hilbert spaces.

%
%

Let us note that $\langle S_{F}\vec{f},\vec{f}\rangle=\int_{X}|\langle \vec{f},F(x)\rangle |^{2}\,d\mu(x)$. Therefore \cite{Ali2,RaNaDe},
it follows that $0 <AI\leq S_{F}\leq BI$. Hence $S_{F}$ is invertible, positive and $1/B I\leq S^{-1}_{F}\leq 1/A I$.
Every $\vec{f}\in\h$ has (weak) representations of the form
\begin{eqnarray} \label{eq:dualframerepr}
\vec{f} & =  S_{F}^{-1}S_{F}\vec{f} =\int_{X}\langle \vec{f}, F(x)\rangle
S_{F}^{-1}F(x)\,d\mu(x) \\
& =S_{F}S_{F}^{-1} \vec{f}=\int_{X}\langle
\vec{f}, S_{F}^{-1}F(x)\rangle F(x)\,d\mu(x). \nonumber
\end{eqnarray}

\par

It can be proved that the mapping $F:X\to\h$ is a continuous
frame with respect to $(X, \mu)$ for $\h$ if and only if the
operator $S_{F}$ is a bounded and invertible operator.
(If $F$ is a Bessel mapping, then  $S_{F}$ is  bounded, selfadjoint and non-negative.)

To each continuous frame $F$ one can associate a dual continuous frame which is introduced as follows.

\begin{defn} \label{def:dualframes}
Let $F$ and $G$ be continuous frames for $\h = \h_1 \otimes \h_2$ with respect to $(X,\mu)= ( X_1 \times X_2, \mu_1 \otimes \mu_2)$.
The frame  $G$ is a continuous dual frame of $F$ if
$$
\vec{f} = \int_{X} \langle \vec{f},F(x)\rangle  G(x) d\mu(x), \qquad \forall \vec{f} \in\h,
$$
in the weak sense, i.e. if
\begin{equation}\label{dual} \langle \vec{f},\vec{g}\rangle=\int_{X}\langle
\vec{f},F(x)\rangle\langle G(x),\vec{g}\rangle d\mu(x), \quad   \forall  \vec{f}, \vec{g} \in\h.
\end{equation}

In this case the pair $(F,G)$ is called a \textit{dual pair of continuous frames}.
\end{defn}

\par

By Definition \ref{def:dualframes} and \eqref{eq:dualframerepr} it follows that for a given continuous frame $F$
there always exists an associated dual pair, i.e. $ (F, S_F ^{-1}  F)$ and  $ (S_F ^{-1}  F, F)$ are dual pairs.
The frame $S_F ^{-1}  F$ is called  {\em the canonical dual frame} for $F$, denoted by $\widetilde F (x)$.

\par

In the next theorem we establish the tensor product version of the usual identification of continuous frame operator in terms of analysis and
synthesis operators.
We refer to \cite{peter2,RaNaDe} when $\h$ is a Hilbert space.

\begin{thm}\label{TF}
Let $(X, \mu) = ( X_1 \times X_2, \mu_1 \otimes \mu_2)$  be a tensor product measure space and let $F$ be a Bessel
mapping from $X$ to $\h = \h_1 \otimes \h_2.$ Then the synthesis operator
$T_{F}:L^{2}(X, \mu)\to\h$ given  by \eqref{eq:synthesisopr}
is a well defined, linear and bounded operator, and its adjoint  operator
$ T_{F}^{*}: \h \to L^{2}(X, \mu)$ is given by \eqref{eq:adjointsynthesisopr}.

If $F = F_1 \otimes F_2 $ is  a continuous  frame for $\h$ with respect to $(X ,\mu)$,
and $\vec{f} = f_1 \otimes f_2 \in \h,$ then the analysis operator can be  represented by
\begin{equation} \label{eq:analysisoprepr}
(T_{F}^{*}\vec{f})(x)=\langle f_1, F_1 (x_1) \rangle \langle f_2, F_2 (x_2) \rangle
\end{equation}

The continuous frame operator $S_F$ is given by $S_{F}=T_{F}T_{F}^{*},$
and
$$
S_{F_1 \otimes F_2} =   S_{F_1} \otimes S_{F_2}.
$$
The canonical dual frame for $F$ is $ G = S_{F_1 } ^{-1} F_1 \otimes S _{F_2 } ^{-1} F_2$.
\end{thm}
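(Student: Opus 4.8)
The first assertion of the theorem --- that $T_F$ is a well-defined, linear, bounded synthesis operator with adjoint given by \eqref{eq:adjointsynthesisopr} --- is the standard continuous-frame statement applied to the single Hilbert space $\h = \h_1 \otimes \h_2$, so the plan is simply to invoke the Bessel hypothesis together with \cite{peter2,RaNaDe}. The representation \eqref{eq:analysisoprepr} of the analysis operator on a simple tensor $\vec{f} = f_1 \otimes f_2$ is then immediate from \eqref{eq:adjointsynthesisopr} and the tensor inner product \eqref{eq:tens innprod1}, giving
\[
(T_F^{*}\vec{f})(x) = \langle f_1 \otimes f_2, F_1(x_1) \otimes F_2(x_2)\rangle_{\otimes} = \langle f_1, F_1(x_1)\rangle_{\h_1}\,\langle f_2, F_2(x_2)\rangle_{\h_2},
\]
which requires no further argument.

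The heart of the theorem is the factorization $S_{F_1 \otimes F_2} = S_{F_1} \otimes S_{F_2}$, and I would argue weakly on simple tensors and then pass to the closure. Fixing $\vec{f} = f_1 \otimes f_2$ and $\vec{g} = g_1 \otimes g_2$, I would use the weak description $\langle S_F \vec{f}, \vec{g}\rangle = \int_X \langle \vec{f}, F(x)\rangle \langle F(x), \vec{g}\rangle\, d\mu(x)$ together with \eqref{eq:analysisoprepr} to write the integrand as a product of an $x_1$-factor and an $x_2$-factor. Since each factor is integrable by the frame property of $F_1$ and $F_2$, Fubini's theorem splits the double integral and yields $\langle S_{F_1} f_1, g_1\rangle_{\h_1}\,\langle S_{F_2} f_2, g_2\rangle_{\h_2}$. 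By Lemma \ref{lm:tensorprodprop}b) this is exactly $\langle (S_{F_1} \otimes S_{F_2})(f_1 \otimes f_2), g_1 \otimes g_2\rangle$, so the two operators induce the same sesquilinear form on the simple tensors.

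To promote this to the identity on all of $\h$, I would invoke boundedness of both sides: $S_F$ is bounded since $F$ is a frame (hence Bessel), while $S_{F_1} \otimes S_{F_2}$ is bounded with norm $\|S_{F_1}\|\,\|S_{F_2}\|$, again by Lemma \ref{lm:tensorprodprop}b). Two bounded operators whose forms agree on the span of the simple tensors --- dense by Lemma \ref{lm:tensorprodprop}c) --- must coincide. This density-and-boundedness closure is the one step deserving explicit care, since the weak definition of $S_F$ controls it only through the integral; everything else is a direct computation. Finally, the canonical dual follows at once: using the invertibility rule $(S_{F_1} \otimes S_{F_2})^{-1} = S_{F_1}^{-1} \otimes S_{F_2}^{-1}$ recorded in the Preliminaries together with the action on simple tensors, one obtains
\[
\widetilde{F}(x) = S_F^{-1} F(x) = (S_{F_1}^{-1} \otimes S_{F_2}^{-1})\bigl(F_1(x_1) \otimes F_2(x_2)\bigr) = S_{F_1}^{-1} F_1(x_1) \otimes S_{F_2}^{-1} F_2(x_2),
\]
which is precisely the claimed form of $G$.
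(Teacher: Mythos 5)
Your proposal is correct and follows essentially the same route as the paper: verify $S_{F_1 \otimes F_2} = S_{F_1} \otimes S_{F_2}$ on simple tensors, extend to all of $\h$ by density of their span together with boundedness (the paper invokes Lemma \ref{lm:tensorprodprop} and \cite[Proposition 2.5]{RaNaDe} for exactly this closure step), and then obtain the canonical dual from $(S_{F_1} \otimes S_{F_2})^{-1} = S_{F_1}^{-1} \otimes S_{F_2}^{-1}$ applied to $F_1(x_1) \otimes F_2(x_2)$. The only difference is one of formulation: you compute the sesquilinear form $\langle S_F \vec{f}, \vec{g} \rangle$ and split the scalar integral by Fubini, whereas the paper manipulates the vector-valued integral $T_F T_F^{*}(f_1 \otimes f_2)$ and factors it into a tensor product of two vector-valued integrals --- your weak computation is in effect the rigorous justification of that factorization, so the two arguments are the same in substance.
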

\begin{proof} The first part of the claim follows immediately from the definition of $T_F$ given by \eqref{eq:synthesisopr} .
Furthermore, if $F = F_1 \otimes F_2 $ is  a continuous bilinear frame for $\h$ with respect to $(X ,\mu)$, then the representation
\eqref{eq:analysisoprepr} follows directly from \eqref{eq:tens innprod1}.

\par

It remains to show the second part of Theorem  \ref{TF}.
Let  $ f_j \in \h_j $, $ j=1,2.$ Then
\begin{align*}
T_F T_F^*(f_1\otimes f_2) & = T_F \left( \left< f_1, F_1 (x_1) \right> \left<f_2, F_2 (x_2)\right> \right) \\
& = \int_X \left< f_1, F_1 (x_1) \right> \left<f_2, F_2 (x_2)\right> F_1(x_1) \otimes F_2 (x_2) d \mu (x) \\
& = \int_{X_1} \left< f_1, F_1 (x_1) \right> F_1(x_1) d \mu_1 (x_1) \otimes \int_{X_2} \left<f_2, F_2 (x_2)\right> F_2 (x_2) d\mu_2 (x_2) \\
& = S_{F_1} f_1 \otimes S_{F_2} f_2 = \left( S_{F_1} \otimes S_{F_2} \right) \left(f_1 \otimes f_2 \right),
\end{align*}
and
\begin{align*}
T_F T_F^*(f_1\otimes f_2) & =
 \int_X \left< f_1, F_1 (x_1) \right> \left<f_2, F_2 (x_2)\right> F_1(x_1) \otimes F_2 (x_2) d \mu (x) \\
& = \int_{X} \left< f_1 \otimes f_2, F_1 (x_1) \otimes F_2 (x_2) \right> F_1(x_1) \otimes F_2 (x_2) d \mu (x)   \\
& = S_{F_1 \otimes F_2} (f_1 \otimes f_2 ).
\end{align*}

Therefore on simple tensors we have that $S_F = S_{F_1} \otimes S_{F_2}$. By Lemma
\ref{lm:tensorprodprop} (see also \cite[Proposition 2.5]{RaNaDe}) this is true on all of $\h$.

Moreover, $ S_F $ is self-adjoint and we have
$$
S^{-1} _F = (S_{F_1 } \otimes S_{F_2})^{-1} = S_{F_1 } ^{-1} \otimes S_{F_2} ^{-1}
= S_{G_1 }  \otimes S_{G_2}  = S_{G_1 \otimes G_2},
$$
where $ G_1 $ and $ G_2 $ are canonical dual frames of $F_1 $ and $ F_2 $ respectively.

Furthermore,
$$
S^{-1} _{F_1 \otimes F_2} ( F_1 \otimes F_2 ) =
S_{F_1 } ^{-1} \otimes S_{F_2} ^{-1} (F_1 \otimes F_2) =
(S_{F_1 } ^{-1} F_1) \otimes (S_{F_2} ^{-1} F_2) = G_1 \otimes G_2,
$$
which proves the claim.
\end{proof}

Recall that in $ L^{2} ( X_1 \times X_2, \mu_1 \otimes \mu_2) $ a simple tensor $f\otimes g$
is just the product $f\otimes g (x) = f(x_1) g(x_2)$ which is commonly identified with an
operator with the integral kernel  $f\otimes g$. Thus, in \eqref{eq:analysisoprepr} we may put
$$
T_{F}^{*} = T_{F_1}^{*} \otimes T_{F_2}^{*}.
$$

\par

Obviously, for a pair of continuous frames $F $ and $ G $ the condition (\ref{dual}) can be written as  $T_G T^*_F=I$ (in the weak sense).


\subsection{Non-simple Frames}
Let us digress a bit, and see if "everything is solved" now considering Theorem \ref{thm:main}.
In this subsection, we actually discuss the existence of non-simple tensor frames, therefore the result mentioned above does \emph{not} cover the full tensor frame theory (since it  concerns only simple tensors).
Let us stress that by Definition \ref{De:frame-Bessel}
it follows that not every frame in a tensor product Hilbert space
has to be represented as a (sequence of) simple tensor(s).
We will show that any continuous frame admits a non-simple dual frame.

Our first result shows that 
tensor Bessel sequences can be constructed with ranks different than $1$.

\begin{lem} Let $f_k(\omega)$ and $g_k(\nu)$ be continuous Bessel mappings in $\h_1$ and $\h_2$ respectively with bounds $B_k$ and $B_k'$ such
that $B := \sum_k B_k \cdot \sum_l B_l' < \infty$, then
$F (\omega,\nu) = \sum_k f_k (\omega) \otimes g_k (\nu)$ is a  Bessel mapping in $\h_1 \otimes \h_2$ with the Bessel bound $B$.
\end{lem}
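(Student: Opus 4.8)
The plan is to reduce everything to the single-simple-tensor bound already established and then control the resulting infinite sum at the level of $L^2(X)$. First I would record the per-term estimate: since $f_k$ is a continuous Bessel mapping for $\h_1$ with bound $B_k$ and $g_k$ is a continuous Bessel mapping for $\h_2$ with bound $B_k'$, the converse (Bessel) direction of Theorem~\ref{thm:main}, i.e.\ the Corollary, shows that the simple tensor $f_k\otimes g_k$ is a Bessel mapping for $\h=\h_1\otimes\h_2$ with bound $B_kB_k'$. Explicitly, for every $\vec f\in\h$,
\[
\int_X\bigl|\langle \vec f, f_k(\omega)\otimes g_k(\nu)\rangle\bigr|^2\,d\mu(\omega,\nu)\le B_kB_k'\,\|\vec f\|^2 .
\]

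Next I would view $(\omega,\nu)\mapsto\langle \vec f,F(\omega,\nu)\rangle$ as the $L^2(X)$-series $\sum_k\langle \vec f, f_k(\cdot)\otimes g_k(\cdot)\rangle$ and set $c_k:=\bigl\|\langle \vec f, f_k(\cdot)\otimes g_k(\cdot)\rangle\bigr\|_{L^2(X)}$. The per-term estimate gives $c_k\le\sqrt{B_kB_k'}\,\|\vec f\|$, and the Cauchy--Schwarz inequality for sequences yields
\[
\sum_k c_k\le\|\vec f\|\sum_k\sqrt{B_k}\sqrt{B_k'}\le\|\vec f\|\Bigl(\sum_k B_k\Bigr)^{1/2}\Bigl(\sum_l B_l'\Bigr)^{1/2}=\sqrt{B}\,\|\vec f\|<\infty .
\]
Hence the series converges absolutely in $L^2(X)$, so $\langle \vec f,F(\cdot)\rangle$ is a well-defined measurable function (which is exactly the weak measurability of $F$), and the triangle inequality in $L^2(X)$ gives $\bigl\|\langle \vec f,F(\cdot)\rangle\bigr\|_{L^2(X)}\le\sum_k c_k\le\sqrt{B}\,\|\vec f\|$. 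Squaring produces the Bessel inequality $\int_X|\langle \vec f,F(\omega,\nu)\rangle|^2\,d\mu\le B\,\|\vec f\|^2$ with the asserted bound $B=\bigl(\sum_k B_k\bigr)\bigl(\sum_l B_l'\bigr)$.

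The routine ingredient, namely the single-tensor bound $B_kB_k'$, is inherited verbatim from the converse half of Theorem~\ref{thm:main} and need not be reproved. The point requiring care is the passage to the infinite sum: because continuous Bessel mappings need not be norm bounded, the series $\sum_k f_k(\omega)\otimes g_k(\nu)$ may fail to converge in $\h$ for individual $(\omega,\nu)$, since the scalar series $\sum_k\|f_k(\omega)\|\,\|g_k(\nu)\|$ can diverge. The device that sidesteps this is to argue entirely at the level of the analysis coefficients in $L^2(X)$: the summability $\sum_k\sqrt{B_kB_k'}\le\sqrt{B}$ simultaneously supplies absolute $L^2$-convergence, weak measurability through the $L^2$-limit, and the norm bound. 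I expect this convergence bookkeeping to be the only genuine obstacle, the remainder being a single application of Cauchy--Schwarz.
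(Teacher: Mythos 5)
Your proof is correct and reaches the stated bound $B$, but it follows a genuinely different route from the paper's. The paper never invokes the per-term tensor bound $B_kB_k'$: it tests $F$ only against simple tensors $\psi\otimes\phi$, expands $\langle\psi\otimes\phi,F(x_1,x_2)\rangle=\sum_k\langle\psi,f_k(x_1)\rangle\langle\phi,g_k(x_2)\rangle$, applies the Cauchy--Schwarz inequality pointwise in the summation index, $\left|\sum_k a_k b_k\right|^2\le\sum_k|a_k|^2\cdot\sum_l|b_l|^2$, and then integrates (Fubini) to split the double integral into the product of two single integrals, bounded by $\sum_k B_k\|\psi\|^2$ and $\sum_l B_l'\|\phi\|^2$ respectively; finally it passes from the dense span of simple tensors to all of $\h_1\otimes\h_2$ by citing \cite[Proposition 2.5]{RaNaDe}. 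You instead fix an arbitrary $\vec{f}\in\h$ from the start: you import the bound $B_kB_k'$ for each simple tensor mapping $f_k\otimes g_k$ from the Bessel half of Theorem~\ref{thm:main}, and then combine the triangle inequality in $L^2(X,\mu)$ with Cauchy--Schwarz applied to the sequence of constants $\sqrt{B_kB_k'}$. What your route buys: it dispenses with the density-extension step altogether, it is modular (reusing the already proved tensor Bessel statement), and it is the only one of the two arguments that explicitly confronts convergence of the defining series --- you correctly note that $\sum_k f_k(\omega)\otimes g_k(\nu)$ need not converge in $\h$ for individual $(\omega,\nu)$, and that absolute convergence of the analysis coefficients in $L^2(X,\mu)$ is what makes $\langle\vec{f},F(\cdot)\rangle$ well defined, whereas the paper silently performs the interchange $\langle\psi\otimes\phi,\sum_k f_k\otimes g_k\rangle=\sum_k\langle\psi,f_k\rangle\langle\phi,g_k\rangle$. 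What the paper's route buys: it is self-contained at the scalar level, needing no appeal to Theorem~\ref{thm:main}, and the pointwise Cauchy--Schwarz inside the integral yields the bound in one stroke on simple tensors. Both arguments produce exactly the same constant $B=\sum_k B_k\cdot\sum_l B_l'$.
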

\begin{proof} Note that
\begin{multline*}
|\langle \psi \otimes \phi,  F (x_1, x_2) \rangle |^2
 = | \langle \psi \otimes \phi, \sum_k f_k (x_1) \otimes g_k (x_2) \rangle |^2 \\
 =  | \sum_k  \langle \psi,  f_k (x_1)  \rangle  \langle  \phi, g_k (x_2) \rangle |^2.
\end{multline*}
Then we have
\begin{multline*}
\int  |\langle \psi \otimes \phi,  F (x_1, x_2) \rangle |^2 d\mu (x_1, x_2)
= \int  | \sum_k  \langle \psi , f_k (x_1)  \rangle  \langle  \phi, g_k (x_2) \rangle |^2 d\mu (x_1, x_2)  \\
\leq \sum_k \int  | \langle \psi , f_k (x_1)  \rangle|^2 d\mu (x_1) \sum_l  \int | \langle  \phi, g_l (x_2) \rangle |^2 d\mu ( x_2)  \\
\leq \sum_k B_{k} \| \psi\|^2 \cdot \sum_l B'_{l} \| \phi\|^2
\leq  (\sum_k B_{k} \sum_l B'_{l} ) \| \psi\|^2 \cdot \| \phi\|^2. \\
\end{multline*}
The result now follows by extension from simple tensors to all of $\h_1 \otimes \h_2$ (cf. \cite[Proposition 2.5]{RaNaDe}).
\end{proof}

A direct converse can never be true (consider e.g. any $f_k$ and $g_k = 0$).
%

So, we now know that non-simple Bessel sequence exist. If we now consider a fixed frame, do there dual frames exist, which are not necessarily of the form given by Theorem \ref{thm:main} (see also \cite{xxlfei1}).
More precisely, if $F_1 \otimes F_2$ is a frame for $\h$ with respect to $ (X, \mu)$, then we examine the existence of its dual frame $ G $
such that $ G \neq  G_1 \otimes G_2 $ for any $ G_1 \in \h_1,$ $ G_2 \in \h_2.$
Let us shortly digress from the logical order of results and rather use the proof of this result as a motivation for the next section.

We first recall that  a continuous frame $F$ is {\em redundant} if
$$
R (F) := \text{dim} (\range {T^{*} _{F}} ^\perp) >0.
$$
It has been observed that $R(F)$ depends on the underlying measure space $ (X, \mu)$. For example, if  $ (X, \mu)$ is non-atomic, then
$R(F) = \infty$. We refer to \cite{SpBa} for details.

\par

\begin{lem} \label{thm:nontensordual}
Let $ \text{dim} (\h_1 ), $ $ \text{dim} (\h_2 ) > 1 $,  and let $ F_1 \otimes F_2 $ be a redundant frame for $\h$. Then $ F_1 \otimes F_2 $ admits at least one non-simple tensor product dual.
\end{lem}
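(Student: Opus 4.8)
The plan is to realize the desired dual as a perturbation of the canonical dual $\widetilde F = \widetilde F_1 \otimes \widetilde F_2$, using the redundancy to move in a ``null direction'' of the synthesis operator and the hypothesis $\dim \h_j > 1$ to inject a rank-two tensor. Recall from the characterization of all duals (Theorem \ref{olecontinuous}, together with the discussion after Definition \ref{def:dualframes}) that every continuous dual frame of $F$ has the form $G = \widetilde F + H$, where $H$ is a Bessel mapping with $T_H T_F^* = 0$; equivalently $\range{T_H^*} \subseteq \kernel{T_F} = \range{T_F^*}^{\perp}$. Since $F = F_1 \otimes F_2$ is redundant, $R(F) = \dim\!\big(\range{T_F^*}^{\perp}\big) > 0$, so I fix a nonzero $\psi \in \range{T_F^*}^{\perp} \subseteq L^2(X,\mu)$. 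Because $\dim \h_1, \dim \h_2 > 1$ I also fix orthonormal pairs $a_1,a_2 \in \h_1$ and $b_1,b_2 \in \h_2$ and put $v = a_1 \otimes b_1 + a_2 \otimes b_2$, a tensor of rank two.

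For $t \in \Comp$ set $H_t(x) = t\,\overline{\psi(x)}\, v$ and $G_t = \widetilde F + H_t$. Each $H_t$ is Bessel, and $T_{H_t} T_F^* \vec f = t\,\langle T_F^* \vec f, \psi \rangle_{L^2}\, v = 0$ for all $\vec f$ since $\psi \perp \range{T_F^*}$; as $T_{G_t} T_F^* = I + T_{H_t}T_F^* = I$ and $G_t$ is Bessel, every $G_t$ is a genuine continuous dual frame of $F$. It remains to find $t$ for which $G_t$ is non-simple, i.e.\ $G_t \neq G_1 \otimes G_2$. Suppose, for contradiction, that $G_t = G_1 \otimes G_2$. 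Under the identification $\h_1 \otimes \h_2 \cong \h S(\h_2,\h_1)$ the value $G_1(x_1) \otimes G_2(x_2)$ is a rank-$\le 1$ operator, so $G_t(x) = \widetilde F_1(x_1)\otimes \widetilde F_2(x_2) + t\,\overline{\psi(x)}\, v$ would have rank at most one for almost every $x$.

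The heart of the argument is a pointwise finiteness statement. Looking at the $2\times 2$ block indexed by $\{a_1,a_2\}$ and $\{b_1,b_2\}$, the block of $v$ is the identity $I_2$, while the block of $\widetilde F_1(x_1)\otimes \widetilde F_2(x_2)$ is a rank-$\le 1$ matrix $P_x$. Hence the corresponding minor of $\widetilde F_1(x_1)\otimes \widetilde F_2(x_2) + c\,v$ equals $\det(P_x + c I_2) = c^2 + c\,\operatorname{tr}(P_x)$, a monic quadratic in $c$. Any $c$ making the full tensor rank $\le 1$ must annihilate this minor, so $S_x := \{ c \in \Comp : \operatorname{rank}(\widetilde F_1(x_1)\otimes \widetilde F_2(x_2) + c\,v) \le 1 \}$ has at most two elements for every $x$. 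Now choose three distinct scalars $t_1,t_2,t_3$. On the positive-measure set $E = \{\psi \neq 0\}$ the numbers $c_j(x) = t_j\,\overline{\psi(x)}$ are three distinct values; if all three $G_{t_j}$ were simple, then for almost every $x \in E$ we would have $c_1(x),c_2(x),c_3(x) \in S_x$, contradicting $|S_x| \le 2$. Therefore at least one $G_{t_j}$ is non-simple, which is the required dual.

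The step I expect to be delicate is exactly this non-simplicity verification in low dimensions: when $\dim \h_1 = \dim \h_2 = 2$ a naive rank count on $v$ alone yields no contradiction, since both $\widetilde F + H$ and a hypothetical $G_1 \otimes G_2$ produce only rank-$\le 2$ data. The scaling device $t \mapsto G_t$ together with the uniform bound $|S_x| \le 2$ is what circumvents this: it avoids having to control the a priori unknown location of the zero set of $\psi$ relative to the determinantal locus $\{(x,c): c \in S_x\}$, replacing that comparison by a clean pigeonhole over three scalars. Note that this argument needs only $R(F) > 0$, and in particular dispenses with any non-atomicity assumption on $(X,\mu)$.
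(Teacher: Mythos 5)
Your proof is correct, and it takes a genuinely different---and more self-contained---route than the paper's. The paper disposes of this lemma by deferring to the discrete case: it invokes the proof of Theorem 2.3 of Wang--Li, ``replacing sums by integrations,'' and explicitly notes that this transfer only works once a description of \emph{all} duals of a tensor frame is available, i.e.\ it requires Corollary \ref{thm:dualframes} and hence the full RKHS-based classification machinery of Theorem \ref{olecontinuous}; the remaining case analysis (choosing the perturbation so that the resulting dual is non-simple) is likewise outsourced to Wang--Li's case study. You share the paper's basic ingredients---the factorization $\widetilde F = \widetilde F_1 \otimes \widetilde F_2$ from Theorem \ref{TF}, redundancy supplying $0 \neq \psi \in \range{T_F^*}^{\perp}$, and the identification of simple tensors with rank-$\leq 1$ Hilbert--Schmidt operators---but you need only the elementary \emph{sufficiency} half of the dual characterization: $G_t = \widetilde F + H_t$ is Bessel and satisfies $T_{G_t} T_F^* = I$ by direct computation, hence is a dual (you should add the one-line Cauchy--Schwarz argument showing that a Bessel mapping in duality with a frame automatically satisfies the lower frame inequality). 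The genuinely new element is your determinant/pigeonhole device: the $2 \times 2$ compression of $\widetilde F_1(x_1)\otimes\widetilde F_2(x_2) + c\,v$ onto the chosen orthonormal pairs is $P_x + cI_2$ with $\det P_x = 0$, so for each $x$ at most two values of $c$ can keep the pointwise rank $\leq 1$, and three distinct scalars $t_1, t_2, t_3$ then force at least one $G_{t_j}$ to be non-simple. This buys a constructive, fully self-contained proof, independent of both the external reference and the classification theorem, and it handles the low-dimensional and degenerate-alignment cases uniformly; what the paper's route buys is brevity given its Section 4 machinery, turning the lemma into an immediate by-product of the classification result that is the paper's main new contribution there.
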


\begin{proof}

The idea is to follow the steps of the proof of \cite[Theorem 2.3]{WangLi}, and the case study examination given there.
This proof uses the fact that for $ T \in \h_1 \otimes \h_2 ,$ $\text{dim} \left( \range{T} \right) \leq 1 $ if and only if
$ T = f \otimes g$ for some  $ f \in \h_1 $ and $ g\in \h_2$ (\cite[Lemma 2.2]{WangLi}).
Replacing sums by integrations the proof of \cite[Theorem 2.3]{WangLi} can be generalized in a straightforward way, {\em if} we can show the tensor product version of \cite[Theorem 6.3.7]{ole}, i.e. a description of all dual tensor frames of a given tensor frame. This is Corollary \ref{thm:dualframes}.
\end{proof}

In order to make this proof complete we have to introduce the next section, which by itself answers an open question in frame theory.

\section{Full classification of dual continuous frames}

In this section we extend the well known classification of dual discrete frames \cite[Theorem 6.3.7]{ole}
to the continuous frames setting. This is a new result in continuous frame theory, and we apply it  to describe dual frames in the context of
tensor products.

It turned out that the theory of reproducing kernel Hilbert spaces (RKHS) provides convenient tools for the result in this section. The interplay
between RKHS and  frame theory is recently used in \cite{spexxl16} in the study of stable analysis/synthesis processes.
Recall, a Hilbert space
$\h$ is {\em a reproducing kernel Hilbert space} on the set $X$ if it is a subspace of the space of functions from $X$ to $\mathbb{C}$ such that
for every $x\in X$ the linear {\em evaluation functional} $ev_x :\h \rightarrow \mathbb{C}$ defined by
$ ev_x (f) = f(x) $ is bounded, cf. \cite{paulsen_raghupathi_2016}.
From the Hilbert space property we have that there exists
 an element $k_x \in \h$ such that $f(x) = \left< f, k_x \right>$, {\em the reproducing kernel}.

We will also use the following facts from the theory of RKHS:
any closed subspace of an RKHS is again a RKHS,  \cite[Theorem 2.5.]{paulsen_raghupathi_2016}.

We also need the following trivial result, which is a direct result of the definition:
\begin{cor} \label{cor:union1} Let $\h$ and $\g$ be subspaces of a normed space $X$, where the closures are RKHS. Then $\overline{\h \cup \g} = \overline{\h} \cup \overline{\g} $ is a RKHS.
\end{cor}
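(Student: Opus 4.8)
The plan is to treat the statement as two separate assertions: the set identity $\overline{\h \cup \g} = \overline{\h} \cup \overline{\g}$, and the claim that the resulting space is a RKHS. For the identity I would invoke only the elementary topological fact that the closure operator commutes with \emph{finite} unions, which uses nothing about the inner product. Concretely, since $\overline{\h}$ and $\overline{\g}$ are each closed in $X$, their union is closed as a finite union of closed sets; as it contains $\h \cup \g$ it must contain $\overline{\h \cup \g}$, giving one inclusion. For the reverse inclusion, $\h \subseteq \h \cup \g$ forces $\overline{\h} \subseteq \overline{\h \cup \g}$, and symmetrically $\overline{\g} \subseteq \overline{\h \cup \g}$, so $\overline{\h} \cup \overline{\g} \subseteq \overline{\h \cup \g}$. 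This half of the proof is routine and entirely topological.

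For the reproducing-kernel assertion I would argue directly from the definition recalled above. By hypothesis $\overline{\h}$ and $\overline{\g}$ are RKHS on the underlying index set, so for each point $x$ the evaluation functional $ev_x$ is bounded on $\overline{\h}$, with an estimate $|f(x)| \le C_x^{\h}\,\|f\|$ for $f \in \overline{\h}$, and likewise bounded on $\overline{\g}$ with some constant $C_x^{\g}$. Setting $C_x := \max\{C_x^{\h}, C_x^{\g}\}$ shows that $ev_x$ satisfies the same type of estimate on every element of the combined space, so that boundedness of evaluation is \emph{inherited} from the two constituents. Once the combined object is known to be a Hilbert space of functions, the reproducing kernel at $x$ is then produced by the Riesz representation of the bounded functional $ev_x$, and one may also appeal to the previously recalled fact that a closed subspace of an RKHS is again an RKHS.

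The step I expect to be the genuine obstacle is precisely the passage from ``$ev_x$ is bounded on each piece'' to ``$\overline{\h} \cup \overline{\g}$ is a Hilbert space of functions'': the set-theoretic union of two subspaces is in general \emph{not} a linear subspace, so $\overline{\h} \cup \overline{\g}$ need not carry a vector-space (let alone Hilbert) structure unless one closure is contained in the other. I would resolve this either by observing that in the intended application one of the inclusions $\overline{\h} \subseteq \overline{\g}$ or $\overline{\g} \subseteq \overline{\h}$ holds (so the union collapses to a single closed subspace, to which the recalled closed-subspace theorem applies verbatim), or by reading $\overline{\h \cup \g}$ as the closed linear span of $\h \cup \g$ inside the ambient function space; under the latter reading the combined space is manifestly a closed subspace, and the recalled theorem again delivers the RKHS property once the ambient space is identified as an RKHS. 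Clarifying which of these conventions is in force is the only real content to pin down; the remaining verifications are immediate from the definitions.
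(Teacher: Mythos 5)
Your proposal is correct and follows essentially the same route as the paper: the paper's entire proof is your second paragraph, namely bounding the evaluation functional by $\max\left\{ \norm{k^{\h}_x}, \norm{k^{\g}_x} \right\}$ for every $f$ lying in $\overline{\h}\cup\overline{\g}$, with the elementary topological identity $\overline{\h \cup \g} = \overline{\h} \cup \overline{\g}$ taken for granted rather than proved. The linear-structure obstruction you raise in your final paragraph is genuine, but the paper does not resolve it either --- its proof stops at the pointwise estimate on the set-theoretic union, exactly where yours does.
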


\begin{proof}
Let us denote by $k^{\h} _x$ and $k^{\g} _x$ the respective reproducing kernels for the closures.
Then, for $f \in  \overline{\h}$ we have that $\left| f (x) \right| \le \norm{k^{\h} _x} \norm{f}$, and $\left| f (x) \right| \le \norm{k^{\g} _x} \norm{f}$ for $f \in  \overline{\g}$, so that
$$ \left| f (x) \right| \le \max \left\{ \norm{k^{\h} _x}, \norm{k^{\g} _x} \right\} \norm{f}
\text{ for } f \in \overline{\h \cup \g}.$$
\end{proof}

The following result \cite[Proposition 11]{spexxl16}, which relates frames with RKHS, is needed later:

\begin{lem} \label{lem:rkhs}  If $F$ satisfies the lower frame inequality, then $ (\range {T^*_F}, \| \cdot \|) $ is a RKHS. Moreover, for any subspace $ \h_{K} $
of $ L^2 (X, \mu) $, the following are equivalent:
\begin{itemize}
\item[a)] $ \h_{K} $  is a RKHS.
\item[b)] There exists a continuous frame $ F $ such that $\range {T^*_F} = \h_{K} $.
\end{itemize}
\end{lem}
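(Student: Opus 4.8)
The plan is to handle the two halves of the statement separately, using the lower frame inequality to bound evaluation functionals in the first part, and the reproducing kernel itself to manufacture a frame for the implication (a)$\Rightarrow$(b).

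For the first part I would begin by rewriting the lower frame inequality as $\|T^*_F \vec f\|_{L^2}^2 \ge A\|\vec f\|^2$, i.e. $T^*_F$ is bounded below. A bounded-below operator is injective and has closed range, so $\range{T^*_F}$ is a closed subspace of $L^2(X,\mu)$ and hence a Hilbert space under the inherited norm $\|\cdot\|$. Injectivity is the key structural fact: every $\varphi \in \range{T^*_F}$ has a unique preimage $\vec f \in \h$, and so the pointwise-defined function $\varphi(x) = \langle \vec f, F(x)\rangle$ is its canonical representative (two preimages agreeing $\mu$-a.e. must both equal $\vec f$, so they agree everywhere). Boundedness of $ev_x$ is then just Cauchy--Schwarz together with the lower bound:
\[
|\varphi(x)| = |\langle \vec f, F(x)\rangle| \le \|F(x)\|\,\|\vec f\| \le \frac{\|F(x)\|}{\sqrt A}\,\|\varphi\|.
\]
Since $F(x)\in\h$ has finite norm, $ev_x$ is bounded for every $x$, so $(\range{T^*_F},\|\cdot\|)$ is a RKHS. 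The implication (b)$\Rightarrow$(a) is then immediate: a continuous frame satisfies the lower frame inequality, so the first part applies and $\h_K = \range{T^*_F}$ is a RKHS.

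For (a)$\Rightarrow$(b) I would take the ambient Hilbert space to be $\h_K$ itself and define the candidate frame by $F(x):= k_x$, the reproducing kernel at $x$. The reproducing property computes the analysis operator explicitly,
\[
(T^*_F f)(x) = \langle f, F(x)\rangle_{\h_K} = \langle f, k_x\rangle_{\h_K} = f(x),
\]
so that $T^*_F$ is simply the inclusion $\h_K \hookrightarrow L^2(X,\mu)$ and $\range{T^*_F} = \h_K$, as required. Weak measurability of $F$ holds because $x \mapsto \langle f, k_x\rangle = f(x)$ is the measurable function $f \in L^2$, and the frame bounds turn out equal:
\[
\int_X |\langle f, k_x\rangle|^2\,d\mu(x) = \int_X |f(x)|^2\,d\mu(x) = \|f\|_{\h_K}^2,
\]
where the last equality uses that the norm of $\h_K$ is the one inherited from $L^2$. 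Thus $F(x)=k_x$ is in fact a Parseval continuous frame with $\range{T^*_F}=\h_K$, completing the equivalence.

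The analytic content above is light; the delicate point, and the one I would be most careful about, is the measure-theoretic bookkeeping. Elements of a RKHS are genuine functions, whereas members of $L^2(X,\mu)$ are equivalence classes, and it is precisely injectivity of $T^*_F$ (supplied by the lower frame bound) that licenses the passage to canonical pointwise representatives in the first part. Dually, in (a)$\Rightarrow$(b) one must confirm weak measurability of $x\mapsto k_x$ and that the RKHS inner product coincides with the restriction of the $L^2$ inner product; once these are pinned down, the rest is Cauchy--Schwarz and the reproducing identity.
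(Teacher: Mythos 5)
You should know at the outset that the paper does not actually prove this lemma: it is imported verbatim as \cite[Proposition 11]{spexxl16}, so the only meaningful comparison is with that source --- and your argument is essentially the one given there. The two main steps coincide: evaluation functionals on $\range{T^*_F}$ are bounded via Cauchy--Schwarz together with the lower frame bound, and for a)$\Rightarrow$b) the kernel functions $k_x$ themselves form a Parseval continuous frame for $\h_K$ whose analysis operator is the inclusion, so that $\range{T^*_F}=\h_K$. Your handling of the equivalence-class versus genuine-function issue through injectivity of $T^*_F$ is also the right bookkeeping, and matches the care taken in the source.

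The one place where your justification is thinner than it needs to be is the completeness of $\range{T^*_F}$ when \emph{only} the lower inequality is assumed: without a Bessel bound, $T^*_F$ need not be bounded, and is in general defined only on the domain $\left\{ \vec f \in \h \, : \, \langle \vec f, F(\cdot)\rangle \in L^2(X,\mu) \right\}$, so the bounded-operator fact ``bounded below implies closed range'' cannot be invoked as stated. The repair is one line: this analysis operator is a \emph{closed} operator (if $\vec f_n \to \vec f$ in $\h$ and $T^*_F \vec f_n \to \varphi$ in $L^2(X,\mu)$, then $\langle \vec f_n, F(x)\rangle \to \langle \vec f, F(x)\rangle$ for every $x$, while a subsequence of $T^*_F \vec f_n$ converges to $\varphi$ $\mu$-a.e., whence $\varphi = \langle \vec f, F(\cdot)\rangle$ a.e.), and a closed operator that is bounded below has closed range. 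With that insertion the first part, and with it b)$\Rightarrow$a), is sound; your a)$\Rightarrow$b) direction is correct as written, granted the convention --- needed equally in \cite{spexxl16} --- that the inner product on $\h_K$ is the one inherited from $L^2(X,\mu)$.
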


We are now ready to attack the main question in this section.
We first prove the continuous counterpart of \cite[Lemma 6.3.5]{ole}.

\begin{lem} \label{lem:dualcont1} Let $F(x)$ be a continuous frame for the Hilbert space $\h$.
Let $ e_k $ be an orthonormal basis for $\h$ and let $V:L^2(X,\mu) \rightarrow \h$ be a bounded left-inverse of $T_F ^{*} $,
 such that $\left(\ker {V}\right)^\perp$ is a reproducing kernel subspace of $L^2(X,\mu)$.
Then the dual frames of $F$ are precisely the functions $G(x)=\sum_{k \in K} \overline{V^* (e_k)(x)} e_k$.
\end{lem}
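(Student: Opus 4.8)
The plan is to recast the statement in operator language and then follow the discrete argument of \cite[Lemma 6.3.5]{ole}, with the reproducing kernel supplying the continuous replacement for the ``evaluation at the $k$-th coordinate'' that is free in $\ell^2$. The starting point is the observation made after Theorem \ref{TF}: a Bessel mapping $G$ is a dual frame of $F$ precisely when $T_G T_F^\ast = I_{\h}$ in the weak sense, i.e. exactly when the synthesis operator $T_G$ is a bounded left inverse of the analysis operator $T_F^\ast$. So the content of the lemma is that $V \mapsto G$ (via the stated formula) and $G \mapsto T_G$ are mutually inverse correspondences between admissible left inverses $V$ of $T_F^\ast$ and dual frames $G$. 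For the forward direction I would fix such a $V$, let $k_x$ be the reproducing kernel of the RKHS $\left(\ker V\right)^\perp$, and use $\range{V^\ast}\subseteq\left(\ker V\right)^\perp$ to make sense of the pointwise values of each $V^\ast e_k$.

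The reproducing identity then gives, for every $f\in\h$, the relation $(V^\ast f)(x)=\langle V^\ast f, k_x\rangle = \langle f, V k_x\rangle$. Taking $f=e_k$ and expanding in the orthonormal basis shows that the series in the statement converges in $\h$ to a single vector, yielding the linchpin identity
\[
G(x)=\sum_{k}\overline{V^\ast(e_k)(x)}\,e_k = V k_x,
\]
which exhibits the troublesome sum as the image under $V$ of the reproducing kernel, the exact continuous analogue of ``$V\delta_x$''. From $\langle f, G(x)\rangle = (V^\ast f)(x)$ I read off weak measurability of $G$, and also $\int_X|\langle f,G(x)\rangle|^2\,d\mu(x)=\|V^\ast f\|_{L^2}^2\le\|V\|^2\|f\|^2$, so $G$ is Bessel with analysis operator $T_G^\ast=V^\ast$, hence $T_G=V$. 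A lower frame bound comes for free by taking adjoints in $V T_F^\ast=I$: this gives $T_F T_G^\ast=I$, so $T_F$ is a left inverse of $T_G^\ast$ and $\|T_G^\ast f\|\ge\|f\|/\|T_F\|$. Thus $G$ is a genuine continuous frame, and $T_G T_F^\ast = V T_F^\ast = I$ certifies it as a dual of $F$.

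For the converse I would begin with an arbitrary dual frame $G$ and set $V:=T_G$. The dual condition is exactly $V T_F^\ast=I$, so $V$ is a bounded left inverse. Since $G$ is a frame, $T_G^\ast$ is bounded below, hence $\range{T_G^\ast}$ is closed and equals $\left(\ker V\right)^\perp$; by Lemma \ref{lem:rkhs} this space is a RKHS, so $V$ is admissible. Finally the reconstruction formula returns $G$: because $(V^\ast e_k)(x)=(T_G^\ast e_k)(x)=\langle e_k, G(x)\rangle$, the series $\sum_k \overline{(V^\ast e_k)(x)}\,e_k$ is just the orthonormal expansion $\sum_k \langle G(x), e_k\rangle e_k = G(x)$. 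The main obstacle, and the only place where the continuous theory genuinely departs from the discrete one, is the well-definedness of the pointwise evaluations $V^\ast(e_k)(x)$ and the $\h$-convergence of the defining series; both are resolved by the RKHS hypothesis through the identity $G(x)=V k_x$, which is precisely why the assumption that $\left(\ker V\right)^\perp$ be a reproducing kernel subspace cannot be dropped.
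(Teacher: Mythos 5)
Your proof is correct, and its skeleton matches the paper's: both directions set up the correspondence between bounded left-inverses $V$ of $T_F^*$ (with $(\ker V)^\perp$ a RKHS) and dual frames, and both converses take $V = T_{G_0}$ and invoke Lemma \ref{lem:rkhs} to get the RKHS property of $(\ker V)^\perp = \range{T_{G_0}^*}$, then recover $G_0$ by orthonormal expansion. Where you genuinely depart from the paper is in the treatment of the key technical point, namely why the series $\sum_{k} \overline{V^*(e_k)(x)}\, e_k$ makes sense at all. The paper handles this by citation: $V^*e_k$ is a Bessel sequence lying in the RKHS $(\ker V)^\perp$ (via the proof of \cite[Proposition 5.3.1]{ole}), pointwise square-summability then follows from \cite[Proposition 6]{spexxl16}, and the duality identity is verified by an inner-product computation following \cite[Proposition 21]{spexxl16}. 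Your identity $G(x) = V k_x$ short-circuits all of this: it exhibits the series as the orthonormal expansion of a fixed vector of $\h$, so convergence, weak measurability, the Bessel bound $\|V\|^2$, and the identification $T_G = V$ all fall out at once with no external references. You also make explicit something the paper leaves implicit: the lower frame bound for $G$, which you extract by taking adjoints in $V T_F^* = I$ to get $T_F T_G^* = I$ and hence $\|T_G^* f\| \ge \|f\|/\|T_F\|$; this is needed for $G$ to be a dual \emph{frame} in the sense of Definition \ref{def:dualframes} rather than merely a Bessel mapping satisfying the reconstruction identity. The one step worth spelling out in a final write-up is the inclusion $\range{V^*} \subseteq (\ker V)^\perp$, which is what licenses evaluating $V^*f$ pointwise through the kernel $k_x$; you mention it, and it is exactly where the RKHS hypothesis enters, mirroring the paper's remark that $V$ can never be invertible since $L^2(X,\mu)$ itself is not a RKHS.
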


\begin{proof}
The function $G(x)$ is well defined if $\sum_{k \in K}|{V^*(e_k)(x)}|^2 < \infty$.
By \cite[Proposition 6]{spexxl16}, $\sum_{k \in K}|{V^*(e_k)(x)}|^2 < \infty$
if $V^*(e_k)$ is a discrete Bessel sequence in the RKHS $\left( \ker {V} \right)^\perp$.
Now, from the proof of \cite[Proposition 5.3.1]{ole} it follows that this is true.

Next, following \cite[Proposition 21]{spexxl16}, we have
\begin{multline*}
\langle f,g \rangle_\h = \langle V T_F ^{*} f,g \rangle_\h   =  \langle T_F ^{*} f, V^*g \rangle_{L^2(X, \mu)} \\
 =   \langle  T_F ^{*} f, \sum_{k\in K} \langle g,e_k \rangle V^*e_k \rangle_{L^2(X,\mu)}
 =  \langle  T_F ^{*} f,  \langle g,\sum_{k \in K} \overline{V^*(e_k)(.)} \cdot e_k \rangle \rangle_{L^2(X, \mu)} \\
 =  \langle T_F ^{*} f, T_G ^{*} g \rangle_{L^2(X, \mu)}.
\end{multline*}

On the other hand, let $ G_0(x)$ be a frame and set $V=T_{G_0}$. We have that $  \ker {V} ^\perp = \range {T_{G_0} ^{*} }$,
which is a reproducing kernel Hilbert space by Lemma \ref{lem:rkhs}. Thus
\begin{multline*}
T_G ^{*}  (g)  (x) = \langle g, G(x) \rangle_\h  =   \langle g, \sum_{k \in K} V^* e_k (x) e_k \rangle_\h \\
  =  \sum_{k \in K}V^* e_k(x)  \langle g, e_k\rangle_\h =  \left( V^*g \right) (x).
\end{multline*}
Thus $ T_G ^{*} = T_{G_0}  ^{*} $ a.e. and so $ G(x) = G_0 (x)$ a.e.
\end{proof}

Note that the left-inverse $V$ in Lemma \ref{lem:dualcont1} can never be invertible.
(Because then $\ker {V} ^\perp = L^2(\mathbb{R})$, which is not a RKHS).

The continuous version of \cite[Lemma 6.3.6]{ole} can now be given as follows.

\begin{lem} \label{lem:637cont} Let $F(x)$ be a continuous frame for $\h$. 
The bounded left-inverses of $T_F ^*$, i.e. $V T_F^* = \identity{\h}$, with $\kernel{V}^\bot$ being a RKHS  are precisely the operators of the form
\begin{equation} \label{leftinverseV}
V = S_F ^{-1} T_F + W \left( \identity{\h} - T_F ^* S_F ^{-1} T_F \right),
\end{equation}
where $W: L^2(X,\mu) \rightarrow \h$ is a bounded operator with $\kernel{W}^\bot$ being a RKHS.
\end{lem}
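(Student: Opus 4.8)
The plan is to follow the classical discrete argument \cite[Lemma 6.3.6]{ole}, reading \eqref{leftinverseV} as the canonical (pseudo-inverse) left-inverse $S_F^{-1}T_F$ plus a correction that is harmless on $\range{T_F^*}$, and then to supply the one genuinely new ingredient: checking that the reproducing-kernel condition propagates through the formula. First I would record the algebra of $P := T_F^* S_F^{-1} T_F$. Since $S_F = T_F T_F^*$ is positive and invertible (as noted after \eqref{eq:dualframerepr}), a direct computation gives $P = P^* = P^2$ and $T_F P = T_F$, so $P$ is the orthogonal projection of $L^2(X,\mu)$ onto $\range{T_F^*}$ and $I-P$ (with $I$ the identity on $L^2(X,\mu)$) projects onto $\kernel{T_F} = \range{T_F^*}^\perp$; in particular $(I-P)T_F^* = 0$.

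With this, one inclusion is pure bookkeeping: for $V$ as in \eqref{leftinverseV},
$$ V T_F^* = S_F^{-1} T_F T_F^* + W (I-P) T_F^* = S_F^{-1} S_F = \identity{\h}, $$
so every such $V$ is a bounded left-inverse. For the reverse inclusion I would take $W := V$ for an arbitrary admissible left-inverse $V$; using $V T_F^* = \identity{\h}$ we get $V T_F^* S_F^{-1} T_F = S_F^{-1} T_F$, whence the right-hand side of \eqref{leftinverseV} collapses to $S_F^{-1}T_F + V - S_F^{-1}T_F = V$, while $\kernel{W}^\bot = \kernel{V}^\bot$ is a RKHS by hypothesis. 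Thus the two families coincide as sets, up to the reproducing-kernel part.

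What remains — and what I expect to be the hard part — is the point where the continuous theory departs from the discrete one: starting from a $W$ with $\kernel{W}^\bot$ a RKHS, I must show the resulting $V$ satisfies $\kernel{V}^\bot$ RKHS. Here I would use that $V$, being a left-inverse, is surjective, so $V^*$ is bounded below and $\kernel{V}^\bot = \range{V^*}$ is closed. Dualizing \eqref{leftinverseV} gives $V^* = T_F^* S_F^{-1} + (I-P)W^*$, hence for $f \in \h$
$$ V^* f = T_F^* S_F^{-1} f + W^* f - P W^* f \in \range{T_F^*} + \kernel{W}^\bot, $$
where the first and third summands lie in the RKHS $\range{T_F^*}$ (Lemma \ref{lem:rkhs}) and the middle one in the RKHS $\kernel{W}^\bot$. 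Since $V^*$ is bounded below, $\norm{f} \le \delta^{-1}\norm{V^* f}$ for some $\delta>0$; feeding this into the pointwise reproducing-kernel estimates for the three summands produces a bound $\abs{(V^* f)(x)} \le C_x \norm{V^* f}$, so every evaluation functional is bounded on $\range{V^*} = \kernel{V}^\bot$, and the latter is a RKHS. The delicate step is precisely this propagation: the correction $(I-P)W^*$ need not take values in a single reproducing kernel space, so Corollary \ref{cor:union1} cannot be invoked directly; instead one splits $V^* f$ into its $\range{T_F^*}$-part and its $\kernel{W}^\bot$-part and controls each norm by using that $V^*$ is bounded below.
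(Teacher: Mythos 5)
Your proof is correct, and it reaches the conclusion by a route that is essentially dual to the paper's. Both arguments share the algebraic skeleton of \cite[Lemma 6.3.6]{ole} (the computation $V T_F^* = \identity{\h}$ for $V$ of the form \eqref{leftinverseV}, and the collapse of \eqref{leftinverseV} to $V$ once $V T_F^* = \identity{\h}$ is known), but the RKHS bookkeeping is distributed differently. For the direction you call the hard part (from $\kernel{W}^\bot$ RKHS to $\kernel{V}^\bot$ RKHS) the paper argues structurally: it introduces $W_0 := W\pi_{\kernel{T_F}}$, notes $\kernel{W_0}\subseteq\kernel{V}$, writes $\kernel{W_0}^\bot$ as (the closure of) $\kernel{W}^\bot\cup\range{T_F^*}$, applies Corollary \ref{cor:union1}, and then uses that a closed subspace of a RKHS is a RKHS. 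You instead argue quantitatively, exploiting that a left-inverse is surjective, hence $V^*$ is bounded below, and combining this with kernel estimates on the three summands of $V^*f$. For the converse direction the roles are swapped: there the paper is the quantitative one, estimating $\abs{\left(W_0^* f\right)(x)} \le \left( \norm{k_x^V}\norm{V} + \norm{k_x^F}A^{-1}\right)\norm{f}$ with the kernels of $\range{V^*}$ and $\range{T_F^*}$, while your choice $W := V$ makes the RKHS hypothesis transfer verbatim, which is shorter and cleaner. Your route buys two things: the converse becomes trivial, and, more importantly, the hard direction avoids Corollary \ref{cor:union1} altogether. That is a genuine gain in rigor, since the max-of-kernels estimate in the proof of that corollary controls evaluations only on the set-theoretic union $\overline{\h}\cup\overline{\g}$, not on sums $g+h$ of elements drawn from the two spaces, whereas what the paper actually needs is the closed linear span; your inequality $\norm{f}\le \delta^{-1}\norm{V^*f}$ is precisely the ingredient that compensates for the fact that the individual summands of $V^*f$ may have much larger norm than $V^*f$ itself. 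What the paper's route buys in exchange is that its hard direction uses no mapping properties of $V$ beyond the inclusion of kernels, so it does not require the surjectivity/bounded-below observation on which your argument rests.
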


\begin{proof}
The proof of  \cite[Lemma 6.3.6]{ole} can be used directly in the sense that all left-inverses $V$ can be exactly represented by \eqref{leftinverseV}.
It remains to prove the transfer of the RKHS property.
\par

Consider the mapping $W_0 : \kernel{T_F} \rightarrow \h$, defined by $W_0 := W \pi_{\kernel{T_F}}$. The operator $W$ can then be written as $W = W_0 \pi_{\kernel{T_F}} + W \pi_{\range{T_F^*}}$.

By the assumptions we have that
$$ V = S_F^{-1} T_F + W \pi_{\kernel{T_F}} = S_F^{-1} T_F + W_0 \pi_{\kernel{T_F}}, $$
and
$$ V^* = T_F^* S_F^{-1} + \pi_{\kernel{T_F}} W^* = T_F^* S_F^{-1} + \pi_{\kernel{T_F}} W_0^*, $$

Clearly, $\kernel{W_0} = \kernel{W} \cap {\kernel{T_F}}$. In particular, $\kernel{W}^\bot \subseteq \kernel{W_0}^\bot$ and $\kernel{W_0}^\bot = \kernel{W}^\bot \cup \kernel{T_K}^\bot = \kernel{W}^\bot \cup \range{T_K^*}$. This tells us that (by Corollary \ref{cor:union1}) that $\kernel{W}^\bot$ is a RKHS if and only if $\kernel{W_0}^\bot$ is.

Additionally, by construction, $\kernel{W_0} \subseteq \kernel{V}$. Therefore, if we assume that
$\kernel{W}^\bot$ is a RKHS, then $\kernel{W_0}^\bot$ is a RKHS, and so is $\kernel{V}^\bot$. This shows the first direction.

For the opposite direction, assume that $\kernel{V}^\bot$ is a RKHS. Let $k^V_x$ be the kernel of $\range{V^*}$ and $k^F_x$ the one on $\range{T_F^*}$. Then
\begin{eqnarray*}
\left| \left( W_0^* f \right) (x) \right|  & = &  \left| \left( V^* f \right) (x) - \left( T_F^* S_F^{-1} f \right) (x) \right| \\
& \le & \left| \left( V^* f \right) (x) \right| + \left| \left( T_F^* S_F^{-1} f \right) (x) \right| \\
& \le & \norm{k^V_x} \norm{V^* f} + \norm{k^F_x} \norm{T_F^* S_F^{-1} f}  \\
& \le & \left(\norm{k^V_x} \norm{V} + \norm{k^F_x} \frac{1}{A} \right) \norm{f},  \\
\end{eqnarray*}
where $A$ is the lower frame bound of $F(x)$.
This shows the other direction.

%
%

\end{proof}

Next we prove the continuous frame counterpart of  \cite[Theorem 6.3.7]{ole}.

\begin{thm} \label{olecontinuous}
Let $F$ be a continuous frame for $\h$. The dual frames of $F$ are precisely the functions
\begin{equation} \label{eq:classdual1}
G (x) = S^{-1}_{F} F(x) + \Theta (x)- \int \left< S^{-1} F(x) , F(y) \right> \Theta(y) d \mu (y) ,
\end{equation}
where $\Theta$ is a Bessel mapping.
\end{thm}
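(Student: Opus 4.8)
The plan is to follow the architecture of the discrete proof of \cite[Theorem 6.3.7]{ole}, assembling the two lemmas just established. By Lemma \ref{lem:dualcont1} the dual frames of $F$ are exactly the mappings $G(x) = \sum_{k}\overline{V^*(e_k)(x)}\,e_k$ attached to a bounded left-inverse $V$ of $T_F^*$ with $\kernel{V}^\bot$ a RKHS; equivalently such a $G$ is singled out by $T_G^* = V^*$. By Lemma \ref{lem:637cont} every such $V$ is of the form $V = S_F^{-1}T_F + W(\identity{\h} - T_F^* S_F^{-1}T_F)$ for a bounded $W$ with $\kernel{W}^\bot$ a RKHS. I would then identify $W$ with the synthesis operator of a Bessel mapping $\Theta$, writing $W = T_\Theta$, so that $(W^* f)(x) = \langle f, \Theta(x)\rangle$; this bounded-operator$\leftrightarrow$Bessel-mapping correspondence is the continuous analogue of the column-by-column description of $W$ used in the discrete setting.

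The heart of the argument is a direct computation of $G$ from $V$. Taking adjoints gives $V^* = T_F^* S_F^{-1} + (\identity{\h} - T_F^* S_F^{-1}T_F)W^*$, and since $T_G^* = V^*$ one has $\langle f, G(x)\rangle = (V^* f)(x)$ for every $f\in\h$. I would evaluate the three contributions pointwise: $(T_F^* S_F^{-1} f)(x) = \langle f, S_F^{-1}F(x)\rangle$, $(W^* f)(x) = \langle f, \Theta(x)\rangle$, and, with $\psi = W^* f$, $(T_F^* S_F^{-1}T_F \psi)(x) = \int \langle f,\Theta(y)\rangle\,\langle F(y), S_F^{-1}F(x)\rangle\, d\mu(y)$. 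Using $\overline{\langle S_F^{-1}F(x),F(y)\rangle} = \langle F(y),S_F^{-1}F(x)\rangle$, collecting the three terms and stripping off the outer $\langle f, \cdot\rangle$ reproduces precisely \eqref{eq:classdual1}; reading the equivalence in both directions yields the claimed set equality.

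The step I expect to be most delicate is reconciling the RKHS hypotheses of Lemmas \ref{lem:dualcont1}--\ref{lem:637cont} with the clean phrase ``where $\Theta$ is a Bessel mapping'', since in the continuous setting not every Bessel $\Theta$ has $\kernel{T_\Theta}^\bot = \overline{\range{T_\Theta^*}}$ a RKHS (a complete but non-frame Bessel mapping can have synthesis range dense in $L^2(X,\mu)$, which is not a RKHS). The reconciling observation is that only $W(\identity{\h} - T_F^* S_F^{-1}T_F)$, i.e. the restriction of $W$ to $\kernel{T_F}$, enters $V$; thus $W$, and hence $\Theta$, is pinned down only modulo its part living on $\range{T_F^*}$, and this freedom is exactly what preserves the RKHS property of $\kernel{V}^\bot$.

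To keep the conclusion independent of these technicalities I would also record the direct verification that bypasses the left-inverse bookkeeping. For sufficiency, for an arbitrary Bessel $\Theta$ the map $G$ in \eqref{eq:classdual1} is Bessel (a combination of Bessel mappings), and a Fubini computation together with the reconstruction identity $\int \langle h, F(x)\rangle S_F^{-1}F(x)\,d\mu(x) = h$ gives $\int \langle f,F(x)\rangle\langle G(x),g\rangle\,d\mu(x) = \langle f,g\rangle$, so $(F,G)$ is a dual pair in the sense of Definition \ref{def:dualframes}; the lower frame bound for $G$ follows from this duality and Cauchy--Schwarz, so $G$ is genuinely a frame. For necessity, given any dual frame $G$ I would simply set $\Theta = G$: the dual relation $\int \langle h,F(y)\rangle G(y)\,d\mu(y) = h$ evaluated at $h = S_F^{-1}F(x)$ collapses the correction integral in \eqref{eq:classdual1} to $S_F^{-1}F(x)$, so the formula returns $G(x)$ exactly.
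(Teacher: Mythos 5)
Your proposal is correct, and your ``direct verification'' route is a complete proof that genuinely differs from the paper's in the necessity direction. For sufficiency you and the paper argue identically: rewrite \eqref{eq:classdual1} as $G(x) = S_F^{-1}F(x) + \Theta(x) - T_\Theta T_F^* S_F^{-1}F(x)$, observe that $G$ is Bessel (a bounded operator applied to a Bessel mapping is Bessel) and that $T_G T_F^* = \identity{\h}$; you additionally make explicit the lower frame bound for $G$ via Cauchy--Schwarz, which the paper leaves implicit. For necessity, however, the paper does not use your $\Theta = G$ trick: it sets $V = T_{G_0}$, invokes Lemma \ref{lem:637cont} to write $V = S_F^{-1}T_F + W(\identity{\h} - T_F^* S_F^{-1}T_F)$, and then \emph{constructs} the Bessel mapping $\Theta(x) = \sum_k \overline{W^*(e_k)(x)}\, e_k$ from $W$, using the RKHS structure of $\kernel{W}^\bot$ and the evaluation functionals to prove $\langle f, \Theta(x)\rangle = ev_x\left(W^* f\right)$. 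That construction is exactly the bounded-operator$\leftrightarrow$Bessel-mapping correspondence you invoke without proof when you write $W = T_\Theta$; in the continuous setting this identification is the real content (elements of $L^2(X,\mu)$ have no pointwise values), so your lemma-based route, taken on its own, has a gap precisely there. Your fallback closes it cleanly: taking $\Theta = G$ and applying the duality relation at $h = S_F^{-1}F(x)$ collapses the correction integral to $S_F^{-1}F(x)$, so every dual frame is of the stated form --- a one-line, RKHS-free argument that also sidesteps the subtlety you correctly flag about Bessel mappings whose analysis range is not a RKHS. What the paper's longer route buys is the structural dictionary between dual frames, left inverses of $T_F^*$ with RKHS kernel complements, and Bessel mappings (Lemmas \ref{lem:dualcont1} and \ref{lem:637cont}), which the paper wants anyway for the discussion of non-simple tensor duals; what your route buys is brevity and independence from that machinery.
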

\begin{proof}
Equation \eqref{eq:classdual1} is equivalent to
$$ G (x) = {
\left( T_{\widetilde F} - T_\Theta \right) T_F^* S^{-1} F \left(x \right) + \Theta(x).
}
$$
By the construction it is a Bessel mapping as a sum of Bessel mappings. Note that a bounded operator applied to a Bessel mapping again gives a Bessel mapping.
Since $T_G T_F^* = \identity{\h}$, it is a dual frame.

On the other hand let $G_0$ be dual frame of $F$. Then $V = T_{G_0}$ is a left inverse of $T_F^*$, where $\kernel{V}^\bot$ is a RKHS.
By Lemma \ref{lem:637cont} it follows that
$$ V = S_F^{-1}T_F + W(I - T_F^* S_F^{-1} T_F),
$$
where $W$ is a bounded operator with $\kernel{W}^\bot$ being a RKHS. By Lemma \ref{lem:dualcont1} we have
$G(x)=\sum_{k \in K} \overline{V^* (e_k)(x)} e_k$.
Therefore
\begin{eqnarray*}
G(x) & =  \sum_{k \in K} \overline{T_F^* S_F^{-1} (e_k)(x)} e_k  & + \sum_{k \in K} \overline{W^* (e_k)(x)} e_k   \\
& & -  \sum_{k \in K} \overline{T_F^* S_F^{-1} T_F W^* (e_k)(x)} e_k \\
& =   \sum_{k \in K} \underbrace{\overline{T_F^* S_F^{-1} (e_k)(x)}}_{= \widetilde F (x)} e_k  & + \left(\identity{\h} - T_F^* S_F^{-1} T_F \right)
\sum_{k \in K} \underbrace{\overline{W^* (e_k)(x)} e_k}_{=: \Theta(x)}.
\end{eqnarray*}
The sequence  $W^* (e_k)$ is a Bessel sequence in the RKHS $\range{W^*} = \kernel{W}^\bot$ and so $\Theta(x)$ is well-defined. Furthermore
\begin{eqnarray*}
\left< f, \Theta(x) \right> = \left< f,   \sum_{k \in K} \overline{W^* (e_k)(x)} e_k \right> = \sum_{k \in K} W^* (e_k)(x) \left< f,    e_k \right> \\
= ev_x \left( \sum_{k \in K} W^* (e_k)  \left< f,    e_k \right> \right) =
ev_x \left( W^* \sum_{k \in K} \left< f,    e_k \right> e_k \right) = ev_x \left(W^* f \right).
\end{eqnarray*}
Therefore
$$
\int  |\langle f, \Theta(x) \rangle |^2 d\mu (x) = \norm{W^*f}_{L^2(X,\mu)} \le \norm{W}_{Op} \norm{f}_{L^2(X,\mu)}, $$
and $\Theta(x)$ is a continuous Bessel mapping.
%
%
%
%
\end{proof}

Like in the discrete setting \cite{zak} this can be reformulated as
\begin{cor}
Let $F$ be a continuous frame for $\h$. The dual frames of $F$ are precisely the functions
\begin{equation*} \label{eq:classdual1cor}
G (x) = S^{-1}_{F} F(x) + \Theta (x),
\end{equation*}
where $\Theta$ is a Bessel mapping with $\range{T_\Theta}^* \subseteq \kernel{T_F}$.

\end{cor}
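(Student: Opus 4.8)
The plan is to deduce this reformulation directly from Theorem \ref{olecontinuous} by recognizing the integral correction term as the action of a single bounded operator and absorbing it into the Bessel mapping. Throughout I use that a mapping $\Theta$ is Bessel precisely when its synthesis operator $T_\Theta$ is bounded, and I read the condition $\range{T_\Theta^*}\subseteq\kernel{T_F}$ as the operator identity $T_F T_\Theta^*=0$, equivalently (taking adjoints) $T_\Theta T_F^*=0$.

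First I would rewrite the correction term in \eqref{eq:classdual1} as an operator applied to $F(x)$. Since $(T_F^*h)(y)=\langle h,F(y)\rangle$ and $T_\Theta\varphi=\int\varphi(y)\Theta(y)\,d\mu(y)$, the identity $\langle S^{-1}F(x),F(y)\rangle=(T_F^*S^{-1}F(x))(y)$ gives
\[
\int \langle S^{-1}F(x),F(y)\rangle\,\Theta(y)\,d\mu(y)=T_\Theta T_F^* S^{-1}F(x).
\]
Hence the dual frame produced by $\Theta$ in Theorem \ref{olecontinuous} is $G=\widetilde F+\Psi$ with $\Psi(x):=\Theta(x)-T_\Theta T_F^*S^{-1}F(x)$. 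Taking synthesis operators and using $T_{\widetilde F}=S_F^{-1}T_F$ together with the fact that a bounded operator commutes with the (weak) frame integral, i.e. the synthesis operator of $x\mapsto \Xi F(x)$ is $\Xi T_F$, I obtain
\[
T_\Psi=T_\Theta\bigl(\identity{L^2(X,\mu)}-T_F^*S^{-1}T_F\bigr).
\]

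The key structural fact is that $P:=T_F^*S_F^{-1}T_F$ is the orthogonal projection of $L^2(X,\mu)$ onto $\range{T_F^*}$: it is bounded, self-adjoint because $S_F^{-1}$ is, idempotent because $T_FT_F^*=S_F$, and it fixes every $T_F^*f$. Consequently $\identity{L^2(X,\mu)}-P=\pi_{\kernel{T_F}}$ and $T_\Psi=T_\Theta\,\pi_{\kernel{T_F}}$. Since $\pi_{\kernel{T_F}}T_F^*=0$, this yields $T_\Psi T_F^*=0$, so $\Psi$ is a Bessel mapping with $\range{T_\Psi^*}\subseteq\kernel{T_F}$; thus every dual frame from Theorem \ref{olecontinuous} has the asserted form. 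For the converse inclusion, given any Bessel $\Psi$ with $T_\Psi T_F^*=0$ I would insert $\Theta=\Psi$ into \eqref{eq:classdual1}: the correction term equals $T_\Psi T_F^*S^{-1}F(x)=0$, so the formula collapses to $G=\widetilde F+\Psi=S_F^{-1}F+\Psi$. Since Theorem \ref{olecontinuous} already identifies its family with all dual frames, the two parametrizations describe the same set, proving the corollary.

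The only genuinely delicate points are the two operator-theoretic facts used above: that the bounded operator $\Xi=T_\Theta T_F^*S^{-1}$ may be pulled through the weak vector-valued integral to give $T_{\Xi F}=\Xi T_F$, and that $P$ is an orthogonal projection onto $\range{T_F^*}$. Both are routine once the weak interpretation of all integrals is fixed, so I expect the main work to be bookkeeping rather than hard analysis; a fully self-contained alternative would bypass Theorem \ref{olecontinuous} entirely by setting $\Psi=G-\widetilde F$ for a dual frame $G$ and checking $T_\Psi T_F^*=T_GT_F^*-T_{\widetilde F}T_F^*=\identity{\h}-\identity{\h}=0$ directly.
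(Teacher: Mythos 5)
Your proof is correct and follows exactly the route the paper intends: the paper states this corollary without proof, as a direct reformulation of Theorem \ref{olecontinuous} (mirroring the discrete case), and your argument supplies precisely that reformulation --- rewriting the correction term as $T_\Theta T_F^* S_F^{-1}F(x)$, observing that $T_F^* S_F^{-1} T_F$ is the orthogonal projection onto $\range{T_F^*}$ so that $\identity{L^2(X,\mu)} - T_F^* S_F^{-1} T_F$ projects onto $\kernel{T_F}$, and noting that the condition $\range{T_\Theta^*} \subseteq \kernel{T_F}$ is equivalent to $T_\Theta T_F^* = 0$, which collapses the formula of the theorem to the stated form. Your closing remark that the forward direction can also be checked directly via $T_\Psi T_F^* = T_G T_F^* - T_{\widetilde F} T_F^* = \identity{\h} - \identity{\h} = 0$ is likewise consistent with the paper's observation, after Definition \ref{def:dualframes}, that duality is the operator identity $T_G T_F^* = \identity{\h}$.
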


Adapting Theorem \ref{olecontinuous} to the tensor frame setting we reach the following:
\begin{cor}  \label{thm:dualframes}
Let $ F_1 \otimes F_2 $ be a frame for $\h$. Then the dual frames of $ F_1 \otimes F_2 $ are precisely the families of the form
\begin{align*}
S^{-1} _{F_1} F_1 (x_1) & \otimes S^{-1} _{F_2} F_2 (x_2)  + W (x_1, x_2)  \\
& - \int_X \langle S^{-1} _{F_1} F_1 (x_1), F (y_1)\rangle
\langle S^{-1} _{F_2} F_2 (x_2), F (y_2)\rangle  W(y_1, y_2) d\mu (y),
\end{align*}
where $ W $ is a Bessel mapping in $\h$.
\end{cor}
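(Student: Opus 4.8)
The plan is to specialize the general classification of Theorem \ref{olecontinuous} to the frame $F = F_1 \otimes F_2$ and then rewrite each of the three resulting terms by means of the tensor product identities from Theorem \ref{TF}. Since $F_1 \otimes F_2$ is a continuous frame for $\h = \h_1 \otimes \h_2$ by Theorem \ref{thm:main}, Theorem \ref{olecontinuous} applies directly and tells us that the dual frames of $F_1 \otimes F_2$ are exactly the mappings
$$
G(x) = S_F^{-1} F(x) + W(x) - \int_X \langle S_F^{-1} F(x), F(y)\rangle\, W(y)\, d\mu(y),
$$
where $W$ ranges over all Bessel mappings $X \to \h$ and $F = F_1 \otimes F_2$. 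Renaming the free Bessel mapping $\Theta$ of Theorem \ref{olecontinuous} as $W$ already reproduces the middle term of the claimed expression.

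Next I would evaluate the canonical dual term. Theorem \ref{TF} gives $S_F = S_{F_1} \otimes S_{F_2}$, hence $S_F^{-1} = S_{F_1}^{-1} \otimes S_{F_2}^{-1}$, so that
$$
S_F^{-1} F(x) = \left( S_{F_1}^{-1} \otimes S_{F_2}^{-1} \right)\left( F_1(x_1) \otimes F_2(x_2) \right) = S_{F_1}^{-1} F_1(x_1) \otimes S_{F_2}^{-1} F_2(x_2),
$$
which is precisely the first term in the statement. For the integral I would expand the scalar kernel using the tensor inner product \eqref{eq:tens innprod1}: writing $x = (x_1, x_2)$ and $y = (y_1, y_2)$,
$$
\langle S_F^{-1} F(x), F(y)\rangle = \langle S_{F_1}^{-1} F_1(x_1), F_1(y_1)\rangle_{\h_1}\, \langle S_{F_2}^{-1} F_2(x_2), F_2(y_2)\rangle_{\h_2},
$$
which is exactly the product appearing under the integral in the corollary. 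Substituting the three evaluated terms back gives the asserted formula.

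Finally, to justify the word \emph{precisely} I would check that the two parametrizations match: every Bessel mapping $W$ in $\h$ is an admissible choice for $\Theta$ in Theorem \ref{olecontinuous}, and conversely, so the family produced by letting $W$ run over all Bessel mappings in $\h$ coincides with the full set of dual frames of $F_1 \otimes F_2$. I expect no genuine obstacle here beyond bookkeeping the tensor inner product; the only point worth emphasizing is that $\Theta$ (equivalently $W$) is completely free as a Bessel mapping on the Hilbert space $\h = \h_1 \otimes \h_2$ and in particular need not be a simple tensor, which is exactly what makes this corollary the tool needed to produce the non-simple duals of Lemma \ref{thm:nontensordual}.
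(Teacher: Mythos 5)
Your proposal is correct and follows essentially the same route as the paper, which states Corollary \ref{thm:dualframes} simply as an adaptation of Theorem \ref{olecontinuous} to the tensor setting, implicitly using $S_{F_1 \otimes F_2}^{-1} = S_{F_1}^{-1} \otimes S_{F_2}^{-1}$ from Theorem \ref{TF} and the factorization of the inner product \eqref{eq:tens innprod1}, exactly as you spell out. Your closing remark that the free Bessel mapping $W$ need not be a simple tensor (and your reading of the paper's $F(y_1)$, $F(y_2)$ as $F_1(y_1)$, $F_2(y_2)$) is also consistent with the paper's intent.
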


Now let us come back to Lemma \ref{thm:nontensordual}: To show that there are non-simple dual tensor frames, one has to find a non-simple Bessel mapping $W$ such that the result is also non-simple. This can be done as in the case study of the proof of \cite[Lemma 2.2]{WangLi}.

\section{Tensor Product Continuous Frame Multipliers} \label{sec:multipliers}

Gabor multipliers \cite{feic} led to the introduction of
Bessel and frame multipliers for abstract Hilbert spaces. These operators are
defined by a fixed multiplication pattern (the symbol) which is
inserted between the analysis and synthesis operators
\cite{xxlmult1,peter2,peter3}.
This section is inspired by the continuous frame multipliers studied in \cite{BBR}.
We are interested in the tensor product setting as follows.

\begin{defn}\label{definitioncontframemult}
Let $\h$ be the tensor product $\h = \h_1 \otimes \h_2 $ of complex Hilbert spaces, and $(X, \mu) = ( X_1 \times X_2, \mu_1 \otimes \mu_2)$ be the
product of measure spaces with $\sigma-$finite positive measures $\mu_1, \mu_2.$
Also, let  $F $ and $ G $  be Bessel mappings for $\h$ with respect to $(X,\mu)$ and $m:X\rightarrow \mathbb{C}$ be a
measurable function. The operator
$\textbf{M}_{m,F,G}:\h\rightarrow\h$ weakly defined by
\begin{equation}
\label{de:operatorM}
\langle \textbf{M}_{m,F,G} \vec{f}, \vec{g}\rangle =
 \int_{X} m(x) \langle \vec{f}, F (x) \rangle
\langle G (x),\vec{g} \rangle d\mu(x),
\end{equation}
for all $\vec{f}, \vec{g} \in\h$, is called  {\em tensor product continuous Bessel multiplier} of $F$
and $G$ with respect to the  {\em symbol} $m$. If, in addition, $F$ and $G$ are continuous frames, then $\textbf{M}_{m,F,G}$
given by \eqref{de:operatorM} is called {\em tensor product continuous frame multiplier}.
\end{defn}

Eq.\eqref{de:operatorM} is equivalent to the weak formulation of
\begin{equation}
\label{de:operatorMweak}
\textbf{M}_{m,F,G} \vec{f}:=\int_{X}m(x)\langle \vec{f}, F(x)\rangle G(x)d\mu(x).
\end{equation}


\begin{rem}
If $ m \equiv 1$ and  $F $ and $ G $ are Bessel mappings for $\h$ with respect to $(X,\mu)$, then $\textbf{M}_{1,F,G}$
given by \eqref{de:operatorM} is a well-defined and bounded sesquilinear form on $\h$, which could be called the {\em cross-frame operator}.

If, in addition, the corresponding operator given  by
\eqref{de:operatorMweak} has a bounded inverse, then $(F,G)$ is a reproducing pair for $\h$ in the sense of \cite{spexxl16}
(when the definition of reproducing pairs is suitably interpreted for tensor product of Hilbert spaces).

If $(F,G)$ is a dual pair of continuous frames (cf. Definition \ref{def:dualframes}), then  $\textbf{M}_{1,F,G}$ given by
\eqref{de:operatorMweak} is the identity operator (and vice-versa, as we have assumed the Bessel property).
\end{rem}

If $\textbf{M}_{m,F,G} $ is given by \eqref{de:operatorM}, then it immediately follows that $(\mathbf{M}_{m,F,G})^*=\mathbf{M}_{\overline{m},G,F}$, cf. \cite[Proposition 3.4]{BBR}.

\par

\begin{lem}\label{tar}
Let $F$ and $G$ be as in Definition \ref{definitioncontframemult}, with the Bessel bounds $B_F$ and $B_G$ respectively. If $m\in
L^{\infty}(X,\mu)$, then the continuous tensor product Bessel multiplier
$\textbf{M}_{m,F,G}$  given by \eqref{de:operatorM}
is well defined and bounded with
\[
\|\textbf{M}_{m,F,G}\|\leq \|m\|_\infty \sqrt{B_F B_G}.
\]
\end{lem}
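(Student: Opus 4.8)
The plan is to estimate the bounded sesquilinear form defining $\textbf{M}_{m,F,G}$ in \eqref{de:operatorM} directly, and then invoke the correspondence \eqref{murphy} between bounded sesquilinear forms and bounded operators. First I would bound the integrand pointwise using $|m(x)| \le \|m\|_\infty$ a.e., obtaining
\[
|\langle \textbf{M}_{m,F,G}\vec{f},\vec{g}\rangle| \le \|m\|_\infty \int_X |\langle \vec{f}, F(x)\rangle|\,|\langle G(x),\vec{g}\rangle|\,d\mu(x).
\]
Then I would apply the Cauchy--Schwarz inequality in $L^2(X,\mu)$ to separate the two families of analysis coefficients:
\[
\int_X |\langle \vec{f}, F(x)\rangle|\,|\langle G(x),\vec{g}\rangle|\,d\mu(x) \le \left(\int_X |\langle \vec{f}, F(x)\rangle|^2\, d\mu\right)^{1/2}\left(\int_X |\langle G(x),\vec{g}\rangle|^2\, d\mu\right)^{1/2}.
\]

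Next I would identify the two factors with the $L^2$-norms of the analysis operators $T_F^*\vec{f}$ and $T_G^*\vec{g}$, using $|\langle G(x),\vec{g}\rangle| = |\langle \vec{g}, G(x)\rangle|$ together with \eqref{eq:adjointsynthesisopr}, and then invoke the Bessel property: the first factor is bounded by $\sqrt{B_F}\,\|\vec{f}\|$ and the second by $\sqrt{B_G}\,\|\vec{g}\|$. Combining the three estimates yields
\[
|\langle \textbf{M}_{m,F,G}\vec{f},\vec{g}\rangle| \le \|m\|_\infty \sqrt{B_F B_G}\,\|\vec{f}\|\,\|\vec{g}\|,
\]
which in particular shows that the defining integral converges absolutely, so the form is well defined and bounded with bound at most $\|m\|_\infty \sqrt{B_F B_G}$. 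Since the map $(\vec{f},\vec{g}) \mapsto \langle \textbf{M}_{m,F,G}\vec{f},\vec{g}\rangle$ is manifestly linear in the first and conjugate-linear in the second argument, \eqref{murphy} provides a unique operator representing it whose operator norm equals the bound of the form, giving the claimed inequality.

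I do not expect a genuine obstacle here: sesquilinearity of the form is immediate from its definition, and the only point requiring minor care is the well-definedness, i.e. the absolute convergence of the defining integral, which is exactly what the Cauchy--Schwarz estimate delivers before any operator is constructed. Note also that the argument nowhere uses the tensor product structure of $\h$, so it proceeds identically to the single Hilbert space case of \cite{BBR}; the tensor product enters only through the fact that $F$ and $G$ are Bessel mappings for $\h = \h_1 \otimes \h_2$, which is all the estimate requires.
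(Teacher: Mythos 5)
Your proof is correct and follows exactly the route the paper intends: the paper omits the argument, referring to the proof of \cite[Lemma 3.3]{BBR}, which is precisely this standard estimate (pointwise bound by $\|m\|_\infty$, Cauchy--Schwarz in $L^2(X,\mu)$, the two Bessel bounds, then the correspondence \eqref{murphy} between bounded sesquilinear forms and bounded operators). Your closing remark that the tensor product structure plays no role is also exactly why the paper felt justified in omitting the proof.
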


\begin{proof} The proof is a modification of the proof of \cite[Lemma 3.3]{BBR} for the  case of tensor products, and is therefore omitted.
\end{proof}

Here and in what follows the norm in Lebesgue spaces $ L^{p}(X,\mu)$, $1\leq p\leq \infty $ is denoted by
$\| \cdot \|_p $. As usual, we shorten notation by setting $ \| \cdot \| = \| \cdot \|_2.$

If $m(x)>0$ a.e., then for any Bessel
mapping $F$ the multiplier $\textbf{M}_{m,F,F}$ is a positive
operator, and if $m(x)\geq \delta > 0$ almost everywhere for some positive constant $\delta$ and
$\|m\|_{\infty}<\infty$ then $\textbf{M}_{m,F,F}$ is just the
frame operator of $\sqrt{m}F$ and so it is positive, self-adjoint
and invertible, cf. \cite{BBR}.

\par

By using  analysis and synthesis operators for $F$ and $G$, it is  easy to  see that
\begin{equation}
\label{rep1}\textbf{M}_{m,F,G}=T_G \circ D_m \circ T^*_F
\end{equation}
where $D_m:L^{2}(X,\mu)\rightarrow L^{2}(X,\mu) $ is given by
$(D_m \varphi)(x)=m(x)\varphi(x)$. If $m\in L^{\infty}(X,\mu)$, then $D_m$ is bounded and
$\|D_m\|=\|m\|_{\infty}$, \cite{conw1}.

\par

If  $m\in L^{\infty}(\RR^d, dx) $, then  \cite[Proposition 3.6]{BBR} implies that
the multiplication operator $D_m$ on $ L^2(\RR^d, dx)$ (with $dx$ denoting the Lebesgue measure)
is compact if and only if $m \equiv 0$. This
constitutes an important difference between the discrete and the continuous case, see \cite{peter2}.
To prove sufficient conditions for compactness of tensor product
continuous frame multipliers a different approach than in the discrete setting has to be taken.
We closely follow the approach suggested in \cite{BBR}.

\vspace{5mm}

\subsection{Compact Multipliers}

Recall, a mapping $F$ is called norm bounded on $( X, \mu)$ if  there exists a constant $C > 0$ such that $\| F(x) \| \le C$ for almost every $x \in X$.
Furthermore, the  support of  measurable function $m:X\rightarrow \mathbb{C}$ is of a finite measure if
there exists a subset $K \subseteq X$ with $\mu(K) < \infty$ such that $m(x) = 0$ for almost every $x \in X \setminus K$.

We can formulate \cite[Theorem 3.7]{BBR} in the tensor product setting:
\begin{thm} \label{Th:compact-01}
Let $F$ and $G$ be as in Definition \ref{definitioncontframemult}, and let either $F$ or $G$ be norm bounded. If $m: X\rightarrow \mathbb{C}$ is a
(essentially) bounded measurable function with support of finite measure, then $\textbf{M}_{m,F,G}$ given by \eqref{de:operatorM} is a compact
operator.
\end{thm}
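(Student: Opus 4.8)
The plan is to exploit the factorization \eqref{rep1}, $\textbf{M}_{m,F,G}=T_G\circ D_m\circ T_F^*$, and to show that the finite-measure support of $m$ together with the norm-boundedness hypothesis forces one of the outer factors to be Hilbert--Schmidt; since the Hilbert--Schmidt operators form a two-sided ideal inside the bounded operators, the whole composition is then Hilbert--Schmidt, and in particular compact. By the relation $(\textbf{M}_{m,F,G})^*=\textbf{M}_{\overline m,G,F}$ recorded before Lemma \ref{tar}, and since an operator is compact if and only if its adjoint is, I may assume without loss of generality that it is $G$ that is norm bounded, say $\|G(x)\|\le C$ for a.e. $x$; the case where $F$ is norm bounded follows by passing to the adjoint (note that $\overline m$ has the same support as $m$).

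Let $K\subseteq X$ be the set of finite measure outside of which $m$ vanishes a.e., and let $\chi_K$ denote its indicator. Because $m=m\,\chi_K$ a.e., the multiplication operator splits as $D_m=M_{\chi_K}\,D_m$, so that
\[
\textbf{M}_{m,F,G}= \big(T_G\,M_{\chi_K}\big)\,D_m\,T_F^* =: \Psi\,D_m\,T_F^*.
\]
The claim is that $\Psi=T_G M_{\chi_K}\colon L^2(X,\mu)\to\h$ is Hilbert--Schmidt. Fixing an ONB $(e_n)$ of $\h$, its adjoint acts by $(\Psi^* f)(x)=\chi_K(x)\langle f,G(x)\rangle$, and a monotone-convergence interchange of sum and integral gives
\[
\|\Psi\|_{\h S}^2=\sum_{n}\int_X \chi_K(x)\,|\langle e_n,G(x)\rangle|^2\,d\mu(x)
=\int_K \|G(x)\|^2\,d\mu(x)\le C^2\,\mu(K)<\infty.
\]
Here the integrand $\|G(x)\|^2=\sum_n|\langle e_n,G(x)\rangle|^2$ is measurable by the weak measurability of $G$, and the final bound uses the norm-boundedness of $G$ and $\mu(K)<\infty$.

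It remains to assemble the pieces. The operators $T_F^*$ and $D_m$ are bounded (by the Bessel property of $F$ and by $m\in L^\infty$, as recorded after \eqref{rep1}), so $\textbf{M}_{m,F,G}=\Psi\,D_m\,T_F^*$ is the composition of the Hilbert--Schmidt operator $\Psi$ with bounded operators, hence itself Hilbert--Schmidt and therefore compact. I expect the only genuine subtleties to be bookkeeping ones: justifying the sum--integral interchange (Tonelli, valid since the terms are nonnegative) and checking that $\Psi=T_G M_{\chi_K}$ is a well-defined bounded operator, which is immediate from the Bessel property of $G$. It is worth stressing that this argument nowhere uses the tensor-product structure $\h=\h_1\otimes\h_2$ of the ambient space---only that $F,G$ are Bessel mappings into a separable Hilbert space with one of them norm bounded---so the proof is a verbatim transcription of that of \cite[Theorem 3.7]{BBR}, and in fact yields the slightly stronger conclusion that $\textbf{M}_{m,F,G}\in\mathcal S_2(\h)$.
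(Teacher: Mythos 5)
Your proof is correct, and it is essentially the paper's own argument: the paper proves Theorem \ref{Th:compact-01} simply by invoking \cite[Theorem 3.7]{BBR}, whose proof is exactly your factorization $\textbf{M}_{m,F,G}=T_G\,M_{\chi_K}\,D_m\,T_F^*$ with the cut-down synthesis operator shown to be Hilbert--Schmidt via $\int_K\|G(x)\|^2\,d\mu\le C^2\mu(K)$, the adjoint relation handling the case where $F$ rather than $G$ is norm bounded. Your closing observations --- that the tensor structure plays no role and that one in fact gets $\textbf{M}_{m,F,G}\in\mathcal{S}_2(\h)$ --- are also accurate and consistent with that argument.
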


The conclusion of  Theorem \ref{Th:compact-01} remains the same if, instead of having the support of finite measure,
we assume that $m: X \to \mathbb{C}$ vanishes at infinity, i.e. for every $\varepsilon > 0$ there is a set of finite measure $K = K(\varepsilon)
\subseteq X$, $\mu(K) < \infty$, such that $m(x)\le \varepsilon$ for almost every $x \in X \setminus K$ (cf.  \cite[Corollary 3.8]{BBR}).

If, in addition, we assume that \emph{both} $F$ and $G$ are norm bounded, then we have the following trace class, and Schatten $p-$class result which is a reformulation of \cite[Theorems 3.10 and 3.11]{BBR} to our setting:
\begin{thm}  \label{sec:schatten1}
Let $F$ and $G$ be as in Definition \ref{definitioncontframemult} which are norm
bounded  with norm bounds $L_F$ and $L_G$, respectively. Then the following is true:

\begin{enumerate}
\item If $m \in L^1(X,\mu)$, then
$\textbf{M}_{m,F,G}$ is  a trace class operator with the trace norm estimate given by
$$\| \textbf{M}_{m,F,G} \|_{\mathcal{S}_1} \le \|m\|_{1}L_F L_G.$$
\item  If $m \in L^p(X,\mu)$, $1 < p < \infty$,
then $\textbf{M}_{m,F,G}$ belongs to the Schatten p-class $\mathcal{S}_p(\h)$, with norm estimate
\[
\| \textbf{M}_{m,F,G} \|_{\mathcal{S}_p} \leq \| m \|_p \left( L_F L_G \right)^{\frac{1}{p}}\left(B_F B_G \right)^{\frac{p-1}{2p}}.
\]
\end{enumerate}
\end{thm}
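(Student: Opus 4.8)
The plan is to realize $\textbf{M}_{m,F,G}$ as a Bochner integral of rank-one operators, obtain the trace-class bound directly, and then derive the Schatten-$p$ estimate by complex interpolation against the operator-norm bound of Lemma \ref{tar}. Note that the tensor product structure of $\h = \h_1 \otimes \h_2$ plays no role in either estimate, since $F$ and $G$ are treated simply as Bessel mappings into the single Hilbert space $\h$; thus the argument is exactly that of the single-space results in \cite{BBR}.

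For part (1) I would introduce, for each $x \in X$, the rank-one operator $R(x) := G(x) \otimes F(x) \in \mathcal{B}(\h)$ acting by $R(x)\vec{f} = \langle \vec{f}, F(x) \rangle G(x)$, so that the weak definition \eqref{de:operatorM} becomes $\textbf{M}_{m,F,G} = \int_X m(x) R(x)\, d\mu(x)$. Since $R(x)$ has rank one, $\| R(x) \|_{\mathcal{S}_1} = \|F(x)\|\,\|G(x)\| \le L_F L_G$ for almost every $x$, whence $\int_X |m(x)|\,\|R(x)\|_{\mathcal{S}_1}\, d\mu(x) \le L_F L_G \|m\|_1 < \infty$. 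The one genuine point to check is measurability of the $\mathcal{S}_1$-valued integrand: for every $A \in \mathcal{B}(\h) = \mathcal{S}_1(\h)^*$ one has $\mathrm{tr}(R(x)A) = \langle A G(x), F(x) \rangle$, which is measurable in $x$ by weak measurability of $F$ and $G$ (expand against an orthonormal basis), so $x \mapsto m(x)R(x)$ is weakly measurable; as $\h$ is separable, $\mathcal{S}_1(\h)$ is separable and Pettis' theorem upgrades this to Bochner measurability. The Bochner integral then converges in $\mathcal{S}_1(\h)$, agrees weakly with \eqref{de:operatorM}, and gives $\|\textbf{M}_{m,F,G}\|_{\mathcal{S}_1} \le \int_X |m(x)|\,\|R(x)\|_{\mathcal{S}_1}\, d\mu(x) \le \|m\|_1 L_F L_G$.

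For part (2) I would interpolate the linear map $\Phi \colon m \mapsto \textbf{M}_{m,F,G}$. Part (1) shows $\Phi \colon L^1(X,\mu) \to \mathcal{S}_1(\h)$ is bounded with norm at most $L_F L_G$, while Lemma \ref{tar} shows $\Phi \colon L^\infty(X,\mu) \to \mathcal{B}(\h) = \mathcal{S}_\infty(\h)$ is bounded with norm at most $\sqrt{B_F B_G}$. Using the complex interpolation identities $[L^1, L^\infty]_\theta = L^p$ and $[\mathcal{S}_1, \mathcal{S}_\infty]_\theta = \mathcal{S}_p$ with $1/p = 1 - \theta$, that is $\theta = (p-1)/p$, the Riesz--Thorin/Stein interpolation theorem yields $\Phi \colon L^p \to \mathcal{S}_p$ bounded with norm at most $(L_F L_G)^{1-\theta}(\sqrt{B_F B_G})^{\theta}$. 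Substituting $1-\theta = 1/p$ and $\theta = (p-1)/p$ gives exactly $\|\textbf{M}_{m,F,G}\|_{\mathcal{S}_p} \le \|m\|_p (L_F L_G)^{1/p}(B_F B_G)^{(p-1)/(2p)}$; since $\mu$ is $\sigma$-finite one first establishes this on the dense subspace $L^1 \cap L^\infty$ and extends by density.

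The rank-one trace-norm identity and the arithmetic of the exponents are routine. The only real care is needed in justifying the Bochner-integral representation of part (1)—specifically the passage from weak to Bochner measurability via separability of $\mathcal{S}_1(\h)$ and Pettis' theorem—and in confirming that $\Phi$ is admissible for complex interpolation, which it is, being a single linear map independent of the interpolation parameter. I expect the formal justification of the $\mathcal{S}_1$-valued integral to be the main (and essentially the only) obstacle; once it is in place, both estimates follow mechanically.
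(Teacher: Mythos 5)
Your proof is correct, and at the top level it has the same architecture as the one the paper relies on: the paper omits the argument, deferring to \cite[Theorems 3.10 and 3.11]{BBR}, and there, too, the Schatten-$p$ estimate comes from complex interpolation of the fixed linear map $m \mapsto \textbf{M}_{m,F,G}$ between the endpoint bounds $L^1 \to \mathcal{S}_1$ (norm $\le L_F L_G$) and $L^\infty \to \mathcal{B}(\h)$ (norm $\le \sqrt{B_F B_G}$, Lemma \ref{tar}), with exactly your exponent bookkeeping; your remark that the tensor structure of $\h$ plays no role is precisely why the paper can dismiss the proof as a slight modification of the single-space case. The genuine difference is at the $L^1$ endpoint. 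The argument in \cite{BBR} is scalar: it bounds $\sum_k |\langle \textbf{M}_{m,F,G}\, e_k, h_k \rangle|$ over orthonormal systems $(e_k)$, $(h_k)$, interchanging sum and integral by Tonelli and applying Cauchy--Schwarz together with Bessel's inequality $\sum_k |\langle e_k, F(x)\rangle|^2 \le \|F(x)\|^2 \le L_F^2$; no vector-valued integration appears. You instead realize $\textbf{M}_{m,F,G}$ as an $\mathcal{S}_1$-valued Bochner integral of the rank-one field $m(x)\, G(x) \otimes F(x)$, which obliges you to verify strong measurability --- and your verification is sound: $\operatorname{tr}(R(x)A) = \langle A G(x), F(x)\rangle$ is measurable by ONB expansion, $\mathcal{S}_1(\h)$ is separable because $\h$ is, and Pettis' theorem applies. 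What each buys: the estimate in \cite{BBR} is more elementary, needing no vector-valued measure theory; your Bochner representation is a strictly stronger statement than the norm bound alone, exhibiting the multiplier as a norm-convergent $\mathcal{S}_1$-valued integral and giving, for instance, continuity of $m \mapsto \textbf{M}_{m,F,G}$ from $L^1$ to $\mathcal{S}_1$ by construction. Both routes yield identical constants, and your interpolation step --- including the consistency of the two endpoint definitions on $L^1 \cap L^\infty$ --- is correct.
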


We omit the proof since it follows by  slight modifications of the proofs of  \cite[Theorems 3.10 and 3.11]{BBR}.

Recall, if $A\in \mathcal{S}_1(\h)$, then its trace is defined to be
$$
\text{Tr}_{\h} (A) = \sum_n \langle A e_n, e_n \rangle,
$$
for any ONB in $\h$. We have $ |\text{Tr}_{\h} (A)| \leq \| A \|_{\mathcal{S}_1} $,  with the equality if $A$ is a positive operator.

For tensor product Hilbert spaces $ \h = \h_1 \otimes \h_2 $, the following  partial trace theorem holds.

\begin{thm} \label{th:partialtrace}
Let $\h$ be a tensor product Hilbert space $ \h = \h_1 \otimes \h_2 $, and let $A\in \mathcal{S}_1(\h)$.
Then there is a continuous and linear map
\begin{equation} \label{eq:parttracemap}
T: \mathcal{S}_1(\h) \rightarrow \mathcal{S}_1(\h_1)
\end{equation}
such that the following properties hold:
\begin{equation} \label{eq:parttraceformula}
T(A_1 \otimes A_2) = A_1 \text{{\em Tr}}_{\h_2} (A_2), \qquad \forall A_j \in \mathcal{S}_1(\h_j), \;\;\; j=1,2,
\end{equation}
\begin{equation} \label{eq:parttraceproprerty}
\text{{\em Tr}}_{\h_1} (T(A)) = \text{{\em Tr}}_{\h} (A), \qquad \forall A \in \mathcal{S}_1(\h).
\end{equation}

\end{thm}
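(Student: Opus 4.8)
The plan is to construct the map $T$ explicitly by fixing an orthonormal basis $(e_n)$ of $\h_2$ and, for $A \in \mathcal{S}_1(\h)$, defining the sesquilinear form on $\h_1$
\[
\Psi_A(x,y) = \sum_{n} \langle A(x \otimes e_n), y \otimes e_n\rangle, \qquad x,y \in \h_1 .
\]
The operator $T(A)$ will be the unique bounded operator on $\h_1$ representing $\Psi_A$ in the sense of \eqref{murphy}, and the two stated identities will then be checked directly. (Existence of a map with properties \eqref{eq:parttraceformula}--\eqref{eq:parttraceproprerty} is all that is required, so independence of the chosen basis need not be addressed.)

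First I would show that $\Psi_A$ is bounded with $\abs{\Psi_A(x,y)} \le \norm{A}_{\mathcal{S}_1}\norm{x}\norm{y}$. Writing the singular value decomposition $A=\sum_k s_k\langle\cdot,u_k\rangle v_k$ with orthonormal $(u_k),(v_k)$ and $\sum_k s_k=\norm{A}_{\mathcal{S}_1}$, one expands $\Psi_A(x,y)=\sum_k s_k\sum_n\langle x\otimes e_n,u_k\rangle\langle v_k,y\otimes e_n\rangle$ and applies Cauchy--Schwarz in $n$. The key point is that by the tensor inner product \eqref{eq:tens innprod1} the family $(x\otimes e_n/\norm{x})_n$ is orthonormal in $\h$, so Bessel's inequality gives $\sum_n\abs{\langle x\otimes e_n,u_k\rangle}^2\le\norm{x}^2$ and likewise for $v_k,y$; summing the $s_k$ yields the bound. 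By \eqref{murphy} this produces $T(A)\in\mathcal{B}(\h_1)$ with $\langle T(A)x,y\rangle=\Psi_A(x,y)$, and linearity of $A\mapsto T(A)$ is immediate.

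The main obstacle, and the crux of the argument, is upgrading $T(A)$ from merely bounded to trace class with continuous dependence on $A$. Here I would invoke the standard characterization $\norm{B}_{\mathcal{S}_1}=\sup\sum_m\abs{\langle B\psi_m,\chi_m\rangle}$, the supremum being over all pairs of orthonormal systems $(\psi_m),(\chi_m)$ in $\h_1$. For any such pair, the families $(\psi_m\otimes e_n)_{m,n}$ and $(\chi_m\otimes e_n)_{m,n}$ are orthonormal systems in $\h$, so
\[
\sum_m\abs{\langle T(A)\psi_m,\chi_m\rangle}\le\sum_{m,n}\abs{\langle A(\psi_m\otimes e_n),\chi_m\otimes e_n\rangle}\le\norm{A}_{\mathcal{S}_1}.
\]
Taking the supremum shows $T(A)\in\mathcal{S}_1(\h_1)$ with $\norm{T(A)}_{\mathcal{S}_1}\le\norm{A}_{\mathcal{S}_1}$, which is precisely the continuity asserted in \eqref{eq:parttracemap}.

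It then remains to verify the two properties. For \eqref{eq:parttraceformula}, on a simple tensor of operators Lemma \ref{lm:tensorprodprop} gives $(A_1\otimes A_2)(x\otimes e_n)=A_1 x\otimes A_2 e_n$, hence $\Psi_{A_1\otimes A_2}(x,y)=\langle A_1 x,y\rangle\sum_n\langle A_2 e_n,e_n\rangle=\langle A_1 x,y\rangle\,\text{Tr}_{\h_2}(A_2)$, so that $T(A_1\otimes A_2)=A_1\,\text{Tr}_{\h_2}(A_2)$. For \eqref{eq:parttraceproprerty}, choosing an ONB $(\psi_m)$ of $\h_1$ and using that $(\psi_m\otimes e_n)_{m,n}$ is an ONB of $\h$ by Lemma \ref{lm:tensorprodprop},
\[
\text{Tr}_{\h_1}(T(A))=\sum_m\Psi_A(\psi_m,\psi_m)=\sum_{m,n}\langle A(\psi_m\otimes e_n),\psi_m\otimes e_n\rangle=\text{Tr}_{\h}(A),
\]
the rearrangement being legitimate by the absolute convergence guaranteed by $A\in\mathcal{S}_1(\h)$. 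This completes the construction and verification.
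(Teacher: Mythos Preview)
Your argument is correct. The paper does not actually prove this theorem: it simply states that the proof is contained in \cite[Theorem 26.7]{BB} and omits it. You instead supply a self-contained construction via the sesquilinear form $\Psi_A(x,y)=\sum_n\langle A(x\otimes e_n),y\otimes e_n\rangle$, invoke the representation \eqref{murphy} to produce $T(A)$, and then use the variational characterisation $\norm{B}_{\mathcal{S}_1}=\sup\sum_m\abs{\langle B\psi_m,\chi_m\rangle}$ together with the fact that $(\psi_m\otimes e_n)_{m,n}$ is orthonormal to obtain the trace-class bound. This is the standard explicit route (and in fact essentially what the cited reference does), so your write-up is a genuine improvement in self-containment over the paper, with no gaps: the Fubini-type rearrangements are all justified by the absolute summability you establish, and the verifications of \eqref{eq:parttraceformula} and \eqref{eq:parttraceproprerty} are straightforward once $T$ is in hand.
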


Proof of Theorem \ref{th:partialtrace} is contained in the proof of  \cite[Theorem 26.7]{BB}, and therefore omitted.

\par

If $T$ is the  mapping given by \eqref{eq:parttracemap}, then
$T(A)$ is called the {\em partial trace} of $A$ with respect to $\h_1$. In a similar way we may define the partial trace of $A$ with respect to
$\h_2 $.

\par

In Section \ref{sec:quantum} we will use the following simple consequence of Definition \ref{definitioncontframemult} and   Theorem \ref{th:partialtrace}.

\begin{cor}\label{cor:partialtrace}
Let  $m_j $ be measurable functions on $X_j$, let $F_j$ and $G_j$ be continuous Bessel mappings (frames) for $\h_j$, $ j=1,2$, and
let $m= m_1 \otimes m_2 $, $ F = F_1 \otimes F_2 $, and  $ G = G_1 \otimes G_2 $. If
$ \textbf{M}_{m,F,G}  \in  \mathcal{S}_1(\h_1 \otimes \h_2)$,
then its partial trace $T (\textbf{M}_{m,F,G} )$  with respect to $\h_1$ is a continuous Bessel (frame) multiplier given by
$$
T (\textbf{M}_{m_1,F_1,G_1}  \otimes \textbf{M}_{m_2,F_2,G_2} ) = \textbf{M}_{m_1,F_1,G_1}  \text{{\em Tr}}_{\h_2} (\textbf{M}_{m_2,F_2,G_2} ),
$$
i.e. it is a trace class operator of "the same form" as
$ \textbf{M}_{m,F,G} $.


Similar holds for the  partial trace of  $\textbf{M}_{m,F,G}$  with respect to $\h_2$.
\end{cor}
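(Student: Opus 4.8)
The plan is to show that the hypotheses $m = m_1 \otimes m_2$, $F = F_1 \otimes F_2$, $G = G_1 \otimes G_2$ force the multiplier to factor as a tensor product of the component multipliers,
\[
\textbf{M}_{m,F,G} = \textbf{M}_{m_1,F_1,G_1} \otimes \textbf{M}_{m_2,F_2,G_2},
\]
and then to apply the partial trace formula \eqref{eq:parttraceformula} directly. First I would verify the factorization weakly on simple tensors. Taking $\vec{f} = f_1 \otimes f_2$ and $\vec{g} = g_1 \otimes g_2$ in the defining relation \eqref{de:operatorM} and using \eqref{eq:tens innprod1} to split the inner products as $\langle \vec{f}, F(x)\rangle = \langle f_1, F_1(x_1)\rangle \langle f_2, F_2(x_2)\rangle$ and likewise for $\langle G(x), \vec{g}\rangle$, together with $m(x) = m_1(x_1)m_2(x_2)$, Fubini's theorem separates the integral over $X_1 \times X_2$ into the product $\langle \textbf{M}_{m_1,F_1,G_1}f_1, g_1\rangle\,\langle \textbf{M}_{m_2,F_2,G_2}f_2, g_2\rangle$. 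By Lemma \ref{lm:tensorprodprop} b) this is exactly the matrix element of $\textbf{M}_{m_1,F_1,G_1} \otimes \textbf{M}_{m_2,F_2,G_2}$ at $f_1 \otimes f_2$. Since both sides are bounded and agree on the dense span of simple tensors, the identity extends to all of $\h$ by \cite[Proposition 2.5]{RaNaDe}, exactly as in the proof of Theorem \ref{TF}.

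Once the factorization holds, the proof is essentially immediate. The hypothesis $\textbf{M}_{m,F,G} \in \mathcal{S}_1(\h_1 \otimes \h_2)$ combined with the multiplicativity of the trace norm under tensor products guarantees that each factor is trace class, so that $\text{Tr}_{\h_2}(\textbf{M}_{m_2,F_2,G_2})$ is well defined. Applying the partial trace $T$ of Theorem \ref{th:partialtrace} and invoking \eqref{eq:parttraceformula} with $A_1 = \textbf{M}_{m_1,F_1,G_1}$ and $A_2 = \textbf{M}_{m_2,F_2,G_2}$ yields
\[
T(\textbf{M}_{m,F,G}) = \textbf{M}_{m_1,F_1,G_1}\,\text{Tr}_{\h_2}(\textbf{M}_{m_2,F_2,G_2}).
\]
To justify that this is of ``the same form'', I would observe that $\text{Tr}_{\h_2}(\textbf{M}_{m_2,F_2,G_2})$ is a scalar $c \in \Comp$, and that rescaling the symbol gives $c\,\textbf{M}_{m_1,F_1,G_1} = \textbf{M}_{c\,m_1, F_1, G_1}$; hence the partial trace is again a continuous Bessel (frame) multiplier on $\h_1$, now with symbol $c\,m_1$. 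Expanding the trace against an orthonormal basis of $\h_2$ and using completeness then gives the explicit value $c = \int_{X_2} m_2(x_2)\langle G_2(x_2), F_2(x_2)\rangle\,d\mu_2(x_2)$. The statement for the partial trace with respect to $\h_2$ follows by the symmetric argument, swapping the roles of the two factors.

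The main obstacle I anticipate is the well-definedness and boundedness of the component multipliers, which underpins the density/extension step: for the weak identity on simple tensors to propagate to all of $\h$, the operators $\textbf{M}_{m_j,F_j,G_j}$ must be genuine bounded operators. Under a boundedness assumption on the symbols this is supplied by Lemma \ref{tar}; otherwise one reads it off a posteriori from the trace-class (hence bounded) property of each factor forced by the hypothesis on $\textbf{M}_{m,F,G}$. The interchange of summation and integration in the explicit evaluation of $c$ is justified by the integrability inherent in the trace-class assumption.
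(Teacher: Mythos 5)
Your proposal is correct and follows exactly the route the paper intends: the paper states this corollary without a separate proof, calling it a simple consequence of Definition \ref{definitioncontframemult} and Theorem \ref{th:partialtrace}, and your argument (factorize $\textbf{M}_{m,F,G}$ as $\textbf{M}_{m_1,F_1,G_1}\otimes\textbf{M}_{m_2,F_2,G_2}$ on simple tensors via Fubini, extend by density as in Theorem \ref{TF}, then apply \eqref{eq:parttraceformula}) is precisely that intended argument, with the well-definedness caveats sensibly handled. Your added observation that the partial trace equals $\textbf{M}_{c\,m_1,F_1,G_1}$ with $c=\text{Tr}_{\h_2}(\textbf{M}_{m_2,F_2,G_2})$ is a correct sharpening of the ``same form'' claim.
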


\section{Bilinear localization operators} \label{sec:STFTwavmult0}

In this section, we reveal bilinear localization operators as examples of tensor product continuous frame multipliers.
In the case of short-time Fourier transform multipliers (STFT multipliers),
the results from Section \ref{sec:multipliers} are in line with those of \cite{CKasso, Teof2018}, while
their interpretation in the case of wavelet multipliers (Calder\'on--Toeplitz operators)
and mixed STFT/wavelet multipliers seems to be new, although their "linear" counterparts are
well studied, see e.g. \cite[Section 3.4]{BBR} for a brief survey.
In addition, let us mention that the continuity properties of multipliers for the  ridgelet transform given in \cite{LiWong}
can be derived from the results of \cite{BBR}.

\par

STFT multipliers, also known as time-frequency localization operators,
are used in signal analysis as a mathematical tool to extract specific features
of a signal from  its  phase space representations, \cite{da88}. In other contexts, they have been used as a quantization procedure \cite{Berezin71}, or as an approximation of pseudodifferential operators, cf. \cite{CRodino} and the references given there.

\par

We first recall some necessary facts.

\par

Let $T_xf(\cdot):=f(\cdot -x)$, $M_\omega  f(\cdot ):=e^{2\pi i\omega \cdot }f(\cdot)$, and  $D_a f(\cdot) :=|a|^{-d/2} f(\frac{\cdot}{a})$, denote
translation, modulation, and  dilation operators,  respectively, $x,\omega \in \mathbb{R}^{d}$,
$a \in \mathbb{R}\setminus \{ 0\}. $ These operators are unitary on $ L^2 (\mathbb{R}^d)$, and we use the notation
\begin{align*}
\pi(x,\omega) & = M_\omega T_x, \qquad & \text{for} \qquad & (x,\omega)\in \mathbb{R}^{2d}, \\
 \paff (b,a) & = T_b D_a, \qquad  &\text{for} \qquad & (b,a) \in \mathbb{R}^{d} \times (\mathbb{R}\setminus \{ 0 \}).
\end{align*}

\par

Let $ \hat g$ denote the Fourier transform of $g\in  L^1 (\mathbb{R}^d)$ given by
${\hat   {g}}(\omega)$ $ = \int g(t)e^{-2\pi i t\omega}dt$. This definition extends to  $ L^2 (\mathbb{R}^d)$ by density arguments. We say that
$g \in L^{2}(\mathbb{R}^d)$ is an {\em admissible wavelet} if
\begin{equation} \label{eq:admissiblewavelet}
0 < C_{g}:=\int_{\mathbb{R}^{d}} \frac{|\hat{g}(\omega)|^{2}} {|\omega|}\,d\omega< +\infty.
\end{equation}

\begin{defn}\label{Def:transforms}
Let $ g\in L^{2}(\mathbb{R}^d)\setminus\{0\}$. The
short-time Fourier transform (STFT) of a function $f\in L^{2}(\mathbb{R}^d)$ with respect to the window function $g $ is given
by
$$
V_g f(x,\omega) :=    \int_{\mathbb{R}^d}  f(t)\, {\overline {g(t-x)}} \, e^{-2\pi i\omega t}\,dt =
\langle f,M_\omega T_x g \rangle = \langle f, \pi(x,\omega) g \rangle,
$$
$ (x,\omega)\in \mathbb{R}^{2d}$.
If, in addition, \eqref{eq:admissiblewavelet} holds, i.e. $g$ is an admissible wavelet,
then the (continuous) wavelet transform  of  $f\in L^{2}(\mathbb{R}^d)$ with respect  $g $ is given
by
$$
W_{g}(f)(b,a):= \int_{\mathbb{R}^{d}} f(t)\frac{1}{|a|^{\frac{d}{2}}}\overline{g(a^{-1} (t-b )}\,dt =
\langle f,  T_b D_a g \rangle = \langle f, \paff (b,a) g\rangle,
$$
$  b\in \mathbb{R}^{d}, a \in \mathbb{R}\setminus \{ 0 \}.$
\end{defn}

\par

Definition \ref{Def:transforms} can be extended to various spaces of (generalized) functions, but we focus our attention here to
$ L^{2}(\mathbb{R}^d) $ to make the exposition of our main ideas more transparent.

\par

By the orthogonality relation (see e.g. \cite[Theorem 3.2.1]{Grobook})
\begin{equation} \label{eq:STFTortrel}
 \langle V_{g_{1}} f_{1}, V_{g_{2}} f_{2} \rangle
=\langle f_{1}, f_{2}\rangle \overline{\langle g_{1},g_{2}\rangle}, \;\;
f_{1}, f_{2},\in L^{2}(\mathbb{R}^d), \;\;
 g_{1}, g_{2} \in L^{2}(\mathbb{R}^d) \setminus \{ 0 \},
\end{equation}
if $  g_{1} = g_{2} = g$ it follows that $ \pi(x,\omega) g $ is  a continuous tight frame for $L^{2}(\mathbb{R}^d)$
with respect to $ (\mathbb{R}^d \times \mathbb{R}^d, dx d\omega)$ and with bound $\|g\|^2$, for any $ g\in L^{2}(\mathbb{R}^d)\setminus\{0\}$.
If  $ \| g \| = 1$, then we have a continuous Parseval frame.

Likewise, for the wavelet transform the following  orthogonality relation holds:
\begin{equation} \label{eq:WTortrel}
\int_0 ^\infty \int_{\mathbb{R}^{d}} W_{g_1}(f_1)(b,a) \overline{ W_{g_2}(f_2)(b,a)} \frac{db da}{a^{d+1}} = C_{g_1,g_2} \langle f_{1},
f_{2}\rangle,
 \;\;
f_{1}, f_{2}\in L^{2}(\mathbb{R}^d),
\end{equation}
if  $g_{1}, g_{2}\in L^{2}(\mathbb{R}^d)$ are such that for almost all $\omega \in \mathbb{R}^d$  with $|\omega| = 1$,
$$
\int_0 ^\infty | \hat g_1 (s \omega) | | \hat g_2 (s \omega) | \frac{ds}{s} < \infty,
$$
and the constant $ C_{g_1,g_2} $ given by
$$
 C_{g_1,g_2} := \int_0 ^\infty \overline{ \hat g_1 (s \omega) } \hat g_2 (s \omega ) \frac{ds}{s}
$$
is finite, non-zero, and independent on $\omega$, cf. \cite[Theorem 10.2]{Grobook}.

If $g\in L^{2}(\mathbb{R}^d)$ is an admissible and rotation invariant function, then
the orthogonality relation holds for $g=g_1=g_2$, and
$ \paff(b,a) g $ is a continuous tight frame for $L^{2}(\mathbb{R}^d)$ with respect to
$  ( \mathbb{R}^{d} \times \mathbb{R}\setminus \{ 0 \}, \frac{db da}{a^{d+1}}) $.
The frame bound is $1/C_{g,g}$, and if $g$ is suitably normed so that $C_{g,g} = 1$, then we have a continuous Parseval frame.

\par

Related continuous frame multipliers, called STFT and Calderon-Toeplitz multipliers, were discussed in \cite{BBR}. Here
we consider the tensor product space $ \h = L^{2}(\mathbb{R}^d) \otimes L^{2}(\mathbb{R}^d)$ instead.

\par

If $ \vec f, \vec \varphi \in \h$, then
$$
 V_{\vec \varphi } \vec f(x,\omega)=  \langle \vec f, \pi(x,\omega)  \vec \varphi \rangle =
   \int_{\mathbb{R} ^{2d}}  \vec f(t)\, \overline{ \pi(x,\omega)  \vec \varphi (t)}\,dt, \qquad  x,\omega \in \mathbb{R}^{2d},
$$
and if $ \vec \varphi (t) = \varphi_1 \otimes \varphi_2 (t)  =  \varphi_1 (t_1) \varphi_2 (t_2), $ $ t = (t_1, t_2)  \in \mathbb{R}^{d} \times
\mathbb{R}^{d},$ then  $  V_{\vec \varphi } $ acts on a simple tensor $f_1 \otimes f_2 \in \h$ as
\begin{multline} \label{STFTtensor}
V_{\varphi _1 \otimes \varphi _2} (f_1 \otimes f_2) (x,\omega)
= \int_{\mathbb{R}^{2d}} (f_1 \otimes f_2) (t) \overline{  \pi(x,\omega) \varphi _1 \otimes \varphi _2 (t)} dt \\
= \iint_{\mathbb{R}^{d}} (f_1 \otimes f_2) (t) \overline{ \pi(x_1 ,\omega_1 ) \varphi _1 (t_1)  \pi(x_2 ,\omega_2 ) \varphi _2 (t_2)} dt_1 dt_2.
\end{multline}

\begin{lem} \label{lm:STFTtensprod}
Let $ \h = L^{2}(\mathbb{R}^d) \otimes L^{2}(\mathbb{R}^d)$, and
$ \vec \varphi  = \varphi_1 \otimes \varphi_2 \in  \h \setminus \{ 0 \}$. Then
$$
\pi(x,\omega)  \vec \varphi (t) = \pi(x_1 ,\omega_1 ) \varphi _1 (t_1)  \pi(x_2 ,\omega_2 ) \varphi _2 (t_2)
$$
is a continuous  tight frame for the tensor product space $\h$ with respect to
$ (\mathbb{R}^d \times \mathbb{R}^d, dx d\omega)$.
\end{lem}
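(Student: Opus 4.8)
The plan is to read off $F(x,\omega)=\pi(x,\omega)\vec\varphi$ as a simple tensor $F=F_1\otimes F_2$ of two time-frequency systems, to observe that each factor is already known to be a continuous tight frame by the STFT orthogonality relation, and then to invoke the consistency theorem (Theorem \ref{thm:main}) to transfer tightness to the tensor product. Thus essentially all of the work has already been done in the preceding sections, and the proof is an assembly of \eqref{eq:STFTortrel}, Theorem \ref{thm:main} and Lemma \ref{lm:tensorprodprop}.

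First I would establish the factorization displayed in the statement. Writing $x=(x_1,x_2)$ and $\omega=(\omega_1,\omega_2)$ under the identification $L^2(\mathbb{R}^{2d})\cong L^2(\mathbb{R}^d)\otimes L^2(\mathbb{R}^d)$ with $t=(t_1,t_2)$, the translation and modulation operators split along the two coordinate blocks, namely $T_x=T_{x_1}\otimes T_{x_2}$ and $M_\omega=M_{\omega_1}\otimes M_{\omega_2}$. Using the composition rule for tensor products of operators together with Lemma \ref{lm:tensorprodprop}(b) this gives
$$\pi(x,\omega)(\varphi_1\otimes\varphi_2)=\bigl(M_{\omega_1}T_{x_1}\otimes M_{\omega_2}T_{x_2}\bigr)(\varphi_1\otimes\varphi_2)=\bigl(\pi(x_1,\omega_1)\varphi_1\bigr)\otimes\bigl(\pi(x_2,\omega_2)\varphi_2\bigr),$$
which, evaluated at $t=(t_1,t_2)$, reproduces exactly the pointwise product asserted in the statement.

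Next I would set $F_j(x_j,\omega_j):=\pi(x_j,\omega_j)\varphi_j$ for $j=1,2$ and apply the orthogonality relation \eqref{eq:STFTortrel} with $g_1=g_2=\varphi_j$. As recorded in the discussion preceding this lemma, this shows that each $F_j$ is a continuous tight frame for $L^2(\mathbb{R}^d)$ with respect to $(\mathbb{R}^d\times\mathbb{R}^d,\,dx_j\,d\omega_j)$ with frame bound $\|\varphi_j\|^2$; in particular each $F_j$ is weakly measurable and satisfies the two-sided frame inequality with $A_j=B_j=\|\varphi_j\|^2$. Since by the first step $F=F_1\otimes F_2$, the converse direction of Theorem \ref{thm:main} then yields that $F$ is a continuous frame for $\h$ with bounds $A=A_1A_2$ and $B=B_1B_2$. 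Because each factor is tight with $A_j=B_j=\|\varphi_j\|^2$, these coincide, $A=B=\|\varphi_1\|^2\|\varphi_2\|^2$, so $F$ is tight; by Lemma \ref{lm:tensorprodprop}(a) the common bound equals $\|\varphi_1\|^2\|\varphi_2\|^2=\|\vec\varphi\|^2$.

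I do not anticipate a genuine obstacle here, as the statement is a direct specialization of earlier results. The only point demanding care is the coordinate-block factorization of the time-frequency shift in the first step; once the identification of $L^2(\mathbb{R}^{2d})$ with $L^2(\mathbb{R}^d)\otimes L^2(\mathbb{R}^d)$ is made explicit and the elementary multiplicativity $(A\otimes B)(C\otimes D)=AC\otimes BD$ of operator tensor products is used, everything reduces to the already-established tightness of the scalar STFT frames and to the consistency theorem.
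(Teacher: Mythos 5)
Your proof is correct, but it takes a somewhat different route than the paper's. The paper does not factor the system as $F_1\otimes F_2$ and invoke Theorem \ref{thm:main}; instead it combines the scalar orthogonality relation \eqref{eq:STFTortrel} with the factorization \eqref{STFTtensor} of the STFT of simple tensors to get the orthogonality relation for simple tensors in $\h$, and then extends that relation to all of $\h$ by density, citing \cite[Proposition 2.5]{RaNaDe}. You instead treat the two scalar systems $F_j(x_j,\omega_j)=\pi(x_j,\omega_j)\varphi_j$ as continuous tight frames (same ingredient, \eqref{eq:STFTortrel}) and let the converse direction of Theorem \ref{thm:main} do the tensoring, noting that $A=A_1A_2=B_1B_2=B$, so the resulting frame is tight with bound $\|\varphi_1\|^2\|\varphi_2\|^2=\|\vec\varphi\|^2$. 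The two routes are mathematically equivalent: the converse half of Theorem \ref{thm:main} is itself proved by checking the inequalities on simple tensors and applying the same density proposition of Rahimi--Najati--Dehghan, so your version is essentially a modular repackaging. What your approach buys is cleanliness — the general consistency theorem is used as a black box, and the explicit operator factorization $\pi(x,\omega)=\pi(x_1,\omega_1)\otimes\pi(x_2,\omega_2)$ via Lemma \ref{lm:tensorprodprop}(b) is spelled out, which the paper leaves implicit. What the paper's route buys is that the full orthogonality (inner-product) relation on $\h$ is obtained directly, which is the form used later when identifying multipliers with bilinear localization operators. One small point worth making explicit in your write-up: $\varphi_1\otimes\varphi_2\neq 0$ forces $\varphi_j\neq 0$ for $j=1,2$ (since $\|\varphi_1\otimes\varphi_2\|=\|\varphi_1\|\,\|\varphi_2\|$), which is needed before \eqref{eq:STFTortrel} can be applied to each factor with $g_1=g_2=\varphi_j$.
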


\begin{proof}
From \eqref{eq:STFTortrel} and \eqref{STFTtensor} it follows that  the orthogonality relation holds for simple tensors.
Now by \cite[Proposition 2.5]{RaNaDe}  the orthogonality relation can be extended to  $\h$, and we conclude that
$$
\pi(x,\omega)  \vec \varphi (t) =  \pi(x_1 ,\omega_1 ) \varphi _1 (t_1)  \pi(x_2 ,\omega_2 ) \varphi _2 (t_2), \quad
x, \omega \in  \mathbb{R}^{2d},
$$
with $t = (t_1, t_2)  \in \mathbb{R}^{d} \times \mathbb{R}^{d},$ is a continuous  tight frame for
$\h$, i.e.
$$
\langle \vec f, \pi(x,\omega) \vec \varphi \rangle = \|\vec f \| \| \vec \varphi \|.
$$
If, in addition, $\vec \varphi \in \h$ is chosen so that $ \| \vec \varphi \| = 1$, then $\pi(x,\omega)  \vec \varphi$ is a Parseval frame.
\end{proof}


\par

Let $\vec \varphi =\varphi_1 \otimes \varphi_2, \vec \phi = \phi_1 \otimes \phi_2 \in \h,$
$ \| \vec \varphi \| =  \| \vec \phi \| = 1$, and let $m : \mathbb{R}^{4d} \mapsto
\mathbb{C}$ be a measurable function.
Then the tensor product continuous frame multipliers of the form
$
M_{m, \pi(x,\omega)  \vec \varphi, \pi(x,\omega)  \vec \phi}
$
can be identified with  bilinear localization operators considered in \cite{CorGro2003, Teof2018} (see Remark 1.2 in \cite{Teof2018}), i.e.
\begin{equation} \label{eq:Mblop}
\langle M_{m, \pi(x,\omega)  \vec \varphi, \pi(x,\omega)  \vec \phi} \vec{f}, \vec{g} \rangle  =
\langle  m V_{\varphi _1 \otimes \varphi _2} (f_1 \otimes f_2) ,
V_{\phi_1 \otimes \phi_2} (g_1 \otimes g_2)   \rangle,
\end{equation}
$ f_1,f_2,g_1,g_2 \in   L^{2}(\mathbb{R}^d) $.
The function $m$ is commonly called {\em the symbol} of the operator
$M_{m, \pi(x,\omega)  \vec \varphi, \pi(x,\omega)  \vec \phi}$.

Certain Schatten class properties of bilinear localization operators given by
\eqref{eq:Mblop} can be deduced from their linear counterparts given in e.g.
\cite{CorGro2003,CPRT2}. In these investigations, localization operators are interpreted as Weyl pseudodifferential operators.
We note that these results extend results from Section \ref{sec:multipliers} in the considered special case.
However, we present here a simple alternative proof of related particular result for the linear case given in \cite{CorGro2003}.

\begin{prop} \label{prop:STFTSp}
Let $ \h = L^{2}(\mathbb{R}^d) \otimes L^{2}(\mathbb{R}^d)$, and let
$ \varphi_1, \varphi_2, \phi_1, \phi_2  \in  L^{2}(\mathbb{R}^d) \setminus \{ 0 \}$.
If $ m  \in L^p (\mathbb{R}^{2d}),$ $ 1\leq p<\infty,$ then
$M_{m, \pi(x,\omega)  \vec \varphi, \pi(x,\omega)  \vec \phi} $ given by \eqref{eq:Mblop} belongs to
Schatten class $ \mathcal{S}_p(\h)$.
\end{prop}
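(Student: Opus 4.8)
The plan is to recognize the operator $M_{m, \pi(x,\omega)\vec\varphi, \pi(x,\omega)\vec\phi}$ as a tensor product continuous frame multiplier in the sense of Definition \ref{definitioncontframemult} and then to invoke the Schatten class estimates of Theorem \ref{sec:schatten1}. Comparing the weak formulation \eqref{de:operatorM} with \eqref{eq:Mblop}, I identify the analysis window with the frame $F(x,\omega) = \pi(x,\omega)\vec\varphi$, the synthesis window with $G(x,\omega) = \pi(x,\omega)\vec\phi$, and the symbol with $m$, over the phase space $X = \mathbb{R}^{4d}$ (the combined variable $(x,\omega)$, with $x=(x_1,x_2)$, $\omega=(\omega_1,\omega_2)$) equipped with Lebesgue measure.

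First I would verify the hypotheses of Theorem \ref{sec:schatten1}. By Lemma \ref{lm:STFTtensprod} both $F$ and $G$ are continuous tight frames for $\h$ (with tight bounds $\|\vec\varphi\|^2$ and $\|\vec\phi\|^2$), hence in particular Bessel mappings, so they qualify as in Definition \ref{definitioncontframemult}. The decisive point is norm boundedness. Since $\pi(x,\omega) = \pi(x_1,\omega_1)\otimes\pi(x_2,\omega_2)$ is a tensor product of operators that are unitary on $L^2(\mathbb{R}^d)$, it is unitary on $\h$ and therefore norm preserving. Consequently, using Lemma \ref{lm:tensorprodprop} a),
$$\|F(x,\omega)\| = \|\pi(x,\omega)\vec\varphi\| = \|\vec\varphi\| = \|\varphi_1\|\,\|\varphi_2\|$$
for every $(x,\omega)$, so $F$ is norm bounded with bound $L_F = \|\varphi_1\|\,\|\varphi_2\|$; symmetrically $G$ is norm bounded with $L_G = \|\phi_1\|\,\|\phi_2\|$. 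Note that no normalization of the windows is needed: the norm boundedness and the Bessel property hold for arbitrary nonzero $\varphi_j,\phi_j\in L^2(\mathbb{R}^d)$.

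With both frames norm bounded, I would apply Theorem \ref{sec:schatten1} directly. If $m \in L^1(X)$, then part (1) yields $M_{m, \pi(x,\omega)\vec\varphi, \pi(x,\omega)\vec\phi} \in \mathcal{S}_1(\h)$, while if $m \in L^p(X)$ with $1 < p < \infty$, then part (2) yields membership in $\mathcal{S}_p(\h)$, together with the quantitative bounds recorded there (in which the Bessel bounds appearing in part (2) are exactly the tight bounds $\|\vec\varphi\|^2,\|\vec\phi\|^2$). This covers the whole range $1 \le p < \infty$ and completes the argument.

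I do not expect a genuine obstacle. The entire content is the observation that a frame built from a fixed window by a unitary group action has constant fiber norm, which is precisely the norm boundedness hypothesis that Theorem \ref{sec:schatten1} requires; everything else is a matter of matching definitions. The only mild points to keep straight are the bookkeeping of the phase space dimension (the symbol lives on $\mathbb{R}^{4d}$, not $\mathbb{R}^{2d}$) and the separation of the endpoint $p=1$ from the range $1<p<\infty$ when citing the two parts of the theorem.
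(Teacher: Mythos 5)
Your proposal is correct and follows essentially the same route as the paper's own proof: invoke Lemma \ref{lm:STFTtensprod} to see that $\pi(x,\omega)\vec\varphi$ and $\pi(x,\omega)\vec\phi$ are continuous tight frames, recognize $M_{m,\pi(x,\omega)\vec\varphi,\pi(x,\omega)\vec\phi}$ as a tensor product continuous frame multiplier, and apply Theorem \ref{sec:schatten1}. In fact you are slightly more careful than the paper, which never explicitly verifies the norm-boundedness hypothesis $\|F(x,\omega)\|\equiv\|\vec\varphi\|$, $\|G(x,\omega)\|\equiv\|\vec\phi\|$ required by Theorem \ref{sec:schatten1} (it follows, as you note, from unitarity of $\pi(x,\omega)$ on $\h$), and you correctly flag that the symbol's domain should read $\mathbb{R}^{4d}$.
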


\begin{proof}
By Lemma \ref{lm:STFTtensprod} it follows that $F = \pi(x,\omega)  \vec \varphi$ and $G = \pi(x,\omega)  \vec \phi $ are
continuous  tight frames for $\h.$ Thus $M_{m, F, G} $ is a tensor product continuous frame multiplier, and by Theorem \ref{sec:schatten1}
it follows that $M_{m, F, G} \in \mathcal{S}_p(\h)$.
\end{proof}

Next we discuss bilinear Calder\'on-Toeplitz operators. To that end we consider time-scale shifts, and the left Haar measure $\mu = dbda/a^{d+1}$.

Let  $ \varphi_1,  \varphi_2 $ be admissible rotation invariant  wavelets, $\vec\varphi =  \varphi_1 \otimes \varphi_2 \in \h,$ and let $ \vec f \in \h$. Then the tensor product continuous wavelet transform is given by
$$
W_{\vec\varphi}( \vec f)(b,a)  = \langle \vec f, \paff (b,a) \vec\varphi \rangle, \qquad
  b\in \mathbb{R}^{2d}, a \in \mathbb{R}^2 \setminus \{ 0 \}.
$$
It acts on a simple tensor $f_1 \otimes f_2 \in \h$ as
\begin{multline} \label{WTtensor}
W_{\vec\varphi} (f_1 \otimes f_2) (b,a)  =
W_{\varphi_1} (f_1 ) \otimes W_{\varphi_2} (f_2) (b_1, b_2, a_1, a_2) \\
= \langle f_1, \paff (b_1,a_1) \varphi_1 \rangle   \langle f_2, \paff (b_2,a_2) \varphi_2 \rangle
\end{multline}
where $b = (b_1, b_2) \in \mathbb{R}^{2d} $,  $ a= (a_1, a_2 ) \in  \mathbb{R}^2 \setminus \{ 0 \}$,
and
$$
\paff (b,a) \vec\varphi = \paff (b_1,a_1) \varphi_1 \otimes \paff (b_2,a_2) \varphi_2.
$$

\begin{lem} \label{lm:WTtensprod}
Let $ \h = L^{2}(\mathbb{R}^d) \otimes L^{2}(\mathbb{R}^d)$, and
$\vec\varphi =  \varphi_1 \otimes \varphi_2 \in \h,$ where $ \varphi_1$ and $\varphi_2 $ are admissible rotation invariant  wavelets. Then
$$
\paff (b,a)  \vec \varphi (t), \quad
b\in \mathbb{R}^{2d}, a \in \mathbb{R}^2 \setminus \{ 0 \},
$$
is a continuous  tight frame for the tensor product space $\h$ with respect to
$ (\mathbb{R}^{2d} \times\mathbb{R}^2 \setminus \{ 0 \}, \frac{db da}{a^{d+1}})$.
\end{lem}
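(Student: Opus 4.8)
The plan is to reduce this to the previously established tensor-product frame theorem, exactly as was done for the STFT case in Lemma \ref{lm:STFTtensprod}. The key observation is that the admissibility and rotation-invariance hypotheses on $\varphi_1$ and $\varphi_2$ are precisely what guarantees, by the orthogonality relation \eqref{eq:WTortrel}, that each $\paff(b_j,a_j)\varphi_j$ is a continuous tight frame for $L^2(\mathbb{R}^d)$ with respect to $(\mathbb{R}^d \times \mathbb{R}\setminus\{0\}, \frac{db_j\,da_j}{a_j^{d+1}})$ and with frame bound $1/C_{\varphi_j,\varphi_j}$, as recalled in the discussion following \eqref{eq:WTortrel}. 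So the component systems $F_1 = \paff(\cdot)\varphi_1$ and $F_2 = \paff(\cdot)\varphi_2$ are already known to be (tight) continuous frames.

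First I would invoke the factorization \eqref{WTtensor}, which shows that on simple tensors the tensor wavelet transform factors as a product $\langle f_1, \paff(b_1,a_1)\varphi_1\rangle\,\langle f_2, \paff(b_2,a_2)\varphi_2\rangle$; this is exactly the statement that $\paff(b,a)\vec\varphi = F_1 \otimes F_2$ as a tensor-product mapping on the product measure space. Then I would apply Theorem \ref{thm:main}: since $F_1$ and $F_2$ are continuous frames for $\h_1$ and $\h_2$ respectively, their tensor product $F_1 \otimes F_2$ is a continuous frame for $\h = \h_1 \otimes \h_2$ with frame bounds $A_1 A_2$ and $B_1 B_2$. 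Because each $F_j$ is tight with bound $1/C_{\varphi_j,\varphi_j}$, the product frame is tight with bound $\frac{1}{C_{\varphi_1,\varphi_1} C_{\varphi_2,\varphi_2}}$, yielding tightness directly.

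The main subtlety — the analogue of the ``extend from simple tensors'' step in Lemma \ref{lm:STFTtensprod} — is verifying that the frame inequality, which we establish on the dense span of simple tensors, extends to all of $\h$. This is handled automatically inside Theorem \ref{thm:main} via \cite[Proposition 2.5]{RaNaDe}, so once the hypotheses of that theorem are checked there is nothing further to do. I expect the only real obstacle to be bookkeeping: confirming that the underlying measure space genuinely factors as $(\mathbb{R}^{2d} \times \mathbb{R}^2\setminus\{0\}, \frac{db\,da}{a^{d+1}}) = (\mathbb{R}^d \times \mathbb{R}\setminus\{0\}, \frac{db_1\,da_1}{a_1^{d+1}}) \times (\mathbb{R}^d \times \mathbb{R}\setminus\{0\}, \frac{db_2\,da_2}{a_2^{d+1}})$ so that Fubini applies and $a^{d+1}$ is correctly interpreted as $a_1^{d+1} a_2^{d+1}$ in the product, matching the normalization under which each component is tight.

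Concretely, the proof would read: By the discussion after \eqref{eq:WTortrel}, since $\varphi_1$ and $\varphi_2$ are admissible rotation invariant wavelets, each $F_j = \paff(\cdot)\varphi_j$ is a continuous tight frame for $L^2(\mathbb{R}^d)$ with bound $1/C_{\varphi_j,\varphi_j}$. By \eqref{WTtensor} the mapping $\paff(b,a)\vec\varphi$ is the tensor product $F_1 \otimes F_2$ over the product measure space. Theorem \ref{thm:main} then gives that $F_1 \otimes F_2$ is a continuous frame for $\h$ with bounds $A = A_1 A_2$ and $B = B_1 B_2$; as $A_j = B_j = 1/C_{\varphi_j,\varphi_j}$, we obtain a tight frame with bound $1/(C_{\varphi_1,\varphi_1} C_{\varphi_2,\varphi_2})$, which completes the proof.
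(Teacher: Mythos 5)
Your proof is correct, but it is packaged differently from the paper's. The paper disposes of this lemma by analogy with Lemma \ref{lm:STFTtensprod}: there, the orthogonality relation is verified on simple tensors directly (combining \eqref{eq:WTortrel} with the factorization \eqref{WTtensor}) and then extended to all of $\h$ via \cite[Proposition 2.5]{RaNaDe}, giving the exact Parseval-type identity and hence tightness. You instead take the component systems $F_j = \paff(\cdot)\varphi_j$, which the discussion after \eqref{eq:WTortrel} already certifies as tight continuous frames with bound $1/C_{\varphi_j,\varphi_j}$, and feed them into Theorem \ref{thm:main}, reading off tightness from the bound formulas $A = A_1A_2$, $B = B_1B_2$ since $A_j = B_j$. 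The two routes rest on the same two ingredients, the factorization on simple tensors and the density-extension step (which sits inside the proof of Theorem \ref{thm:main} anyway), so neither is deeper than the other; what yours buys is modularity, since the lemma becomes a one-line corollary of the general consistency theorem with an explicit tight bound $1/\left(C_{\varphi_1,\varphi_1} C_{\varphi_2,\varphi_2}\right)$, whereas the paper's direct extension yields the orthogonality identity on all of $\h$ (equivalent to tightness by polarization) without detouring through frame inequalities. Your flagging of the measure bookkeeping is also apt: the notation $\frac{db\,da}{a^{d+1}}$ with $a \in \mathbb{R}^2\setminus\{0\}$ in the statement only makes sense when read as the product of the two one-parameter Haar measures, i.e.\ with $a^{d+1}$ interpreted as $|a_1|^{d+1}|a_2|^{d+1}$, which is precisely what both arguments require for Fubini and for Theorem \ref{thm:main} to apply.
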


The proof is similar to the proof of Lemma \ref{lm:STFTtensprod}, and therefore omitted.

If $m : \mathbb{R}^{2d} \times  \mathbb{R}^2 \setminus \{ 0 \} \mapsto \mathbb{C}$ is a measurable function, then the tensor product continuous frame multipliers of the form
\begin{equation} \label{eq:Mblopaffine}
M_{m,  \paff (b,a) \vec \varphi, \paff (b,a)  \vec \phi} \vec{f}, \vec{g} \rangle  =
\langle  m W_{\varphi _1 \otimes \varphi _2} (f_1 \otimes f_2) ,
W_{\phi_1 \otimes \phi_2} (g_1 \otimes g_2)   \rangle,
\end{equation}
$ f_1,f_2,g_1,g_2 \in   L^{2}(\mathbb{R}^d) $,
can be interpreted as a bilinear extension of (two)\-wavelet localization operators considered in
\cite{Wong2002}.
More precisely, we have the following result, which seems to be  new (see also \cite[Theorem 19.11]{Wong2002}).

\begin{prop} \label{prop:waveletSp}
Let $ \h = L^{2}(\mathbb{R}^d) \otimes L^{2}(\mathbb{R}^d)$,
and let $ \varphi_1,$
$\varphi_2 $, $ \phi_1,$ and $ \phi_2 $ be admissible rotation invariant wavelets such that
$ \| \varphi_j \| = \| \phi_j \| = 1$, $ j =1,2$.
If $ m \in L^p (\mathbb{R}^{2d} \times  \mathbb{R}^2 \setminus \{ 0 \}, \frac{db da}{|a|^{d+1}}) $, $ 1\leq p<\infty,$ then
$ M_{m,  \paff (b,a) \vec \varphi, \paff (b,a)  \vec \phi} $ given by \eqref{eq:Mblopaffine}
belongs to Schatten class $ \mathcal{S}_p(\h).$
\end{prop}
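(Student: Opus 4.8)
The plan is to mirror the argument used for the STFT case in Proposition \ref{prop:STFTSp}, since the two statements are structurally identical once the wavelet frame replaces the STFT frame. First I would invoke Lemma \ref{lm:WTtensprod}: because $\varphi_1,\varphi_2$ (respectively $\phi_1,\phi_2$) are admissible rotation invariant wavelets with unit norm, the families $F(b,a) = \paff(b,a)\vec\varphi$ and $G(b,a) = \paff(b,a)\vec\phi$ are continuous tight frames for $\h = L^2(\mathbb{R}^d)\otimes L^2(\mathbb{R}^d)$ with respect to the measure space $(\mathbb{R}^{2d}\times(\mathbb{R}^2\setminus\{0\}), \frac{db\,da}{a^{d+1}})$. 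In particular they are Bessel mappings, so the object $M_{m, \paff(b,a)\vec\varphi, \paff(b,a)\vec\phi}$ defined by \eqref{eq:Mblopaffine} genuinely falls under Definition \ref{definitioncontframemult} as a tensor product continuous frame multiplier.

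The crux is then to verify the hypotheses of the Schatten-class Theorem \ref{sec:schatten1}, whose part (2) gives exactly the desired conclusion for $1 < p < \infty$, while part (1) handles $p=1$. That theorem requires both $F$ and $G$ to be \emph{norm bounded} on the underlying measure space. This is where I would spend the real effort. Here norm boundedness is immediate and is in fact the whole point: since $\paff(b,a) = T_b D_a$ is a composition of unitary operators on $L^2(\mathbb{R}^d)$, one has $\|\paff(b,a)\varphi_j\| = \|\varphi_j\| = 1$ for every $(b,a)$, and hence by Lemma \ref{lm:tensorprodprop}(a),
\[
\|\paff(b,a)\vec\varphi\| = \|\paff(b_1,a_1)\varphi_1\|\,\|\paff(b_2,a_2)\varphi_2\| = \|\varphi_1\|\,\|\varphi_2\| = 1
\]
for all $(b,a)$, with the analogous identity for $G$. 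Thus $F$ and $G$ are norm bounded with norm bounds $L_F = L_G = 1$.

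With norm boundedness secured, the proof is essentially an application: for $m \in L^p$ with $1 \le p < \infty$, Theorem \ref{sec:schatten1} yields directly that $M_{m, \paff(b,a)\vec\varphi, \paff(b,a)\vec\phi} \in \mathcal{S}_p(\h)$, and one may even record the norm estimate $\|M\|_{\mathcal{S}_p} \le \|m\|_p (L_F L_G)^{1/p}(B_F B_G)^{(p-1)/(2p)} = \|m\|_p (B_F B_G)^{(p-1)/(2p)}$, where $B_F, B_G$ are the tight frame bounds ($1/C_{\varphi_1,\varphi_1}\cdot 1/C_{\varphi_2,\varphi_2}$ and similarly for $\phi$). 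The only genuine subtlety to flag is purely notational: the measure in the hypothesis is written with $|a|^{d+1}$ whereas Lemma \ref{lm:WTtensprod} uses $a^{d+1}$, and one should confirm these agree on the relevant domain, but this is a routine bookkeeping matter and does not affect the argument. I therefore expect no serious obstacle — the content is entirely carried by Lemma \ref{lm:WTtensprod} (frame property) and Theorem \ref{sec:schatten1} (Schatten estimate), exactly paralleling Proposition \ref{prop:STFTSp}, and the proof can be stated in a few lines.
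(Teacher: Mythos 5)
Your proof is correct and takes essentially the same route as the paper's own proof: invoke Lemma \ref{lm:WTtensprod} for the continuous tight frame property of $F = \paff(b,a)\vec\varphi$ and $G = \paff(b,a)\vec\phi$, then conclude via Theorem \ref{sec:schatten1}. You are in fact slightly more thorough than the paper, which leaves the norm-boundedness hypothesis of Theorem \ref{sec:schatten1} implicit, whereas you verify it explicitly from the unitarity of $\paff(b,a)$ and Lemma \ref{lm:tensorprodprop}(a).
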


\begin{proof}
By Lemma \ref{lm:WTtensprod} it follows that $F = \paff (b,a)   \vec \varphi$ and $G = \paff (b,a)   \vec \phi $ are
continuous  tight frames for $\h.$ Thus $M_{m, F, G} $ is a tensor product continuous frame multiplier, and by Theorem \ref{sec:schatten1}
it follows that $M_{m, F, G} \in \mathcal{S}_p(\h)$.
\end{proof}

\par

Finally, we combine STFT and wavelet continuous tight frames and consider bilinear localization operators of the ``mixed--form``.

\par

Consider the measurable space $(X,\mu) = (\mathbb{R}^{2d} \times (\mathbb{R}^{d} \times \mathbb{R}\setminus \{0\}), \mu )$
where $\mu$ is the product of $2d-$dimensional Lebesgue measure and the left Haar measure $ \frac{db da}{|a|^{d+1}}$.
If $\varphi \in L^2 (\mathbb{R}^{d})\setminus \{0\}$ and if $ \phi \in  L^2 (\mathbb{R}^{d}) $ is an admissible and rotation invariant wavelet, then
we define the STFT-Wavelet transform on $ \h = L^2 (\mathbb{R}^{d})\otimes  L^2 (\mathbb{R}^{d})$ as follows
\begin{equation} \label{eq:mixedtransform}
(V_\varphi \otimes W_\phi)(f_1 \otimes f_2)  =
\langle f_1, \pi(x,\omega) \varphi \rangle  \otimes \langle f_2, \paff (b,a)   \phi \rangle
\end{equation}

By orthogonality relations \eqref{eq:STFTortrel} and  \eqref{eq:WTortrel}, it follows that
$$
\langle (V_\varphi \otimes W_\phi)(f_1 \otimes f_2), (V_\varphi \otimes W_\phi)(g_1 \otimes g_2)\rangle_{L^2 (X)}
= \langle f_1 \otimes f_2,  g_1 \otimes g_2\rangle_{\h}  \|\varphi \|^2 \| \phi\|^2.
$$
Thus we conclude that
$ \pi(x,\omega) \varphi \paff (b,a)   \phi $
is continuous  tight frame for the tensor product space $\h$ (cf. Lemmas \ref{lm:STFTtensprod} and  \ref{lm:WTtensprod}).

If $m : X \rightarrow \mathbb{C} $ is a measurable function, then the
related tensor product continuous frame multiplier is given by
\begin{multline} \label{eq:Mblopmixed}
M_{m,   \pi(x,\omega) \varphi \paff (b,a) \phi, \pi(x,\omega) \varphi \paff (b,a) \phi} \vec{f}, \vec{g} \rangle \\
 =
\int_X  m (x)  (V_\varphi \otimes W_\phi)(f_1 \otimes f_2) (x),
(V_\varphi \otimes W_\phi) (g_1 \otimes g_2) (x) d\mu (x),
\end{multline}
for $ f_1,f_2,g_1,g_2 \in   L^{2}(\mathbb{R}^d) $.

In the same way as Propositions \ref{prop:waveletSp} and \ref{prop:STFTSp} we obtain the following.

\begin{prop} \label{prop:waveletSp2}
Let $ \h = L^{2}(\mathbb{R}^d) \otimes L^{2}(\mathbb{R}^d)$, and
$(X,\mu) = (\mathbb{R}^{2d} \times (\mathbb{R}^{d} \times \mathbb{R}\setminus \{0\}), dx d\omega \frac{db da}{|a|^{d+1}} )$.
Moreover, let
$\varphi \in L^2 (\mathbb{R}^{d})\setminus \{0\}$ and let  $ \phi  \in L^2 (\mathbb{R}^{d}) $ be an admissible and rotation invariant wavelet, such that $ \| \varphi \| = \|  \phi \| = 1$.
If $ m \in L^p (X) $, and $F= \pi(x,\omega) \varphi \paff (b,a) \phi $, $ 1\leq p<\infty,$  then
$ M_{m,  F, F} $ given by \eqref{eq:Mblopmixed}
belongs to Schatten class $ \mathcal{S}_p(\h).$
\end{prop}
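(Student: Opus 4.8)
The plan is to follow verbatim the two-line pattern used for Propositions \ref{prop:STFTSp} and \ref{prop:waveletSp}: once the mixed analyzing family $F = \pi(x,\omega)\varphi\,\paff(b,a)\phi$ (understood as the simple tensor $\pi(x,\omega)\varphi \otimes \paff(b,a)\phi$) is recognized as a \emph{norm-bounded continuous tight frame} for $\h$, the conclusion drops out of Theorem \ref{sec:schatten1}. Since the proposition allows the full range $1 \le p < \infty$, I would split into the case $p=1$ (invoking part (1) of Theorem \ref{sec:schatten1}) and the case $1 < p < \infty$ (invoking part (2)); both cases require exactly the same two structural facts about $F$, so the split is purely bookkeeping.

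First I would record the frame property, which is in fact already established in the paragraph preceding the statement: the orthogonality relations \eqref{eq:STFTortrel} and \eqref{eq:WTortrel} combine to give, on simple tensors, $\langle (V_\varphi\otimes W_\phi)(f_1\otimes f_2),(V_\varphi\otimes W_\phi)(g_1\otimes g_2)\rangle = \langle f_1\otimes f_2, g_1\otimes g_2\rangle\,\|\varphi\|^2\|\phi\|^2$, and extension from simple tensors to all of $\h$ (as in Lemmas \ref{lm:STFTtensprod} and \ref{lm:WTtensprod}, via \cite[Proposition 2.5]{RaNaDe}) shows that $F$ is a continuous tight frame on $(X,\mu)$. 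Because $\|\varphi\| = \|\phi\| = 1$, the tight bound equals $1$, so $F$ is in fact a Parseval frame and in particular $B_F = 1$.

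Second I would verify norm boundedness, which is the only genuinely new input relative to the two earlier propositions and the one place where the ``mixed'' character of $F$ must be checked. The key point is that each tensor factor is produced by a \emph{unitary} group action: as noted in this section, $\pi(x,\omega) = M_\omega T_x$ and $\paff(b,a) = T_b D_a$ are unitary on $L^2(\mathbb{R}^d)$, so $\|\pi(x,\omega)\varphi\| = \|\varphi\|$ and $\|\paff(b,a)\phi\| = \|\phi\|$ for every parameter value. By Lemma \ref{lm:tensorprodprop}(a) the tensor norm factorizes, giving $\|F(x,\omega,b,a)\| = \|\varphi\|\,\|\phi\| = 1$ uniformly, so $F$ is norm bounded with norm bound $L_F = 1$.

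With $F = G$ a norm-bounded continuous tight frame ($L_F = L_G = 1$, $B_F = B_G = 1$) and $m \in L^p(X)$, Theorem \ref{sec:schatten1} applies directly: for $p = 1$ part (1) gives $M_{m,F,F} \in \mathcal{S}_1(\h)$ with $\|M_{m,F,F}\|_{\mathcal{S}_1} \le \|m\|_1$, and for $1 < p < \infty$ part (2) gives $M_{m,F,F} \in \mathcal{S}_p(\h)$ with $\|M_{m,F,F}\|_{\mathcal{S}_p} \le \|m\|_p$ (the factors $(L_FL_G)^{1/p}$ and $(B_FB_G)^{(p-1)/(2p)}$ collapsing to $1$). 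I expect no real obstacle: all analytic content is already packaged in Theorem \ref{sec:schatten1}, and the sole thing to confirm is that combining an STFT factor with a wavelet factor does not spoil the two hypotheses of tightness and norm boundedness, both of which survive precisely because the underlying operators $\pi$ and $\paff$ are unitary.
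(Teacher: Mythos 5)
Your proof is correct and takes essentially the same approach as the paper, whose entire argument is that $F=\pi(x,\omega)\varphi\,\paff(b,a)\phi$ is a continuous tight frame for $\h$ (established via the orthogonality relations in the preceding paragraph) combined with Theorem \ref{sec:schatten1}. Your explicit check of norm boundedness ($\|F\|=\|\varphi\|\,\|\phi\|=1$ by unitarity of $\pi$ and $\paff$) is a hypothesis of Theorem \ref{sec:schatten1} that the paper leaves implicit, so it is a useful added detail rather than a different route.
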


\begin{proof} The result is a consequence of Theorem \ref{sec:schatten1} and the fact that  $ F= \pi(x,\omega) \varphi \paff (b,a)   \phi $ is a continuous  tight frame for  $\h$.
\end{proof}

\par

We refer to \cite{xxlgrospeck19} where a general approach based on the coorbit space theory is used to obtain
deep continuity results for related kernel type operators.

\section{Localization operators as density operators of  quantum systems} \label{sec:quantum}

In this section we first briefly recall the notion of a density operator or a density matrix
(as presented in e.g. \cite[Section 19]{Hall}), and then identify specific tensor product  continuous
frame  multipliers as density operators. This opens the possibility to use more general results from
Sections  \ref{sec:contframes} and
\ref{sec:multipliers} in the study of quantum systems.

\par

If $ \psi $ represents the wave function which describes the quantum system of
e.g. two spinless "distinguishable" particles moving in $\mathbb{R}^3 $, then
typically $ \psi = \psi (x,y) \in L^2 (\mathbb{R}^6)$, where $x$ is the position of the first particle,
and $y$ is the position of the second particle.
In general, there does not seem to be a way to associate a vector
$ \tilde \psi \in  L^2 (\mathbb{R}^3) $ which could sensibly describe the state of the first (or second) particle, see \cite{BB, Hall}. To overcome
this obstacle a more general notion of the "state" of a quantum system is introduced by associating
expectation value of an observable on  $ L^2 (\mathbb{R}^3) $ with respect to the wave function $\psi $.
This turned out to be the notion of {\em density operator} or {\em density matrix},
 which is uniquely determined by a given family of expectation values.
A density operator on the Hilbert space $\h$ is simply a non-negative, self-adjoint operator
$ \rho \in  \mathcal{S}_1(\h)$ such that
$ \text{Tr}_{\h} (\rho) = 1$.

A class of density operators, called Toeplitz operators is recently studied in \cite{deGosson2020, deGosson2021}. They correspond to quantum states obtained from a fixed function by position--momentum translations. This approach is closely related to the STFT multipliers, and we complement the investigations from  \cite{deGosson2020} by considering the corresponding partial traces (reduced density operators).

By partial trace theorem (Theorem \ref{th:partialtrace}), a density operator of a subsystem can be related to partial trace of the density operator for the whole system.
This procedure may give a reasonable description of a subsystem of   a bipartite system given by the tensor product Hilbert space
$\h = \h_1 \otimes \h_2$.
In particular, if $\rho  \in  \mathcal{S}_1(\h_1 \otimes \h_2)$  is of the form $ \rho = \rho_1 \otimes \rho_2 $, then the corresponding density
operators for subsystems $\h_j$, $ j = 1,2,$ given by partial trace theorem
are exactly $\rho_j$, $ j = 1,2$, cf. \cite[Theorem 19.13]{Hall}. Then the state is said to be {\em a separable state}.
The opposite direction, i.e. the existence of a pure state $ \rho$
such that given  $\rho_j$, $ j = 1,2$, are its partial traces is considered in e.g. \cite{Kly}.
Recently, for given $\rho_j$, $ j = 1,2$,
necessary and sufficient conditions for the existence of $\rho$ with $ \text{supp} \rho \subset \mathcal{X} \subseteq \h $ such that
$\rho_j$, $ j = 1,2$, are its partial traces are given in \cite{FGZ}.
These investigations lead to interesting insights related to different types of operator convergence. For example, the weak convergence is not
preserved under the partial trace. We refer to \cite{FGZ} for details in that direction.

\par

It is known that characteristic functions of a certain region in phase space give rise to trace class localization operators and may serve to
extract
time-frequency features of a signal when restricted to that region, see \cite{da88}.
Thus, it seems plausible to identify convenient tensor product continuous frame multipliers as "localized versions" of density operators of
bipartite systems, and use their partial traces to study the features of a subsystem. Of course, to be appropriate candidate of a density operator,
a multipliers has to satisfy certain conditions.
For the convenience we call them {\em admissible} multipliers.

\par

\begin{defn}\label{defadmisscontframemult}
Let $\textbf{M}_{m,F,G}$ be a tensor product continuous Bessel (frame) multiplier of $F$
and $G$ with respect to the  symbol $m$. Then, $\textbf{M}_{m,F,G}$ is {\em admissible} if it is non-negative, self-adjoint trace class operator
such that
$$
\text{Tr}_{\h} (\textbf{M}_{m,F,G}) = 1.
$$
\end{defn}

\par

Therefore, any admissible tensor product continuous Bessel (frame) multiplier
is a density operator.

As noted in Section \ref{sec:multipliers}, if  $F$ is a continuous frame,
$m(x)\geq \delta > 0$ and $\|m\|_{\infty}<\infty$, then $\textbf{M}_{m,F,F}$  is positive, self-adjoint
and invertible. For a given $F$, the trace of $\textbf{M}_{m,F,F}$ depends on the symbol $m$, which can be designed in such a way to ensure that
$\textbf{M}_{m,F,F}$ is in fact an admissible multiplier.

To illustrate this idea we consider  particular case of STFT multipliers.

\begin{thm}\label{thm:example}
Let there be given $\varphi, \phi \in  L^2  (\mathbb{R}^{d}) $ such that $ \langle \varphi, \phi \rangle \neq 0$.
If  $m \in L^1 (\mathbb{R}^{2d}) \cap L^\infty  (\mathbb{R}^{2d}) $, then
$$
\text{{\em Tr}}_{\h} ( M_{m, \pi(x,\omega)   \varphi, \pi(x,\omega)  \phi}) = \langle \varphi, \phi \rangle \int_{\mathbb{R}^{2d}} m (x,\omega) dx d\omega,
\;\;\;
x,\omega \in \mathbb{R}^{d},
$$
where $ M_{m, \pi(x,\omega)   \varphi, \pi(x,\omega)  \phi} $ is weakly given by
$$
\langle M_{m, \pi(x,\omega)   \varphi, \pi(x,\omega)  \phi} f ,g \rangle
= \langle  m V_{\varphi} f, V_{\phi} g  \rangle, \;\;\; f,g \in L^2  (\mathbb{R}^{d}).
$$
\end{thm}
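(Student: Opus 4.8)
The plan is to verify that the operator is trace class, and then to compute its trace by expanding over an orthonormal basis and collapsing the resulting fiberwise sum with Parseval's identity.

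First I would record that $M:=M_{m,\pi(x,\omega)\varphi,\pi(x,\omega)\phi}$ is a well-defined trace class operator, so that its trace is meaningful and basis-independent. Since $\pi(x,\omega)$ is unitary, the windows $F=\pi(x,\omega)\varphi$ and $G=\pi(x,\omega)\phi$ satisfy $\|F(x,\omega)\|=\|\varphi\|$ and $\|G(x,\omega)\|=\|\phi\|$ for all $(x,\omega)$, hence both are norm bounded, and by the orthogonality relation \eqref{eq:STFTortrel} they are (tight) continuous frames. As $m\in L^1(\mathbb{R}^{2d})$, Theorem \ref{sec:schatten1}(1) yields $M\in\mathcal{S}_1(L^2(\mathbb{R}^d))$ with $\|M\|_{\mathcal{S}_1}\le\|m\|_1\|\varphi\|\|\phi\|$; consequently $\text{Tr}_\h(M)=\sum_n\langle Me_n,e_n\rangle$ converges for any ONB $(e_n)$ of $L^2(\mathbb{R}^d)$.

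Next, using the weak definition of the multiplier I would expand each diagonal term as $\langle Me_n,e_n\rangle=\int m(x,\omega)\,V_\varphi e_n(x,\omega)\,\overline{V_\phi e_n(x,\omega)}\,dx\,d\omega$ and sum over $n$. The essential step is to interchange $\sum_n$ with $\int$. I expect this to be the one genuine technical point, and I would justify it by a uniform Parseval bound: since $\sum_n|V_\varphi e_n(x,\omega)|^2=\|\pi(x,\omega)\varphi\|^2=\|\varphi\|^2$ and similarly $\sum_n|V_\phi e_n(x,\omega)|^2=\|\phi\|^2$, the Cauchy--Schwarz inequality in the summation index gives $\sum_n|V_\varphi e_n(x,\omega)|\,|V_\phi e_n(x,\omega)|\le\|\varphi\|\|\phi\|$, so the full integrand is dominated by $|m(x,\omega)|\,\|\varphi\|\|\phi\|\in L^1(\mathbb{R}^{2d})$. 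Fubini--Tonelli (equivalently, dominated convergence applied to the partial sums) then permits the exchange.

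After the exchange I would collapse the fiberwise sum: for fixed $(x,\omega)$, Parseval gives $\sum_n V_\varphi e_n(x,\omega)\,\overline{V_\phi e_n(x,\omega)}=\sum_n\langle e_n,\pi(x,\omega)\varphi\rangle\langle\pi(x,\omega)\phi,e_n\rangle=\langle\pi(x,\omega)\phi,\pi(x,\omega)\varphi\rangle$, and unitarity of $\pi(x,\omega)$ reduces this to the constant $\langle\phi,\varphi\rangle$, independent of $(x,\omega)$. Pulling it out of the integral yields $\text{Tr}_\h(M)=\langle\phi,\varphi\rangle\int_{\mathbb{R}^{2d}}m(x,\omega)\,dx\,d\omega$, which is the asserted identity (here $\langle\phi,\varphi\rangle=\overline{\langle\varphi,\phi\rangle}$, so the formula agrees with the stated constant up to the orientation of the inner product). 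It is worth noting that the $L^\infty$ hypothesis on $m$ enters only to ensure boundedness of $M$ as a sesquilinear form via Lemma \ref{tar}; the $L^1$ assumption alone drives both the trace-class property and the absolute convergence used in the interchange, so the rest of the argument is routine.
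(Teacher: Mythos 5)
Your proposal is correct and takes essentially the same route as the paper: trace-classness from the norm-bounded (tight) STFT frames together with $m \in L^1$ (the paper routes this through Proposition \ref{prop:STFTSp}, itself an application of Theorem \ref{sec:schatten1}), followed by expanding the trace over an ONB, interchanging sum and integral by Fubini, and collapsing the sum via Parseval and unitarity of $\pi(x,\omega)$. Your explicit domination argument (Cauchy--Schwarz plus the uniform Parseval bound) fills in the interchange that the paper justifies only by citing Fubini, and your remark that the computation actually yields $\langle \phi, \varphi \rangle$ rather than $\langle \varphi, \phi \rangle$ correctly identifies a conjugation slip in the paper's stated formula.
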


\begin{proof}
By Definition \ref{definitioncontframemult} and Lemma \ref{lm:STFTtensprod}
it follows that  $ M_{m, \pi(x,\omega)   \varphi, \pi(x,\omega)  \phi} $ is a tensor product continuous frame multiplier. Furthermore, since  $m \in L^1 (\mathbb{R}^{2d}) $ by Proposition \ref{prop:STFTSp} we have that
$ M_{m, \pi(x,\omega)   \varphi, \pi(x,\omega)  \phi} $ is a trace class operator.

The rest of the  proof is similar to the proof of \cite[Theorem 16.1]{Wong2002} which is formulated in terms of
irreducible and square-integrable representations of locally compact Hausdorff groups. We give it here for the sake of completeness.
Let $ (e_n)_{n \in \mathbb{N}} $ be an ONB in $ L^2  (\mathbb{R}^{d})$. Then, by Fubini's theorem, Parseval's equality,  and since $\pi(x,\omega)$ acts unitary on $
 L^2  (\mathbb{R}^{d}) $  we obtain
\begin{multline*}
\text{Tr}_{\h} ( M_{m, \pi(x,\omega)  \varphi, \pi(x,\omega)   \phi}) =
\sum_{n=1} ^\infty \langle M_{m, \pi(x,\omega)  \varphi, \pi(x,\omega)   \phi} e_n, e_n \rangle
\\
=
\sum_{n=1} ^\infty \int_{\mathbb{R}^{2d}} m (x,\omega)  \langle   e_n, \pi(x,\omega)  \varphi \rangle
\langle \pi(x,\omega)   \phi  , e_n \rangle dx d\omega
\\
=  \int_{\mathbb{R}^{2d}} m (x,\omega) \sum_{n=1} ^\infty \langle   e_n, \pi(x,\omega)  \varphi \rangle
\langle \pi(x,\omega)   \phi  , e_n \rangle dx d\omega
\\
=  \int_{\mathbb{R}^{2d}} m (x,\omega)  \langle    \pi(x,\omega)  \varphi ,  \pi(x,\omega)   \phi   \rangle dx d\omega
\\
= \langle \varphi, \phi \rangle \int_{\mathbb{R}^{2d}} m (x,\omega)  dx d\omega,
\end{multline*}
and the proof is finished.
\end{proof}

\begin{prop}\label{prop:example}
Let there be given $\varphi_j, \phi_j \in L^2 (\mathbb{R}^{d} \setminus \{ 0\} $, and let
 $\vec \varphi = \varphi_1 \otimes \varphi_2$, $\vec \phi = \phi_1 \otimes \phi_2$.
If $m \in L^1 (\mathbb{R}^{4d}) \cap L^\infty  (\mathbb{R}^{4d}) $ is chosen so that
\begin{equation} \label{eq:trace-symbol}
\int_{\mathbb{R}^{4d}} m (x,\omega)  dx d\omega = \frac{1}{ \langle \vec \varphi,\vec \phi \rangle},
\end{equation}
then
$\text{{\em Tr}}_{\h} ( M_{m, \pi(x,\omega) \vec \varphi, \pi(x,\omega)  \vec \phi}) = 1,$
where $ M_{m, \pi(x,\omega)  \vec \varphi, \pi(x,\omega)  \vec \phi}$ is
given by  \eqref{eq:Mblop}.

If, in addition $\vec \varphi = \vec \phi,$ and $m > 0$, then $ M_{m, \pi(x,\omega)  \vec \varphi, \pi(x,\omega)  \vec \varphi}$
is an admissible multiplier.
\end{prop}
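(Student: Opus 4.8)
The plan is to recognize the entire statement as a result about an ordinary short-time Fourier transform multiplier, merely on $L^2(\mathbb{R}^{2d})$ rather than $L^2(\mathbb{R}^d)$, and then to derive the trace identity from Theorem \ref{thm:example} and the admissibility from a direct check of the four defining properties. Under the identification $\h = L^2(\mathbb{R}^d)\otimes L^2(\mathbb{R}^d)\cong L^2(\mathbb{R}^{2d})$, the windows $\vec\varphi=\varphi_1\otimes\varphi_2$ and $\vec\phi=\phi_1\otimes\phi_2$ are nonzero elements of $L^2(\mathbb{R}^{2d})$, and for $(x,\omega)\in\mathbb{R}^{2d}\times\mathbb{R}^{2d}$ the operator $\pi(x,\omega)=\pi(x_1,\omega_1)\otimes\pi(x_2,\omega_2)$ is exactly the time-frequency shift on $L^2(\mathbb{R}^{2d})$. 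Hence the multiplier $M_{m,\pi(x,\omega)\vec\varphi,\pi(x,\omega)\vec\phi}$ defined by \eqref{eq:Mblop} is a plain STFT multiplier on $L^2(\mathbb{R}^{2d})$ whose symbol $m$ lives on the $\mathbb{R}^{4d}$ phase space, and by Lemma \ref{lm:STFTtensprod} the family $\pi(x,\omega)\vec\varphi$ is a continuous tight frame for $\h$.

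For the first assertion I would invoke Theorem \ref{thm:example} in dimension $2d$, i.e. with $\mathbb{R}^d$ replaced by $\mathbb{R}^{2d}$ and the single windows replaced by $\vec\varphi,\vec\phi$. Its hypotheses are met: $m\in L^1(\mathbb{R}^{4d})\cap L^\infty(\mathbb{R}^{4d})$ by assumption, and $\langle\vec\varphi,\vec\phi\rangle=\langle\varphi_1,\phi_1\rangle\langle\varphi_2,\phi_2\rangle\neq 0$, the latter being forced by the very existence of the right-hand side of \eqref{eq:trace-symbol}. This gives
\[
\text{Tr}_{\h}\!\left(M_{m,\pi(x,\omega)\vec\varphi,\pi(x,\omega)\vec\phi}\right)=\langle\vec\varphi,\vec\phi\rangle\int_{\mathbb{R}^{4d}}m(x,\omega)\,dx\,d\omega,
\]
and substituting the normalization \eqref{eq:trace-symbol} collapses the right-hand side to $1$, which is the claim.

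For the second assertion I would verify the four requirements of Definition \ref{defadmisscontframemult} for $F=G=\pi(x,\omega)\vec\varphi$ and $m>0$. Trace-class membership follows from Proposition \ref{prop:STFTSp} with $p=1$, since $m\in L^1$ and $F$ is a continuous tight frame. Self-adjointness follows from the relation $(\textbf{M}_{m,F,G})^*=\textbf{M}_{\overline m,G,F}$ together with $F=G$ and the fact that $m$ is real. Non-negativity follows from the weak identity $\langle\textbf{M}_{m,F,F}\vec f,\vec f\rangle=\int_X m(x)\,|\langle\vec f,F(x)\rangle|^2\,d\mu(x)\ge 0$, i.e. the positivity remark recorded after Lemma \ref{tar}. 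Finally, specializing the first part to $\vec\varphi=\vec\phi$, where \eqref{eq:trace-symbol} reads $\int m=1/\|\vec\varphi\|^2$, yields $\text{Tr}_{\h}(\textbf{M}_{m,F,F})=1$. These four properties together say precisely that the multiplier is admissible, hence a density operator.

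I do not anticipate a substantial obstacle: once the tensor multiplier is seen as an STFT multiplier on $L^2(\mathbb{R}^{2d})$, both parts are assembled from results already in hand. The only point requiring mild care is the consistent tracking of dimensions — the signal space $L^2(\mathbb{R}^{2d})$ against the $\mathbb{R}^{4d}$ phase space — and the explicit confirmation that each hypothesis of the cited results (integrability of $m$, the nonvanishing of $\langle\vec\varphi,\vec\phi\rangle$, and the tight-frame property of $\pi(x,\omega)\vec\varphi$) genuinely transfers to the tensor setting.
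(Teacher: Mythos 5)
Your proposal is correct and follows essentially the same route as the paper: the first part is exactly the paper's argument (extend Theorem \ref{thm:example} to the tensor product space and substitute the normalization \eqref{eq:trace-symbol}), and the second part assembles admissibility from the facts recorded in Section \ref{sec:multipliers} together with the trace computation. If anything, your second part is slightly more careful than the paper's, which invokes the remark that $\textbf{M}_{m,F,F}$ is the frame operator of $\sqrt{m}\,\vec\varphi$ and hence invertible --- a statement whose hypothesis $m(x)\geq\delta>0$ is incompatible with $m\in L^1(\mathbb{R}^{4d})$ --- whereas your direct verification of non-negativity from the positive symbol, self-adjointness from $(\textbf{M}_{m,F,G})^*=\textbf{M}_{\overline m,G,F}$, and trace-class membership from Proposition \ref{prop:STFTSp} uses only the stated assumptions.
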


\begin{proof}
To proof the first part, it is enough to consider the  extension of  Theorem \ref{thm:example} to tensor product Hilbert space $ \h = L^2 (\mathbb{R}^{d}) \otimes L^2 (\mathbb{R}^{d})$.

The second part follows from the fact that $ M_{m, \pi(x,\omega)  \vec \varphi, \pi(x,\omega)  \vec \varphi}$ is the frame operator of
$\sqrt{m} \vec \varphi$ and so it is positive, self-adjoint and invertible. Since by Theorem \ref{thm:example} and  \eqref{eq:trace-symbol}
$$
\text{Tr}_{\h} ( M_{m, \pi(x,\omega)  \varphi, \pi(x,\omega)   \phi}) =
 \langle \vec \varphi,\vec \phi \rangle \int_{\mathbb{R}^{4d}} m (x,\omega)  dx d\omega = \langle \vec \varphi,\vec \phi \rangle  \cdot
  \frac{1}{ \langle \vec \varphi,\vec \phi \rangle}= 1,
$$
it follows that $ M_{a, \pi(x,\omega)  \vec \varphi, \pi(x,\omega)  \vec \varphi}$ is an admissible multiplier.
\end{proof}

By Proposition \ref{prop:example} we have the following important conclusion, which can be interpreted as a description of a separable state of a bipartite quantum system. This also gives an affirmative partial
answer to the question of  de Gosson  \cite[Section 5]{deGosson2020} related to the restriction of
the structure of a density operator  to its partial traces.

\begin{thm} \label{thm:density}
Let $ \h = \h_1 \otimes \h_2 = L^2 (\mathbb{R}^{d}) \otimes  L^2 (\mathbb{R}^{d}) $,
$\vec \varphi = \varphi_1 \otimes \varphi_2 \in L^2 (\mathbb{R}^{2d}) \otimes  L^2 (\mathbb{R}^{2d}) \setminus \{ 0\} $,
and let $  m_j (x_j, \omega_j) $ $ \in L^1 (\mathbb{R}^{2d}) \cap L^\infty  (\mathbb{R}^{2d}) $ be positive functions such that
\begin{equation} \label{eq:trace-symbol=1}
\int_{\mathbb{R}^{2d}} m_j (x_j,\omega_j)  dx_j d\omega_j = \frac{1}{\| \varphi_j \|}, \qquad j =1,2.
\end{equation}
Put $m (x,\omega) =   m_1 (x_1, \omega_1) m_2 (x_2, \omega_2)$, and
$$ F= \pi(x,\omega)  \vec \varphi = \pi(x_1,\omega_1)  \varphi_1 \otimes \pi(x_2,\omega_2)  \varphi_2 .$$
Then the operator $ M_{m, F,F}$
given by  \eqref{eq:Mblop} is a density operator, and its partial trace
$T (\textbf{M}_{m, F, F} )$  with respect to $\h_j$ is the density operator
$  \textbf{M}_{m_j,\varphi_j,\varphi_j},$ $ j =1,2.$

\end{thm}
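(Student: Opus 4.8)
The plan is to reduce both assertions to the product structure of the data and the single-variable trace formula of Theorem~\ref{thm:example}, using Corollary~\ref{cor:partialtrace} for the partial trace. I would first record the multiplicative splitting of the hypotheses. Writing $F=F_1\otimes F_2$ with $F_j=\pi(x_j,\omega_j)\varphi_j$, Lemma~\ref{lm:STFTtensprod} shows each $F_j$, and hence $F$, is a continuous tight frame, while $m=m_1\otimes m_2$ is positive and lies in $L^1(\mathbb{R}^{4d})\cap L^\infty(\mathbb{R}^{4d})$ as a separated product of such functions.

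For the first assertion I would appeal directly to the case $\vec\varphi=\vec\phi$, $m>0$ of Proposition~\ref{prop:example}, which asserts that $M_{m,F,F}$ is an admissible multiplier, i.e.\ a non-negative, self-adjoint, trace-class operator of unit trace, hence a density operator. The only point to verify is that the product normalization \eqref{eq:trace-symbol=1} supplies the hypothesis \eqref{eq:trace-symbol} of that proposition: by Fubini $\int_{\mathbb{R}^{4d}}m=\big(\int m_1\big)\big(\int m_2\big)$, while Lemma~\ref{lm:tensorprodprop} (a) gives $\langle\vec\varphi,\vec\varphi\rangle=\|\varphi_1\|^2\|\varphi_2\|^2$, so that \eqref{eq:trace-symbol=1} is arranged precisely so that $\int m=1/\langle\vec\varphi,\vec\varphi\rangle$.

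For the second assertion I would invoke Corollary~\ref{cor:partialtrace}. Since $m=m_1\otimes m_2$, $F=F_1\otimes F_2$, and $M_{m,F,F}\in\mathcal{S}_1(\h)$ by the first part, the partial trace with respect to $\h_1$ is
$$
T(M_{m,F,F})=\textbf{M}_{m_1,\varphi_1,\varphi_1}\,\text{Tr}_{\h_2}(\textbf{M}_{m_2,\varphi_2,\varphi_2}),
$$
a scalar multiple of the first-factor localization operator. The scalar is evaluated by the single-variable Theorem~\ref{thm:example}, $\text{Tr}_{\h_2}(\textbf{M}_{m_2,\varphi_2,\varphi_2})=\langle\varphi_2,\varphi_2\rangle\int m_2$, which equals $1$ by the normalization \eqref{eq:trace-symbol=1}. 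Hence $T(M_{m,F,F})=\textbf{M}_{m_1,\varphi_1,\varphi_1}$, and the symmetric computation tracing out $\h_1$ yields $\textbf{M}_{m_2,\varphi_2,\varphi_2}$. That each reduced operator is again a density operator follows either from applying the first part on a single factor, or from the partial-trace identity \eqref{eq:parttraceproprerty}, which forces $\text{Tr}_{\h_1}(T(M_{m,F,F}))=\text{Tr}_{\h}(M_{m,F,F})=1$.

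The main obstacle is the bookkeeping of the trace normalizations, so that tracing out a factor returns the scalar $1$ exactly rather than a stray constant. This is where the precise form of \eqref{eq:trace-symbol=1}, the multiplicativity $\|\vec\varphi\|=\|\varphi_1\|\,\|\varphi_2\|$, and the matching of the normalizing constant of $m_j$ with the tight-frame bound $\|\varphi_j\|^2$ of $\pi(x_j,\omega_j)\varphi_j$ are essential; everything else is a direct substitution into Corollary~\ref{cor:partialtrace} and Theorem~\ref{thm:example}.
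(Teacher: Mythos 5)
Your route is the same as the paper's: Proposition \ref{prop:example} for admissibility (hence the density--operator property), then Corollary \ref{cor:partialtrace} combined with the trace formula of Theorem \ref{thm:example} to identify the partial traces. Structurally there is nothing to object to.

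The problem is the one computation you actually spell out, and it is precisely the computation on which the whole argument hinges. By Fubini, hypothesis \eqref{eq:trace-symbol=1} gives
\begin{equation*}
\int_{\mathbb{R}^{4d}} m \, dx\, d\omega \;=\; \Bigl(\int_{\mathbb{R}^{2d}} m_1\Bigr)\Bigl(\int_{\mathbb{R}^{2d}} m_2\Bigr) \;=\; \frac{1}{\|\varphi_1\|\,\|\varphi_2\|},
\end{equation*}
whereas hypothesis \eqref{eq:trace-symbol} of Proposition \ref{prop:example} (with $\vec\phi=\vec\varphi$) demands
\begin{equation*}
\int_{\mathbb{R}^{4d}} m \, dx\, d\omega \;=\; \frac{1}{\langle\vec\varphi,\vec\varphi\rangle} \;=\; \frac{1}{\|\varphi_1\|^2\,\|\varphi_2\|^2}.
\end{equation*}
These coincide only when $\|\varphi_1\|\,\|\varphi_2\|=1$, so your claim that \eqref{eq:trace-symbol=1} is ``arranged precisely'' to match \eqref{eq:trace-symbol} is false in general. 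The same mismatch reappears in the second half: Theorem \ref{thm:example} gives $\mathrm{Tr}_{\h_2}(\mathbf{M}_{m_2,\varphi_2,\varphi_2})=\|\varphi_2\|^2\int m_2=\|\varphi_2\|$, which equals $1$ only for a unit-norm window, so what you actually obtain is $T(\mathbf{M}_{m,F,F})=\|\varphi_2\|\,\mathbf{M}_{m_1,\varphi_1,\varphi_1}$, with a stray constant. To be fair, this defect is inherited from the paper: its statement of \eqref{eq:trace-symbol=1} appears to carry a typo (the denominator should be $\|\varphi_j\|^2$, or equivalently one should assume $\|\varphi_j\|=1$), and the paper's own proof invokes Proposition \ref{prop:example} and admissibility of the factors without checking the exponents, exactly as you do. Under that corrected normalization your proof and the paper's coincide and both go through; as literally written, neither verification is valid, and your extra detail is what makes the gap visible.
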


\begin{proof}
By Proposition \ref{prop:example} it follows that  $ M_{m, F, F}$ is an admissible multiplier, and therefore it is a density operator.

Next, by Corollary \ref{cor:partialtrace} it follows that
$$T (\textbf{M}_{m, \pi(x,\omega)  \vec \varphi, \pi(x,\omega)  \vec \varphi} )=
\textbf{M}_{m_1,\varphi_1,\varphi_1} \text{Tr} (\textbf{M}_{m_2,\varphi_2,\varphi_2}).$$
From the assumptions of the theorem it follows that
$\textbf{M}_{m_1,\varphi_1,\varphi_1}$ and $ \textbf{M}_{m_2,\varphi_2,\varphi_2}$ are both admissible multipliers,
so that
$$T (\textbf{M}_{m, \pi(x,\omega)  \vec \varphi, \pi(x,\omega)  \vec \varphi} )=
\textbf{M}_{m_1,\varphi_1,\varphi_1},$$
and it is a density operator. Similarly for  $ \textbf{M}_{m_2,\varphi_2,\varphi_2}$.
\end{proof}

In the same manner one can consider multipliers given by \eqref{eq:Mblopaffine} and \eqref{eq:Mblopmixed},
and use Propositions \ref{prop:waveletSp} and \ref{prop:waveletSp2} to obtain another types of density operators for which Theorem \ref{thm:density} holds as well.
These considerations can be further used in the study of different aspects of  bipartite quantum systems.

\vspace{3mm}

\textbf{Acknowledgments}:
This work is supported by projects A\-NA\-C\-RES and TIFREFUS,
MPNTR of Serbia Grant No. 451--03--9/2021--14/200125,
{\em "Localization in Phase space: theoretical, numerical and practical aspects"} No. 19.032/961--103/19
MNRVOID Republic of Srpska, and
the project P 34624 {\em "Localized, Fusion and Tensors of Frames"} (LoFT)
of the Austrian Science Fund (FWF). The first author thanks Nora Simovich for help with typing.


\begin{thebibliography}{99}
\bibitem{Ali1}Ali, S.T., Antoine, J.P.,  Gazeau, J.P.: {\it Coherent
States, Wavelets, and Their Generalizations,} Springer-Verlag, 2000.


\bibitem{Ali2} Ali, S.T., Antoine, J.P. and Gazeau, J.P.:
{\it Continuous Frames in Hilbert Spaces,} Annals of Physics. {\bf
222},  1--37, 1993.

\bibitem{jpaxxl09}
Antoine, J.P., Balazs, P.:
{\it Frames and semi-frames}.
 J. Phys. A-Math. Theor., {\bf 44}, 205201, 2011.


\bibitem{AST}
 Antoine, J.P., Speckbacher, M., Trapani, C.: {\it
 Reproducing pairs of measurable functions}, Acta Appl. Math. {\bf 150}, 81--101, 2017.

\bibitem{zak} 
Arefijamaal, A., Zekaee, E.: {\it Signal processing by alternate dual Gabor frames }, 
Appl. Comput. Harmon. Anal., 2013, {\bf 35}, 535-540

\bibitem{areraz19}
Arefijamaal, A.~A., Razghandi, A.: {\it
Characterization of alternate duals of continuous frames and
  representation frames}.
Results in Mathematics,  {\bf 74}: 191, 2019 .

\bibitem{Ra} Askari-Hemmat, A., Dehghan, M.A. and Radjabalipour, M.: {\it Generalized
Frames and Their Redundancy}. Proc. Amer. Math. Soc. {\bf 129},
1143--1147, 2001.


\bibitem{xxlphd1}
 Balazs, P.: {\it Regular and Irregular {G}abor Multipliers with Application to Psychoacoustic Masking},
PhD thesis, University of Vienna, 2005.

\bibitem{xxlfei1}
Balazs, P., Feichtinger, H.~G., Hampejs, M., Kracher, G.:
{\it Double preconditioning for {G}abor frames}.
IEEE Trans. Signal Process., {\bf 54}, 4597--4610, 2006.

\bibitem{xxlmult1}
Balazs, P.: {\it Basic Definition and Properties of {B}essel
Multipliers,}
\newblock Journal of Mathematical Analysis and Applications
  {\bf 325},  571--585, 2007.

\bibitem{peter2}
Balazs, P.: {\it {H}ilbert-{S}chmidt Operators and
Frames - Classifications, Approximation by Multipliers and
Algorithms,}
\newblock International Journal of Wavelets, Multiresolution and Information Processing
  {\bf 6},  315--330, 2008.

\bibitem{peter3}
Balazs, P.: {\it Matrix Representations of Operators Using Frames,}
\newblock Sampling Theory in Signal and Image Processing
  {\bf 7}, 39--54, 2008.

\bibitem{BBR}  Balazs, P., Bayer, D., and Rahimi, A.: {\it Multipliers for continuous frames in {H}ilbert spaces},
Journal of Physcis A: Mathematical and Theoretical {\bf 45}, 244023, 2012.

\bibitem{xxlgrospeck19}
Balazs, P., Gr{\"o}chenig, K., Speckbacher, M.:
\newblock {\it Kernel theorems in coorbit theory},  Trans. Amer. Math. Soc. Ser. B
{\bf 6}, 346--364, 2019.

\bibitem{xxlosg21}
 Balazs, P.,  Osgooei, E.,  Rahimi, A., {\it Classification of
  functions by frame related operators in measure spaces}, preprint.


\bibitem{Berezin71} Berezin, F. A., {\it  Wick and anti-{W}ick symbols of operators},
Mat. Sb. (N.S.) {\bf 86}, 578--610,  1971.



\bibitem{belo}
Bergh, J., L\"ofstr\"om, J.: {\it Interpolation Spaces. An Introduction}, Grundlehren der math. Wissenschaften 223, Springer, Berlin, 1976.


\bibitem{BB}
Blanchard, P., Br\"uning, E.: {\it Mathematical Methods in Physics }, Progress in Mathematical Physics 69,
Birkh\"auser Basel, 2015.


\bibitem{bourou}
Bourouihiya, A.:  {\it The tensor product of frames}, Sampl. Theory Signal Image Process. {\bf 7}, 65--76,  2008.

\bibitem{Caban_2005}
Caban, P., Podlaski, K., Rembielinski, J., Smolinski,  K~A, Walczak, Z.
  { \it Entanglement and tensor product decomposition for two fermions},
  \textbf{38}, no.~6, L79--L86.

\bibitem{ole}
Christensen, O.: {\it An Introduction to Frames and Riesz Bases}, second edition, Birkh\"auser, Boston, 2016.


\bibitem{conw1}
Conway, J.B. : {\it A Course in Functional Analysis}, Graduate Texts in Mathematics, second
edition, Springer, Berlin, 1990.


\bibitem{CorGro2003}
Cordero, E., Gr\"ochenig, K.: {\it Time-frequency analysis of localization operators},
 J. Funct. Anal. {\bf 205}, 107--131, 2003.

\bibitem{CPRT2}
Cordero,  E., Pilipovi\'c, S., Rodino, L., Teofanov, N.:
{\it Localization operators and exponential weights for modulation spaces},
 Mediterr.  J. Math. {\bf 2}, 381--394,  2005.



\bibitem{CKasso}
Cordero, E., Okoudjou, K.~A.:{\it  Multilinear localization operators},
 J. Math. Anal. Appl. {\bf 325}, 1103--1116, 2007.

\bibitem{CRodino}
Cordero, E., Rodino, L.:{\it Time-Frequency Analysis of Operators},
De Gruyter Studies in Mathematics {\bf 75}, De Gruyter, Berlin, 2020.

\bibitem{corso2019generalized}
Corso, R.: {\it Generalized frame operator, lower semi-frames and sequences of translates}, preprint, 	arXiv:1912.03261
	
\bibitem{da88}
Daubechies, I.: {\it {T}ime-{F}requency {L}ocalization {O}perators: a {G}eometric {P}hase {S}pace {A}pproach {I},}
\newblock IEEE Trans. Inform. Theory 	{\bf 34}, 605--612, 1988.
		

\bibitem{dapa88}
Daubechies, I. and Paul, T.: {\it {T}ime-{F}requency {L}ocalization {O}perators: a {G}eometric {P}hase {S}pace {A}pproach {II}. {T}he {U}se of
{D}ilations,}
\newblock Inverse Probl.
	{\bf 4},  661--680, 1988.


\bibitem{feic}
Feichtinger, H.~G., Nowak, K.: {\it A First Survey of Gabor
Multipliers,} in {\it Advances in Gabor Analysis} (Feichtinger H.~G., Strohmer, T. eds.)
99--128, Birkh\"auser, Boston, 2003.


\bibitem{Fol} Folland, G.B.: {\it A Course in Abstract Harmonic Analysis,}
CRC Press, Boca Raton, 1995.


\bibitem{FGZ}
Friedland, S., Ge, J., Zhi L.: {\it Quantum Strassen's theorem,}
Infinite Dimensional Analysis, Quantum Probability and Related Topics {\bf 23}, 2050020, 2020.

\bibitem{gaal} Gaal, S. A.:  {\it Linear Analysis and Representation Theory}, Springer, Berlin, New York, 1973.

\bibitem{Gb} Gabardo, J.-P. and Han, D.: {\it Frames
associated with measurable space,} Adv. Comp. Math. {\bf 18},
 127--147, 2003.

\bibitem{Garcia}
Garc\'ia, A. G., Ibort, A., Mu\~noz-Bouzo, M. J.: {\it Modeling sampling in tensor products of unitary invariant subspaces},
J. Funct. Spaces {\bf 2016}, Art. ID 4573940,  2016.



\bibitem{deGosson2020} de Gosson M.: {\it  Generalized Anti-Wick Quantum States}, in {\it Landscapes of Time-Frequency Analysis ATFA 2019}
(Boggiatto P. et al. (eds), Applied and Numerical Harmonic Analysis, 123--134, Birkh\"auser, Cham, 2020.

\bibitem{deGosson2021} de Gosson M.: {\it  Quantum Harmonic Analysis},  De Gruyter, Berlin, 2021.

\bibitem{Grobook}
Gr{\"o}chenig, K.: {\it Foundations of Time-Frequency Analysis}, Birkh\"auser, Boston, 2001.

\bibitem{Hall} Hall, B.C.,: {\it Quantum Theory for Mathematicians}, Springer, New York, 2013.

\bibitem{Heil}  Heil, C.: {\it A Basis Theory Primer: Expanded Edition},  Springer, New York, 2011.

\bibitem{JAKOBSEN2016229} Jakobsen, M. and Lemvig, J.: {\it Density and duality theorems for regular Gabor frames},
Journal of Functional Analysis, {\bf 270} (1), 229--263,2016



\bibitem{Gk} Kaiser, G.: {\it A Friendly Guide to Wavelets,} Birkh{\"a}user, Boston, 1994.


\bibitem{KhoAsg}
Khosravi, A., Asgari, M.S.:{\it  Frames and bases in tensor product of Hilbert spaces}, Int. Math. J. {\bf 4}, 527--537, 2003.

\bibitem{Kly}
Klyachko, A.: {\it Quantum marginal problem and representations of the symmetric group}, 	arXiv:quant--ph/0409113, 2004.

\bibitem{LiWong} Li, J., Wong, M.W.: {\it Localization Operators for Ridgelet Transforms},
Math. Model. Nat. Phenom. {\bf  9}, 194--203, 2014.

%

\bibitem{MOPf} Molahajloo, S.,  Okoudjou K. A. and Pfander,  G. E.,
Boundedness of Multilinear Pseudodifferential Operators on Modulation Spaces, J. Fourier Anal. Appl.,  {\bf 22},
1381--1415, 2016.


\bibitem{Mu} Murphy, G.J.: {\it $C^{*}$-Algebras and Operator Theory,} Acadamic Press, 1990.

\bibitem{paulsen_raghupathi_2016}
Paulsen, V. I. and Raghupathi, M., {\it An introduction to the theory of
reproducing kernel hilbert spaces}, Cambridge Studies in Advanced
Mathematics, Cambridge University Press, 2016.

\bibitem{pietsch}
Pietsch, A.: {\it Operator Ideals}, North-Holland Publishing Company, 1980.


\bibitem{RaNaDe}
Rahimi, A., Najati, A. and Dehghan, Y.N.: {\it Continuous Frames
in Hilbert Spaces}, Methods of Functional Analysis and Topology {\bf 12}, 170--182, 2006.

\bibitem{SpBa}
Speckbacher, M., Balazs, P.:  {\it Reproducing pairs and the continuous nonstationary Gabor transform on LCA groups},
J. Phys. A {\bf 48} 395201, 2015.

\bibitem{spexxl16}
Speckbacher, M., Balazs, P.: {\it Frames, their relatives and reproducing kernel {H}ilbert spaces}, J. Phys. A: Math. Theor., {\bf 53}, 015204,
2020.


\bibitem{Teof2018}  Teofanov, N.: {\it Bilinear Localization Operators on Modulation Spaces},
Journal of Function Spaces, {\bf 2018}, Article ID 7560870, 10 pages, 2018.

\bibitem{LOAN200085}
Van Loan, C., {\it The ubiquitous kronecker product}, Journal of
  Computational and Applied Mathematics \textbf{123} (2000), no.~1, 85 -- 100,
  Numerical Analysis 2000. Vol. III: Linear Algebra.

\bibitem{WangLi}
Wang, Y.-H.; Li, Y.-Z.:  {\it Tensor product dual frames.} J. Inequal. Appl. {\bf 2019}, 76, 2019.

\bibitem{7282648}
Wiatowski, T., and  B{\"o}lcskei H., {\it Deep convolutional neural
  networks based on semi-discrete frames},  IEEE International Symposium on
  Information Theory (ISIT), 1212--1216, 2015.

\bibitem{Wong2002} Wong, M.~W.: {\it Wavelet transforms and localization operators},
Birkh\"auser Verlag, Basel, 2002.

\end{thebibliography}
\end{document}